\documentclass[letterpaper, 11pt]{amsart}
\usepackage{amssymb,amsmath,amsfonts,amsthm}
\usepackage{graphicx}
\usepackage[normalem]{ulem}
\graphicspath{ {./images/} }
\usepackage{setspace}
%\allowdisplaybreaks

\usepackage{mathrsfs}  
\usepackage{psfrag}
\usepackage{mathtools}
\usepackage{color}
\usepackage{todonotes}
\usepackage{enumitem}
%%%%

\definecolor{Red}{rgb}{1,0,0}
\definecolor{Blue}{rgb}{0,0,1}
\definecolor{Green}{rgb}{0,1,0}
\definecolor{magenta}{rgb}{1,0,.6}
%\definecolor{lightblue}{rgb}{0,.5,1}
%\definecolor{lightpurple}{rgb}{.6,.4,1}
\definecolor{gold}{rgb}{.6,.5,0}
\definecolor{orange}{rgb}{1,0.4,0}
%\definecolor{hotpink}{rgb}{1,0,0.5}
%\definecolor{newcolor2}{rgb}{.5,.3,.5}
%\definecolor{newcolor}{rgb}{0,.3,1}
%\definecolor{newcolor3}{rgb}{1,0,.35}
\definecolor{darkgreen1}{rgb}{0, .35, 0}
\definecolor{darkgreen}{rgb}{0, .6, 0}
\definecolor{darkred}{rgb}{.75,0,0}

\usepackage{hyperref}
\hypersetup{colorlinks=true, citecolor=green, linkcolor=blue, hypertexnames=false}

\usepackage{chngcntr}
\counterwithin{equation}{section}

\theoremstyle{plain}
\newtheorem{main}{Theorem}

\newtheorem{theorem}{Theorem}[section]
\newtheorem{lemma}[theorem]{Lemma}
\newtheorem{proposition}[theorem]{Proposition}

\theoremstyle{remark}
\newtheorem{remark}[theorem]{Remark}
\newtheorem{definition}{Definition}

\newcommand\numberthis{\addtocounter{equation}{1}\tag{\theequation}}

               \def\cal{\mathcal}
           \def\ea{\end{array}}
          \def\ec{\end{center}}
     \def\ed{\end{description}}
        \def\ee{\end{equation}}
       \def\eea{\end{eqnarray}}
     \def\eeaa{\end{eqnarray*}}
 \def\et{\end{thebibliography}}

%\textwidth=15. true cm
%\textheight=24. true cm
%\voffset=-2. true cm
%\hoffset = -1 true cm

\def\bM{{\bf{M}}}

\def\U{{\cal U}}

\def\Cl{{\rm Cl}}

\newcommand{\interior}[1]{%
	{\kern0pt#1}^{\mathrm{o}}%
}

\def\supp{\operatorname{supp}}

\def\cG{{\mathcal G}}

\def\cD{{\mathcal D}}
\def\cC{{\mathcal C}}
\def\cO{{\mathcal O}}

\def\cM{{\mathcal M}}
\def\cN{{\mathcal N}}
\def\cP{{\mathcal P}}

\def\cx{{\mathbf  x}}

\def\cS{{\mathcal S}}

\def\vep{\varepsilon}

\def\RR{{\mathbb R}}

\def\NN{{\mathbb N}}

\def\Exp{\operatorname{Exp}}
\def\NE{{\rm NE}}

	\title[An Improved CT-Criterion for Locally Maximal Sets]{An Improved Climenhaga-Thompson Criterion for Locally Maximal Sets}
\date{\today}
\author{Maria Jose Pacifico, Fan Yang, Jiagang Yang and Gongran Yao}

\address{M.J. Pacifico, Instituto de Matem\'atica, Universidade Federal do Rio de Janeiro, C. P. 68.530, CEP 21.945-970,  Rio de Janeiro, RJ, Brazil.}
 \email{pacifico@im.ufrj.br }

\address{F. Yang, Department of Mathematics, Wake Forest University, Winston-Salem, NC, USA.}
\email{yangf@wfu.edu; fizbanyang@gmail.com}

\address{J. Yang, Departamento de Geometria, Instituto de Matem\'atica e Estat\'\i stica, Universidade Federal Fluminense, Niter\'oi, Brazil}
\email{yangjg\@@impa.br}

\address{G. Yao, Department of Mathematics, Wake Forest University, Winston-Salem, NC, USA.}
\email{yaog23@wfu.edu }

\thanks{MJP and JY were partially supported by CAPES-Finance Code 001, CNPq-Projeto Universal No. 404943/2023-3, and CNPq-Brazil No. 307776/2019-0 and 302975/2019-5 respectively. MJP was partially supported by FAPERJ Grant- CNE E-26/202.850/2018(239069). JY was partially supported by NSFC12271538, NSFC11871487, NSFC 12071202 and MATH-AmSud 220029. FY and GY were partially supported by NSF Award Number 2418590.}          

\setcounter{tocdepth}{1}
\begin{document}

\begin{abstract}
We study the existence and uniqueness of equilibrium states for  continuous flows on a compact, locally maximal invariant set under weak, non-uniform versions of specification, expansivity, and the Bowen property, further improving the Climenhaga-Thompson Criterion \cite{CT16, PYY21}. 
\end{abstract}

\maketitle

%\tableofcontents
\section{Introduction}
The Climenhaga-Thompson Criterion (abbrev.\ CT Criterion) \cite{CT16} establishes the existence of a unique equilibrium state when certain weak, non-uniform versions of the specification, expansivity, and the Bowen property hold at some fixed scales. It has been widely used to establish the uniqueness of equilibrium states for geodesic flows (\cite{BCFT, CKW}), diffeomorphisms (\cite{CFT18, CFT19}), as well as certain symbolic systems (\cite{GPR}).  

The authors of \cite{CT16} assume that either (1) specification holds at a fixed scale on a family of orbit segments that is much larger than the ``good core''; or (2) specification holds on the ``good core'' at arbitrarily small scales (in fact (2) implies (1)). More recently,  in \cite{PYY21} we relaxed the specification assumption, asking it to hold only on the ``good core'' at a fixed scale  and achieved the same result. This improvement allowed us to apply the improved Climenhaga-Thompson Criterion to flows with singularities, proving that every sectional-hyperbolic attractor, including the classical Lorenz attractor, must support a unique equilibrium states \cite{PYY23}.

One of the issues that we encountered in \cite{PYY23} is that each sectional-hyperbolic attractor $\Lambda$ is a compact, locally maximal invariant set. As a result, the shadowing orbit given by the specification may not be in $\Lambda$ any more. In \cite{PYY23} we were able to bypass this issue by showing that for sectional-hyperbolic attractors,  it is always possible to select the shadowing orbit from $\Lambda$, exploiting the fact that $\Lambda$ is an attractor and therefore contains the unstable manifold of every periodic orbit in it. 

In an on-going project \cite{PYY25}, we aim to prove that every star vector field supports only finitely many equilibrium states.\footnote{This paragraph is aimed to provide some background information; as a result, we will not provide the precise definitions, and invite interested readers to \cite{PYY23a} and \cite{PYY23}.} This requires us to consider locally maximal invariant sets that are not attractors. In particularly, there are two cases: (1) the so-called multi-singular hyperbolic sets, which are of saddle type; and (2) sectional-hyperbolic sets that are not attractors. For this purpose, below we will make a further improvement for the CT Criterion in \cite{PYY21}, which can be applied to locally maximal invariant sets. For simplicity, we will only prove a version for continuous flows. The same result holds for homeomorphisms following the treatment in \cite[Section 5]{CT16}.
 
\section{Preliminary and Statement of Result}\label{sec2}
Let $(f_t)_{t\in\RR}$ be a continuous flow on a compact metric space $\bM$, and $\Lambda$ be a compact, locally maximal invariant set of $(f_t)_{t\in\RR}$ with an isolating neighborhood $U$; that is,
\begin{equation}\label{e.iso}
\Lambda = \bigcap_{t\in\RR} f_t(\Cl(U))
\end{equation} 
where $\Cl$ denotes the closure of a set.
Note that every neighborhood of $\Lambda$ contained in $U$ is also an isolating neighborhood of $\Lambda$. In particular, we take $U_1$ an open neighborhood of $\Lambda$ satisfying 
$$
\Lambda\subset U_1 \subset \Cl(U_1)\subset U.
$$   
More properties of $U_1$ will be specified later. 

%For simplicity, we shall further require that $(f_t)$ is a continuous flow.%, and let $$ = inf\{L > 0 : d(f_s(x), f_s(y)) \le Ld(x, y), \forall s \in [-1, 1]\}.$$

We consider the following types of orbit segments:
\begin{itemize}
	\item for a set $I\subset \RR$ we write $f_{I}(x) = \{f_sx: s\in I\}$. 
	\item Every element $(x,t)\in \Lambda\times\RR^+$ is identified with the orbit segment $f_{[0,t)}(x) = \{f_sx: s\in [0,t)\}$; these are finite orbit segments that are contained in $\Lambda$. As in \cite{CT16} and \cite{PYY21}, we consider $(x,0)$ as the empty set rather than the singleton $\{x\}$. 
	\item We write $\cO(U)$ for the subset of $U\times \RR$ such that every $(x,t)\in \cO(U)$ satisfies
	$$
	f_sx\in U, \forall s\in [0,t);
	$$
	then we identify $(x,t)\in \cO(U)$ with the orbit segment $f_{[0,t)}(x)\subset U$. In other words, $\cO(U)$ consists of finite orbit segments entirely contained in $U$.
	\item $\cO(U_1)$ is defined similarly. 
\end{itemize}
From the definition, it follows that 
$$
\Lambda\times \RR\subset \cO(U_1) \subset \cO(U).
$$
In this paper, due to our setting of $U$ and $U_1$, we will not consider orbit segments that are not contained in $\cO(U)$.

Next, we consider the pressure on a collection of finite orbit segments. We define the Bowen metric 
$$d_t(x, y) = \sup\{d(f_sx, f_sy): s \in [0, t]\},$$ for given $t \geq 0$ and $x, y \in \bM$;  we also define 
Bowen ball $B_t(x, \delta)$ for $\delta > 0$ as $$B_t(x, \delta) = \{y \in \bM : d_t(x, y) < \delta\}$$ for $t,\delta \geq 0.$  
For an orbit segment collection $\cC$ 
we define 
$$
(\cC)_t = \{x\in\bM: (x,t)\in \cC\}. 
$$
We say a set $E_t \subset  (\cC)_t \subset \bM$ is a $(t,\delta)$-separated set of $\cC$ if every distinct pair $x, y \in E_t$ satisfies $d_t(x, y) > \delta$. 
%we say the set has maximal cardinality if we can't add more point of $\cC$ to $E_t$ to make it still $(t,\delta)$-separated. 

%{ Since $U_1$ is assumed to be compact, we see that $(\cO(U_1))_t$ is compact for every $t$. } 
Let $\phi: U\to \RR$ be a continuous function, and  $\delta>0,\vep>0$ be two positive (small) constants. The {\em (two-scale) partition function} $\lambda(\cC,\phi,\delta,\vep,t)$\footnote{In \cite{CT16, PYY21} the partition function is denoted by $\Lambda$; here we use $\lambda$ to avoid confusion with the invariant set $\Lambda$.} for a collection of finite orbit segments $\cC$ and $t>0$ is defined as 
\begin{equation}\label{e.Lambda}
	\lambda(\cC,\phi,\delta,\vep,t)=\sup\left\{\sum_{x\in E}e^{\Phi_\vep(x,t)}:E\subset (\cC)_t \mbox{ is $(t,\delta)$-separated}\right\},
\end{equation}
where 
$$
\Phi_\vep(x,t) = \sup_{y\in B_t(x,\vep)} \left(\int_0^t \phi(f_s(x))\,ds\right); 
$$
so $\Phi_0(x,t)$ is the usual Birkhoff integral on $(x,t).$
Any $(t,\delta)$-separated set that attains the supremum of $\lambda(\cC, \phi, \delta, \varepsilon, t)$ is called maximizing. Note that $(\cC)_t$ may not be compact in general, and so maximizing separated sets may not exist. 

To simplify notation, below we will frequently drop the potential $\phi$ and write $\lambda(\cC, \delta, \varepsilon, t)$ instead.

%
%Henceforth, we shall always assume that $\delta,\vep$ are taken small enough so that
%\begin{equation}\label{e.smallconstant}
%	B_{\delta}(\Lambda)\subset U_1,\,\, \mbox{ and }\,\, B_{\delta}(U_1)\subset U, 
%\end{equation}
%where for a positive number $r>0$ and a set $A$, $B_r(A)$ is the $r$-neighborhood of $A$:
%$$
%B_r(A) = \bigcup_{x\in A} B_r(x). 
%$$

\begin{remark}
	There are several types of monotonicity of  $\lambda(\cC,\phi,\delta,\vep,t)$: %(1) when $\cC$ is fixed, 	$\lambda(\cC,\phi,\delta,\vep,t)$ is monotonically increasing in $\phi$, 
	(1)  $\lambda(\cC,\phi,\delta,\vep,t)$ is monotonically decreasing as $\vep$ decreases, and (2) monotonically increasing as $\delta$ decreases; (3) when there are two collections of orbit segments $\cC^1 \subset \cC^2,$ we have $\lambda(\cC^1,\phi,\delta,\vep,t) \le \lambda(\cC^2,\phi,\delta,\vep,t).$ %The same monotonicity is true for $P$.
\end{remark}

The pressure of $\phi$ on $\cC$ with scale $\delta,\vep$ is defined as
\begin{equation}\label{e.P1}
	P(\cC,\phi,\delta,\vep) = \limsup_{t\to+\infty}\frac1t \log\lambda(\cC,\phi,\delta,\vep,t).
\end{equation}
The monotonicity of the partition function can be naturally translated to $P$. When $\vep=0$ we will often write $P(\cC,\phi,\delta)$, and let
\begin{equation}\label{e.P2}
	P(\cC,\phi) = \lim_{\delta\to0} P(\cC,\phi,\delta). 
\end{equation}
When $\cC =  \Lambda\times \RR^+$, this coincides with the standard definition of the topological pressure $P(\phi;\Lambda)$ for the restriction of the flow on $\Lambda$.

\subsection{ Obstruction to expansivity and almost expansivity}\label{s.2.2}
As in \cite{CT16,PYY21} we consider the set 
$$
\Gamma_\vep(x)=\{y\in \bM: d(f_tx,f_ty)\le \vep \mbox{ for all }t\in\RR\},
$$
which will be called a (two-sided) infinite Bowen ball at $x$. It is easy to see that $\Gamma_\vep(x)$ is compact for all $x$ and $\vep$.

\begin{definition}\label{d.EXP}
	Given $\vep>0$, the set of expansive points in $\Lambda$ at scale $\vep$ is the set 
	\begin{equation}\label{e.Exp}
		\Exp(\vep;\Lambda) :=  \left\{x\in  \Lambda: \Gamma_\vep(x)\subset f_{[-s,s]}(x) \mbox{ for some }s>0\right\}.
	\end{equation}
	The complement of $\Exp(\vep;\Lambda)$ in $\Lambda$, which we denote by $\NE(\vep; \Lambda)$, is called the set of non-expansive points at scale $\vep$. 
		
	We sat that an $f$-invariant probability measure $\mu$ supported in $\Lambda$ is almost expansive on $\Lambda$ at scale $\vep$, if $\mu(\NE(\vep;\Lambda))=0$. We also say the flow is almost expansive on $\Lambda$ at sale $\vep$ if every $f$-invariant probability measure is almost expansive on $\Lambda$ at scale $\vep$.
\end{definition}
%Almost expansivity can be defined similarly for homeomorphisms (and their invariant measures). 

For the next definition, we write $\cM_f^e(\Lambda)$ for the set of invariant ergodic probability measures whose support is contained in $\Lambda$. 
\begin{definition}\label{d.OP}
	Given a potential $\phi: U\to \RR$, the pressure of obstruction to expansivity on $\Lambda$ at scale $\vep$ is defined as 
	$$
	P_{\exp}^\perp (\phi,\vep;\Lambda) = \sup_{\mu\in\cM_f^e(\Lambda)}\left\{h_\mu(f_1)+\int \phi\,d\mu: \mu(\operatorname{NE}(\vep;\Lambda))=1\right\}.
	$$
\end{definition}
	
Where the flow is almost expansive on $\Lambda$, the supremum is taken over an empty set; following the standard notation that  $\sup\emptyset = -\infty$, we see that if $f_t$ is almost expansive on $\Lambda$ at scale $\vep$ then $P_{\exp}^\perp (\phi,\vep';\Lambda) = -\infty$ for all $\vep'\le\vep$.
	
%It is well known that if the system is expansive (in fact, entropy expansive suffices) at scale $\vep$, then one has  $P(\phi,\vep/2)=P(\phi;\Lambda)$. The following proposition generalizes this fact to systems that are not necessarily (entropy) expansive:
%\begin{proposition}\cite[Proposition 3.7]{CT16}\label{p.stablize}
%	If $P_{\exp}^\perp (\phi,\vep)<P(\phi;\Lambda)$, then $P(\phi,\gamma)=P(\phi;\Lambda)$ for every $\gamma\in (0,\vep/2]$.
%\end{proposition}
	
We conclude this subsection by remarking that if $P_{\exp}^\perp (\phi,\vep;\Lambda)<P(\phi;\Lambda)$, then any equilibrium state $\mu$ supported in $\Lambda$ must be almost expansive at scale $\vep$. 
	
%Similar to the case of topological pressure, we will sometimes write $P_{\exp}^\perp(\phi,\vep, (f_t))$ to emphasize its dependence on the flow $(f_t)$.

\subsection{Decomposition of orbit segments}\label{s.2.3}
%The main observation in~\cite{CT16} is that the uniqueness of equilibrium states can be obtained if the collection of ``bad orbit segments'' has small topological pressure compared to the rest of the system.  For this purpose, they define:
\begin{definition}\cite[Definition 2.3]{CT16}
	A decomposition  $(\cP,\cG,\cS)$ for a collection of finite orbit segments $\cD$ consists of three collections $\cP,\cG,\cS$ and three functions $p,g,s:\cD\to \RR^+$ such that for every $(x,t)\in \cD$, the values $p=p(x,t), g=g(x,t)$ and $s=s(x,t)$ satisfy $t=p+g+s$, and
	\begin{equation}\label{e.decomp}
		(x,p)\in\cP,\hspace{0.5cm} (f_px,g)\in\cG,\hspace{0.5cm} (f_{p+g}x,s)\in\cS.
	\end{equation}
	Given a decomposition $(\cP,\cG,\cS)$ and real number $M\ge0$, we write $\cG^M$ for the set of orbit segments $(x,t)\in \cD$ with $p\le  M$ and $s\le  M$.
\end{definition}

As mentioned earlier, in this article, we must deal with three ``spaces'' of finite orbit segments: $\Lambda\times\RR^+\subset \cO(U_1)\subset \cO(U)$. Suppose there exists\footnote{This notation should not be confused with $(\cC)_t$ defined earlier. Instead we will write $(D_1)_1$ for the set $\{x\in\bM: (x,1)\in D_1\}$.} $\cD_1\subset \cO(U_1)$ with a $(\cP_{1},\cG_1,\cS_1)-$decomposition on (here we use the subscript to highlight the fact that the decomposition is defined for orbit segments in $\cO(U_1)$); then the following properties hold:
\begin{enumerate}
	\item $\cP_1,\cG_1,\cS_1$ are subsets of $\cO(U_1)$; consequently, orbit segments in them are contained in $U_1$;
	\item by restricting the functions $p,g,s$ to $\cD_0:= \cD_1\cap\Lambda\times\RR^+$ ( which {\em a priori} may be empty) and considering $\cP_0 := \cP_1\cap\Lambda\times\RR^+$ (similarly define $\cG_0,\cS_0$), one obtains a  $(\cP_0,\cG_0,\cS_0)-$decomposition of $\cD_0\subset \Lambda\times\RR^+$. 
\end{enumerate}

\subsection{The Bowen property and specification}

Let $\cC$ be a collection of finite orbit segments. Below we will give the definitions of the Bowen property and specification on $\cC.$
\begin{definition}
    We say a potential $\phi$ has the Bowen property on $\cC$ at scale $\varepsilon$ if there exists $K>0$ so that
\begin{equation*}
\sup \left\{\left|\Phi_{0}(x, t)-\Phi_{0}(y, t)\right|:(x, t) \in \cC, y \in B_{t}(x, \varepsilon)\right\} \le K.
\end{equation*}
\end{definition}

$K$ is sometimes called the distortion constant for the Bowen property. Note that if $\phi$ has the Bowen property at scale $\varepsilon$ on $\cG_1$ with distortion constant $K$, then $\phi$ has the Bowen property on any subset of $\cG_1$ at any scale that is less than $\vep$ with the same distortion constant $K$.
Furthermore, $\phi$ has the Bowen property at scale $\varepsilon$ on $(\cG_1)^{M}$ with distortion constant given by $K(M)=K+2 M \operatorname{Var}(\phi, \varepsilon)$ for any $M>0$, where 
$$
\operatorname{Var}(\phi,\vep) = \sup_{d(x,y)<\vep} |\phi(x)-\phi(y)|.
$$

%Recall that $\Lambda\subset U_1\subset U$. 

\begin{definition}\label{d.spec11}	Let $\cO(U_1)$, $\cO(U)$ be defined as before. 
	\begin{enumerate}
		\item Let $\cC_0\subset \Lambda\times\RR^+. $ We say that $\cC_0$ has tail (W)-specification at scale $\delta$ with shadowing orbit contained in $\cO(U_1)$,  if there exist $\tau>0$ and $T_0 > 0$ such that for every $\left\{\left(x_{i}, t_{i}\right)\right\}_{i=1}^{k} \subset \cC_0$ with $t_i\geq T_0 $, there exist a point $y\in U_1$ and a sequence of "gluing times" $\tau_{1}, \ldots, \tau_{k-1} \in \mathbb{R}^{+}$ with $\tau_{i} \le \tau$ such that for $s_{j}=\sum_{i=1}^{j} t_{i}+\sum_{i=1}^{j-1} \tau_{i}$ and $s_{0}=\tau_{0}=0$, we have
		$$
		d_{t_{j}}\left(f_{s_{j-1}+\tau_{j-1}} y, x_{j}\right)<\delta \text { for every } 1 \le j \le k;
		$$
		Furthermore, we have
		$(y,s_k)\in \cO(U_1).
		$
		\item Let $\cC_1\subset\cO(U_1). $ We say that $\cC_1$ has tail (W)-specification at scale $\delta$ with shadowing orbit contained in $\cO(U)$,  if there exist $\tau>0$ and $T_0 > 0$ such that for every $\left\{\left(x_{i}, t_{i}\right)\right\}_{i=1}^{k} \subset \cC_1$ with $t_i\geq T_0 $, there exist a point $y\in U$ and a sequence of "gluing times" $\tau_{1}, \ldots, \tau_{k-1} \in \mathbb{R}^{+}$ with $\tau_{i} \le \tau$ such that for $s_{j}=\sum_{i=1}^{j} t_{i}+\sum_{i=1}^{j-1} \tau_{i}$ and $s_{0}=\tau_{0}=0$, we have
		$$
		d_{t_{j}}\left(f_{s_{j-1}+\tau_{j-1}} y, x_{j}\right)<\delta \text { for every } 1 \le j \le k;
		$$
		furthermore,
		$
		(y,s_k)\in \cO(U).
		$
	\end{enumerate}
	Note that both cases require orbit segments $(f_{s_{j}}y, \tau_j)$ that correspond to the transition from $(x_j,t_j)$ to $(x_{j+1}, t_{j+1})$ to be contained in a larger neighborhood, namely $U_1$ and $U$ respectively. Of course, this would require $\delta>0$ to be sufficiently small. 
\end{definition}

\subsection{The Main Result}

%We assume that $ X\times \{0\}$ (whose elements are identified with empty sets) belongs to $\cP\cap\cG\cap\cS$. This allows us to decompose orbit segments in trivial ways. \\
For a collection of orbit segments  $\cC$ we define the slightly larger collection $[\cC]\supset \cC$ to be
\begin{equation}\label{e.[C]}
	[\cC]:= \{(x,n)\in  \bM\times \mathbb{N}:(f_{-s}x,n+s+t) \in\cC \mbox{ for some } s,t\in[0,1)\}.
\end{equation}
This allows us to pass from continuous time to discrete time.

Our main result is:
\begin{main}\label{m.1}
	Let $(f_t)_{t\in\RR}$ be a continuous flow on a compact metric space $X$. Assume that $\Lambda$ is a compact invariant set of $(f_t)$ that is locally maximal with isolating neighborhood $U$, and $\phi: U\to\RR$ a continuous potential function. Suppose that there exist $\vep>0,\delta>0$ with $\vep\ge1000 \delta$, such that  $P_{\exp}^\perp (\phi,\vep;\Lambda)<P(\phi;\Lambda),$ 
 together with an open neighborhoods $U_1$ of $\Lambda$ satisfying 
	$$
	\Lambda\subset U_1\subset U.
	$$
	 Also assume there exists $\cD_1\subset  \cO(U_1)$ which admits a  $(\cP_1,\cG_1,\cS_1)-$decomposition that induces a $(\cP_0,\cG_0,\cS_0)-$decomposition on $\cD_0 = \cD_1\cap \Lambda\times\RR^+$ with the following properties: 
	\begin{enumerate}[label={(\Roman*)}]
		\hypertarget{I0}{\item [($I_0$)]} $(\cG_0)^1$ has tail (W)-specification at scale $\delta$ with shadowing orbit contained in $\cO(U_1)$.
		\hypertarget{I1}{\item [($I_1$)]} $(\cG_1)^1$ has tail (W)-specification at scale $\delta$ with shadowing orbit contained in $\cO(U)$.
		\item[(II)]  $\phi$ has the Bowen property at scale $\vep$ on $\cG_1$.
		\item[($III$)] $P((\cO(U_1)\setminus\cD_1)\cup [\cP_1]\cup[\cS_1], \phi,\delta,\vep)<P(\phi;\Lambda)$. 
	\end{enumerate}
	Then, there exists a unique equilibrium state for the potential $\phi|_\Lambda$ whose support is contained in $\Lambda$ and is ergodic.
\end{main}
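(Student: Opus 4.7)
The plan is to adapt the Climenhaga--Thompson uniqueness scheme from \cite{CT16, PYY21}, with the key modification being that shadowing orbits produced by the specification conditions may leave $\Lambda$ while remaining in the enlarged neighborhoods $U_1$ or $U$. As in those works, I would establish matching upper and lower Gibbs-type bounds on Bowen balls around points in a ``good core'', and then deduce uniqueness from these bounds.

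First, following the CT tail-decomposition argument, I would use the pressure gap (III) to show that $P(\cG_1^M, \phi, \delta, \vep) = P(\phi; \Lambda)$ for all sufficiently large $M$, and that any equilibrium state $\mu$ supported in $\Lambda$ is concentrated on the good core $\cG_0^M := \cG_1^M \cap (\Lambda \times \RR^+)$, in the sense that $\mu$-typical long orbit segments $(x, t)$ eventually lie in $\cG_0^M$ for some $M$. Almost expansivity of $\mu$ at scale $\vep$ is a consequence of the hypothesis $P_{\exp}^\perp(\phi, \vep; \Lambda) < P(\phi; \Lambda)$. The Bowen property on $\cG_1$ descends to $\cG_1^M$ with distortion $K + 2M \operatorname{Var}(\phi, \vep)$, and hence to $\cG_0^M$. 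The upper Gibbs bound $\mu(B_t(x, \vep)) \le C e^{-t P(\phi; \Lambda) + \Phi_\vep(x, t)}$ then follows from a Brin--Katok type estimate combined with the two-scale pressure definition and almost expansivity. For the lower bound, I would fix $(x, t) \in \cG_0^M$, apply specification ($I_0$) to glue $(x, t)$ with many $(T, \delta)$-separated segments from $\cG_0^M$ nearly realizing $\lambda(\cG_0^M, \phi, \delta, \vep, T)$, and use the Bowen property to produce a collection of shadowing orbits of cardinality growing like $e^{T P(\phi; \Lambda) + \Phi_\vep(x, t) - tP(\phi; \Lambda)}$, each tracking $(x,t)$ within the Bowen ball $B_t(x, 2\delta)$.

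Combining the upper and lower bounds yields $\mu_1(B_t(x, C\delta)) \asymp \mu_2(B_t(x, C\delta))$ for any two ergodic equilibrium states $\mu_1, \mu_2$ on $\Lambda$ and any $x$ in a set of positive $\mu_i$-measure for both. A standard Vitali covering / Lebesgue differentiation argument then forces $\mu_1 \ll \mu_2 \ll \mu_1$, and combined with ergodicity gives $\mu_1 = \mu_2$. Ergodicity of the resulting equilibrium state is proved by applying the same argument to its ergodic components.

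The main obstacle, and the raison d'\^etre of the auxiliary condition ($I_1$), is converting the count of shadowing orbits produced by ($I_0$) into a lower bound on the measure $\mu$, since each shadowing orbit lies in $U_1 \setminus \Lambda$ in general and does not contribute directly to $\mu$. The resolution is to iterate the specification construction using ($I_1$), whose shadowing orbits lie in the still larger neighborhood $U$, producing arbitrarily long orbits in $U$ whose empirical measures -- by local maximality of $\Lambda$ together with Poincar\'e recurrence -- weak-$\ast$ converge to invariant measures supported in $\Lambda$. These limit measures compete with $\mu$ in the variational principle for $P(\phi; \Lambda)$, and the resulting pressure estimate is what ultimately forces $\mu(B_t(x, C\delta))$ to be at least of Gibbs order. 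Carefully quantifying this indirection between the three ``spaces'' $\Lambda \times \RR^+ \subset \cO(U_1) \subset \cO(U)$ is the main technical step distinguishing this proof from \cite{CT16, PYY21}.
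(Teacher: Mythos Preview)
Your proposal misidentifies the role of assumption $(I_1)$ and, as a result, the mechanism you describe for the lower Gibbs bound does not work. The paper does \emph{not} iterate specification via $(I_1)$ to produce long orbits in $U$ whose empirical measures converge to $\Lambda$ and then invoke the variational principle; such an argument yields pressure inequalities, not pointwise lower bounds on $\mu(B_t(x,\rho))$. Instead, $(I_1)$ is used purely to establish an \emph{upper} bound on the partition sum over $\cO(U_1)$: one glues separated sets from $(\cG_1)^1$ using $(I_1)$, obtains shadowing orbits in $\cO(U)$, and deduces $\lambda((\cG_1)^1,\gamma,\eta,t)\le C_2 e^{tP(\phi;\Lambda)}$; combined with the pressure gap $(III)$ this yields $\lambda(\cO(U_1),2\gamma,2\gamma,t)\le C_3 e^{tP(\phi;\Lambda)}$. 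The equality $P(\cO(U),\phi)=P(\cO(U_1),\phi)=P(\phi;\Lambda)$, proved directly from local maximality via the variational principle, is what makes this upper bound sharp.

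The lower Gibbs bound is then proved only for a \emph{specifically constructed} equilibrium state $\mu$, not for an arbitrary one: one builds $\mu$ as a weak-$*$ limit of empirical measures $\mu_s$ coming from $(s,\rho_1)$-separated sets $E_s\subset(\cO(U_1))_s$ (not $\Lambda$!). Local maximality forces the limit to be supported on $\Lambda$. For $(x,t)\in(\cG_0)^1$, one glues via $(I_0)$ with separated segments from $(\cG_0)^1$; the shadowing orbit lies in $\cO(U_1)$ and is therefore $\rho_1$-close to some point of $E_s$, so it contributes to the numerator of $\nu_s(f_{-r}B_t(x,\rho))$. The denominator $\sum_{z\in E_s}e^{\Phi_0(z,s)}$ is controlled by the upper bound on $\lambda(\cO(U_1),\cdot)$ obtained above---this is precisely where $(I_1)$ enters. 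Uniqueness then follows the \cite{CT16,PYY21} template (ergodicity of $\mu$ via a mixing-type Gibbs estimate, plus a mutual-singularity argument showing no other equilibrium state can be singular to $\mu$), not from two-sided Gibbs bounds for arbitrary equilibrium states as you propose.
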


\begin{remark}\label{r.gap}
	Condition $(III)$ in Theorem~\ref{m.1} implies the following ``pressure gap'' property on $\Lambda$: 
		$$
		P((\Lambda\times\RR^+\setminus\cD_0)\cup [\cP_0]\cup[\cS_0], \phi,\delta,\vep)<P(\phi;\Lambda).
		$$
		This is because 
		$$
		[\cP_0]\subset [\cP_1],\,\, [\cS_0]\subset [\cS_1], \text{ and } \Lambda\times\RR^+\setminus\cD_0\subset (\cO(U_1)\setminus\cD_1).
		$$
		In particular, this shows that $\cD_0\ne\emptyset$ and satisfies $P(\cD_0, \phi) =  P(\phi;\Lambda).$

\subsection{Strategy of proof}
The key ingredient to obtain the uniqueness of equilibrium state is the lower Gibbs property on $\cG_0$, namely  Lemma \ref{l.gibbs}. In the proof, the specification on  $\Lambda$ is used to create shadowing orbits in $\cO(U_1)$. As a result, we must obtain:
\begin{enumerate}
	\item a lower bound for the partition sum on $\cG_0$; and 
	\item an upper bound for the partition sum on $\cO(U_1)$.
\end{enumerate} 
The proof of the former requires the pressure gap on $\Lambda$ and is relatively similar to \cite{CT16,PYY21}. For the latter, one must obtain an upper bound on $\cG_1$,  then make use of the pressure gap on $\cO(U_1)$; this shows that all orbit segments collections in $\cO(U_1)$ with large enough pressure must substantially intersect with $\cG_1$; from there, the upper bound on $\cG_1$ translates to an upper bound on $\cO(U_1)$. In this step, the upper bound on $\cG_1$ calls for the specification on $\cG_1$, and the shadowing orbits are contained in $\cO(U)$. As a starting point, we must show that $\Lambda,\cO(\U_1),\cO(U)$ all have the same pressure; this is done in Lemma \ref{l.equalpressure} below.

%a lower bound on $\cG_1$ (which follows immediately from that of $\cG_0$) and

%		$$P(\phi;\Lambda) = P(\Lambda\times\RR^+,\phi) =  \max\left\{P(\Lambda\times\RR^+\setminus\cD_0,\phi), P(\cD_0,\phi)\right\},
%		$$
%		and the previous inequality shows that $P(\Lambda\times\RR^+\setminus\cD_0,\phi)<P(\phi;\Lambda)$.

\end{remark}

%\begin{remark}\label{r.bowen}
%     When $\phi$ has the Bowen property on $\cG_1$ at scale $\epsilon$,  $\phi$ has the Bowen property on $(\cG_1)^1$ at scale $\epsilon$ with distortion constant $$K = K + 2 \operatorname{Var}(\phi, \epsilon).$$ 
%\end{remark}

\section{Lower bound on $\Lambda\times\RR^+$ and upper bound on $\cO(U_1)$}\label{sec3}

In this section, we estimate the growth rate of partition sums on $\Lambda\times\RR^+$ and $\cO(U_1).$ Note that due to monotonicity of the partition function, we usually only need to prove the lower bound on $\Lambda\times\RR^+$ and the upper bound on $\cO(U_1).$

To begin with, we show $\cO(U),\cO(U_1)$ and $\Lambda\times\RR^+$ have equal pressure.

\begin{lemma}\label{l.equalpressure}
	Let $\Lambda$ be a locally maximal compact invariant set with isolating neighborhood $U$. Let $U_1$ be any neighborhood of $\Lambda$ contained in $U$, and $\phi: U\to\RR$ a continuous function. Then we have 
	$$
	P(\cO(U),\phi) = P(\cO(U_1),\phi) = P(\Lambda\times\RR^+, \phi) = P(\phi;\Lambda). 
	$$
\end{lemma}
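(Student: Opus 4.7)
The plan is first to reduce the equality chain to a single inequality. Since $\Lambda\times\RR^+\subset\cO(U_1)\subset\cO(U)$, the monotonicity of the partition function gives
$$
P(\Lambda\times\RR^+,\phi)\le P(\cO(U_1),\phi)\le P(\cO(U),\phi),
$$
and $P(\Lambda\times\RR^+,\phi)=P(\phi;\Lambda)$ by the definition of the topological pressure of the flow restricted to $\Lambda$. So it suffices to prove $P(\cO(U),\phi)\le P(\phi;\Lambda)$.

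The first ingredient I would establish is a two-sided trapping lemma: for every $\eta>0$ there exists $T=T(\eta)>0$ such that any $y\in\bM$ satisfying $f_s(y)\in \Cl(U)$ for all $s\in[-T,T]$ belongs to $B(\Lambda,\eta)$. This follows from the isolating identity $\Lambda=\bigcap_{t\in\RR}f_t(\Cl(U))$ by a standard compactness argument: any hypothetical sequence of counter-examples would accumulate at a point whose full orbit lies in $\Cl(U)$, placing it in $\Lambda$ yet outside $B(\Lambda,\eta)$, a contradiction.

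The heart of the argument is a Misiurewicz-style construction of a flow-invariant measure whose support lies in $\Lambda$. Fix $\delta>0$. For each integer $n$, pick a nearly optimal $(n,\delta)$-separated set $E_n\subset(\cO(U))_n$ with partition sum $Z_n\ge \frac12\,\lambda(\cO(U),\phi,\delta,0,n)$, and define
$$
\sigma_n=\frac1{Z_n}\sum_{x\in E_n} e^{\Phi_0(x,n)}\delta_x,\qquad \mu_n=\frac1n\int_0^n (f_s)_*\sigma_n\,ds.
$$
After passing to a subsequence, $\mu_n$ converges weakly to a probability measure $\mu$ and $\frac1n\log Z_n\to P(\cO(U),\phi,\delta,0)$; a standard Krylov--Bogolyubov-type computation shows $\mu$ is flow-invariant. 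To locate its support, note that for $s\in[T(\eta),n-T(\eta)]$ every point in the support of $(f_s)_*\sigma_n$ has its orbit in $U$ throughout $[-T(\eta),T(\eta)]$, and hence lies in $B(\Lambda,\eta)$ by the trapping lemma. This gives $\mu_n(\overline{B(\Lambda,\eta)})\ge 1-2T(\eta)/n$, so $\mu(\overline{B(\Lambda,\eta)})=1$ for every $\eta>0$, forcing $\mu(\Lambda)=1$.

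To close the argument, invoke the Misiurewicz entropy inequality (applied to $\sigma_n$, $\mu_n$ and a measurable partition of $\bM$ of diameter less than $\delta$ whose boundary is $\mu$-null), which yields
$$
h_\mu(f_1)+\int\phi\,d\mu\ge P(\cO(U),\phi,\delta,0).
$$
Since $\mu$ is flow-invariant and supported on $\Lambda$, the variational principle on $\Lambda$ bounds the left-hand side by $P(\phi;\Lambda)$; sending $\delta\to 0$ then yields $P(\cO(U),\phi)\le P(\phi;\Lambda)$, closing the chain. The step I expect to require the most care is verifying the Misiurewicz estimate in this non-invariant setting, since the separated sets live in $(\cO(U))_n$ rather than in an $f_1$-invariant subset of $\bM$; the classical argument, however, only uses the separation property itself, while flow-invariance of the limit measure emerges automatically from the time averaging.
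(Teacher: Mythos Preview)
Your proposal is correct and follows essentially the same Misiurewicz-plus-variational-principle route as the paper. The only difference is that you use a trapping lemma to locate $\supp\mu$, whereas the paper observes more directly that each $\mu_n$ is already supported in $\Cl(U)$ (since $f_s x\in U$ for all $s\in[0,n)$ whenever $x\in E_n\subset(\cO(U))_n$), so the invariant weak limit $\mu$ has support in $\Cl(U)$ and hence in $\Lambda$ by local maximality---no trapping lemma needed.
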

\begin{proof}
	By monotonicity of the partition sum, we always have
	$$P(\Lambda\times\RR^+, \phi) \le  P(\cO(U_1),\phi)  \le P(\cO(U),\phi)  $$
	Therefore it suffices to show that 	
	$$
	P(\cO(U),\phi)  \le P(\Lambda\times\RR^+, \phi).
	$$
	
	To this end, we let $\gamma>0$ be arbitrary. For any $t>0$, we let $E_t$ be a $(t, \gamma)$-separated set of $\cO(U)_t$ satisfying
   $$\log\sum_{x \in E_{T}} e^{\Phi_{0}(x, T)} > \log\lambda( \cO(U_1), \gamma, T) - 1.$$
    Then define the measures
$$v_t = \frac{\sum_{x \in E_t} e^{\Phi_0(x, t)}\cdot \delta_x}{\sum_{x \in E_t} e^{\Phi_0(x, t)}},$$  and
$$u_t = \frac{1}{t} \int_0^t (f_s)_{\ast}v_t ds.$$
By the compactness of the space of probability measures on $\bM$, there exists a subsequence  $\mu_{t_k}$ converging to an invariant probability measure $\mu$ (possibly depending on $\gamma$) in the $weak^\ast$ topology. Furthermore, $\supp\mu\subset \Cl(U)$ is an invariant set. Since $\Lambda$ is locally maximal with $U$ being an isolating neighborhood (see \eqref{e.iso}), we must have $\supp\mu\subset\Lambda.$

Now due to our choice of $E_t$, the proof of the variational principle \cite[Lemma 8.6]{walters2000} shows that $$P(\cO(U),\phi, \gamma) \le P_\mu(\cO(U),\phi).$$ Again by the variational principle, we have 
    $$P_\mu(\cO(U),\phi)\le P(\Lambda\times\RR^+, \phi),$$
    and finally by the arbitrarity of $\gamma$ we obtain $$P(\cO(U),\phi) \le P(\Lambda\times\RR^+, \phi),$$ as required.
   
\end{proof}

For each $\delta>0$ we define the closed interval
\begin{equation}\label{e.I}
	I_\delta = [8 \delta,\,\, 200 \delta].
\end{equation}
Throughout this article, the parameter $\gamma$ will be taken from $I_\delta$. 
%Two choices of $\gamma$, namely $\frac{1}{2}\rho_1 = 10\delta$ and $2\rho = 2(21 + )\delta $, are particularly important in later sections.

%At the beginning we introduce the relationship between two-scale and single-scale partition functions.

Below we will state a few preliminary lemmas; their proof will not require any specification, and therefore can be obtained by applying the corresponding results in \cite{CT16, PYY21} to $\Lambda = X$.
\begin{lemma}\label{l.4.1}\cite[Lemma 4.1]{CT16} For every $\gamma>0$ and $t_1,\ldots, t_k>0$ we have 
	$$
	\lambda(\Lambda\times\RR^+, 2\gamma,t_1+\cdots+ t_k)\le \prod_{j=1}^k \lambda( \Lambda\times \RR^+,\gamma,\gamma,t_j).
	$$
\end{lemma}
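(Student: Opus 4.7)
The plan is to carry over the standard Climenhaga--Thompson argument from \cite[Lemma 4.1]{CT16} essentially verbatim, since nothing in the proof uses anything beyond the fact that $\Lambda$ is a compact invariant set. Let $T=t_1+\cdots+t_k$ and set $s_0=0$, $s_j=t_1+\cdots+t_j$. First I would fix, for arbitrary $\eta>0$, a $(T,2\gamma)$-separated subset $E\subset(\Lambda\times\RR^+)_T$ whose weighted sum exceeds $\lambda(\Lambda\times\RR^+,2\gamma,T)-\eta$. For each $j\in\{1,\ldots,k\}$ I would then select a maximal $(t_j,\gamma)$-separated subset $F_j\subset\Lambda$; maximality makes $F_j$ automatically $(t_j,\gamma)$-spanning in $\Lambda$, so every point of $\Lambda$ lies within Bowen distance $\gamma$ of some element of $F_j$ measured with $d_{t_j}$.

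Next I would introduce a coding map $\Psi:E\to F_1\times\cdots\times F_k$ defined by $\Psi(x)=(y_1(x),\ldots,y_k(x))$, where each $y_j(x)\in F_j$ is chosen to satisfy $d_{t_j}(f_{s_{j-1}}x,\,y_j(x))\le\gamma$. The key step is to verify that $\Psi$ is injective: if $x\neq x'$ both lie in $E$, then $(T,2\gamma)$-separation produces a time $t\in[0,T]$ with $d(f_tx,f_tx')>2\gamma$; such a $t$ belongs to some interval $[s_{j-1},s_j]$, forcing $d_{t_j}(f_{s_{j-1}}x,f_{s_{j-1}}x')>2\gamma$. Since both orbit segments are within Bowen distance $\gamma$ of their respective codes, the triangle inequality implies $y_j(x)\neq y_j(x')$, hence $\Psi(x)\neq\Psi(x')$. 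This is the only place the factor $2$ in $2\gamma$ is used.

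Finally, the Birkhoff integral over $[0,T]$ telescopes into $k$ pieces, and for each $x\in E$ the containment $f_{s_{j-1}}x\in B_{t_j}(y_j(x),\gamma)$ gives
$$
\Phi_0(x,T)=\sum_{j=1}^{k}\int_0^{t_j}\phi(f_{s_{j-1}+r}x)\,dr \;\le\; \sum_{j=1}^{k}\Phi_\gamma(y_j(x),t_j),
$$
by the very definition of $\Phi_\gamma$. Summing $e^{\Phi_0(x,T)}$ over $x\in E$, pushing the sum forward via $\Psi$, and using injectivity to bound the image sum by the full sum over $F_1\times\cdots\times F_k$ gives the factorization
$$
\sum_{x\in E} e^{\Phi_0(x,T)} \;\le\; \prod_{j=1}^{k}\sum_{y_j\in F_j} e^{\Phi_\gamma(y_j,t_j)} \;\le\; \prod_{j=1}^{k}\lambda(\Lambda\times\RR^+,\gamma,\gamma,t_j),
$$
and letting $\eta\to0$ completes the proof. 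There is no real obstacle here: the argument is essentially bookkeeping, and my only concern is confirming that the maximal separated sets $F_j$ can be chosen inside $\Lambda$ (automatic by compactness and invariance of $\Lambda$), and that the $(T,2\gamma)$ versus $(t_j,\gamma)$ scale matching is handled correctly — which is exactly what the injectivity step above ensures.
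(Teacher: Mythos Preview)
Your proposal is correct and is exactly the standard argument from \cite[Lemma~4.1]{CT16}; the paper itself does not give a separate proof but simply cites that reference, noting that the argument carries over verbatim once one works inside the compact invariant set $\Lambda$. The only point worth noting is that invariance of $\Lambda$ ensures $f_{s_{j-1}}x\in\Lambda$, so the spanning property of each $F_j\subset\Lambda$ applies, which you already flag.
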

Taking logarithm and sending $k$ to infinity, we obtain:
\begin{lemma}(See \label{l.4.2}\cite[Lemma 4.2]{CT16} and \cite[Lemma 5.2]{PYY21}) Assume that $P_{\exp}^\perp (\phi,\vep;\Lambda)<P(\phi;\Lambda)$ where $\vep=1000 \delta$. For every $t>0$ and $\gamma\in I_\delta$, we have 
	$$
	\lambda(\Lambda\times\RR^+, \gamma,\gamma,t)\ge e^{tP(\phi;\Lambda)}.
	$$
\end{lemma}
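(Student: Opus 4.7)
The plan is to apply Lemma~\ref{l.4.1} with $t_1=\cdots=t_k=t$, which yields
$$
\lambda(\Lambda\times\RR^+, 2\gamma, kt)\le \lambda(\Lambda\times\RR^+,\gamma,\gamma,t)^k.
$$
Taking logarithms, dividing by $kt$, and passing to the $\limsup$ as $k\to\infty$, I obtain the single-scale upper bound
$$
P(\Lambda\times\RR^+,\phi,2\gamma)\le \frac1t\log \lambda(\Lambda\times\RR^+,\gamma,\gamma,t).
$$
It therefore suffices to prove that $P(\Lambda\times\RR^+,\phi,2\gamma)\ge P(\phi;\Lambda)$ for every $\gamma\in I_\delta$.

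For this lower bound, I would combine the variational principle at a fixed scale with a Brin--Katok type entropy formula and the pressure gap hypothesis. Since $\gamma\in I_\delta=[8\delta,200\delta]$, we have $2\gamma\le 400\delta<\vep=1000\delta$. For any ergodic $\mu\in\cM_f^e(\Lambda)$ with $\mu(\operatorname{NE}(\vep;\Lambda))=0$, the condition $\Gamma_\vep(x)\subset f_{[-s,s]}(x)$ holding on a full-measure set pins the asymptotic Bowen-ball decay rate at scale $2\gamma$: $-\lim_{t\to\infty}\frac1t\log\mu(B_t(x,2\gamma))=h_\mu(f_1)$ for $\mu$-a.e.\ $x$. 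A standard covering/separated-set argument then yields $P(\Lambda\times\RR^+,\phi,2\gamma)\ge h_\mu(f_1)+\int\phi\,d\mu$ for every such $\mu$.

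It remains to verify that this restricted supremum still equals $P(\phi;\Lambda)$. The set $\operatorname{NE}(\vep;\Lambda)$ is $f_t$-invariant (from the identity $\Gamma_\vep(f_tx)=f_t\Gamma_\vep(x)$), so any ergodic measure satisfies $\mu(\operatorname{NE}(\vep;\Lambda))\in\{0,1\}$. By Definition~\ref{d.OP}, ergodic measures with $\mu(\operatorname{NE}(\vep;\Lambda))=1$ contribute at most $P_{\exp}^\perp(\phi,\vep;\Lambda)<P(\phi;\Lambda)$. Hence the variational principle for $P(\phi;\Lambda)$ on $\Lambda$, when restricted to ergodic measures, is effectively a supremum over almost expansive measures, giving $P(\Lambda\times\RR^+,\phi,2\gamma)\ge P(\phi;\Lambda)$ and closing the argument.

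The main technical obstacle is the fixed-scale Brin--Katok identity for almost expansive measures, which is where the choice $\vep\ge 1000\delta$ (and the resulting gap $\vep>2\gamma$) is used in an essential way. This step is standard, however, and appears explicitly in the parallel arguments of \cite{CT16,PYY21}; the novelty of the present lemma is only that it is localized to the invariant set $\Lambda$ rather than the whole ambient space.
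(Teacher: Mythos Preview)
Your proposal is correct and follows exactly the route the paper intends: the paper's entire ``proof'' is the one-line remark ``Taking logarithm and sending $k$ to infinity, we obtain'' together with the citations to \cite[Lemma 4.2]{CT16} and \cite[Lemma 5.2]{PYY21}, and your argument simply unpacks that reference---Lemma~\ref{l.4.1} gives submultiplicativity, and the hypothesis $P_{\exp}^\perp(\phi,\vep;\Lambda)<P(\phi;\Lambda)$ is used (via Brin--Katok for almost expansive ergodic measures at the scale $2\gamma<\vep$) to identify $P(\Lambda\times\RR^+,\phi,2\gamma)$ with $P(\phi;\Lambda)$.
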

%\begin{proof}
%    Fix $t > 0$ and let $t_i = t$ for any $i,$ then Lemma \ref{l.4.1} gives $\lambda(\Lambda, 2 \gamma, k t) \le(\lambda(\Lambda, \gamma, \gamma, t))^{k},$  and $\frac{1}{k t} \log \lambda(\Lambda, 2 \gamma, k t) \le \frac{1}{t} \log \lambda(\Lambda, \gamma, \gamma, t).$
%   The left hand side converges to $P(\phi, 2 \gamma)$ as $k \rightarrow \infty$, and the result follows by $P_{\exp}^\perp (\phi,\vep;\Lambda)<P(\phi;\Lambda).$
%\end{proof}

Since $\Lambda\times\RR^+\subset\cO(U_1)\subset \cO(U)$, we immediately obtain the following lemma from monotonicity. 

\begin{lemma}\label{l.4.2aa} Assume that $P_{\exp}^\perp (\phi,\vep;\Lambda)<P(\phi;\Lambda)$ where $\vep=1000 \delta$. For every $t>0$ and $\gamma\in I_\delta$, we have 
	$$
	\lambda(\cO(U), \gamma,\gamma,t)\ge e^{tP(\phi;\Lambda)}.
	$$
	Same holds for $\cO(U_1).$
\end{lemma}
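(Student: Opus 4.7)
The plan is to observe that this lemma is essentially a one-line consequence of Lemma \ref{l.4.2} together with the third monotonicity property of the partition function (monotonicity in the collection of orbit segments). Since $\Lambda\times\RR^+ \subset \cO(U_1) \subset \cO(U)$, and these three collections share the same ambient potential $\phi$ and the same scales $(\gamma,\gamma)$, I would simply chain the inequalities
$$
\lambda(\cO(U),\gamma,\gamma,t) \;\ge\; \lambda(\cO(U_1),\gamma,\gamma,t) \;\ge\; \lambda(\Lambda\times\RR^+,\gamma,\gamma,t),
$$
and then apply Lemma \ref{l.4.2} to the right-most term.

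To justify the monotonicity step cleanly, I would remark that any $(t,\gamma)$-separated set $E \subset (\Lambda\times\RR^+)_t$ is automatically a $(t,\gamma)$-separated set in $(\cO(U_1))_t$ and in $(\cO(U))_t$, because the Bowen metric $d_t$ does not depend on the enveloping collection, and containment of the base sets is preserved. The associated weight $\sum_{x\in E} e^{\Phi_\vep(x,t)}$ is the same regardless of which collection we view $E$ as living in, so the suprema defining the three partition functions are taken over progressively larger families of candidate sets, giving the desired inequality.

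The hypotheses of Lemma \ref{l.4.2}, namely $P_{\exp}^\perp(\phi,\vep;\Lambda)<P(\phi;\Lambda)$ with $\vep=1000\delta$ and $\gamma\in I_\delta$, are exactly the hypotheses assumed here, so no additional work is needed. The statement ``same holds for $\cO(U_1)$'' follows from the middle inequality above.

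There is no real obstacle to this argument — it is purely a bookkeeping observation that extending the family of admissible orbit segments (while leaving the separation scale, distortion scale, and potential unchanged) can only increase the partition sum. The entire proof fits in a few lines.
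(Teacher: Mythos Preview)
Your proposal is correct and matches the paper's own treatment exactly: the paper states just before Lemma~\ref{l.4.2aa} that ``Since $\Lambda\times\RR^+\subset\cO(U_1)\subset \cO(U)$, we immediately obtain the following lemma from monotonicity,'' and provides no further proof. Your argument is precisely this monotonicity observation applied to Lemma~\ref{l.4.2}.
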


In the next lemma, we will use specification on $(\cG_1)^1$ to obtain a lower bound on the growth of partition sum on $(\cG_1)^1$. Due to Assumption (\hyperlink{I1}{$I_1$}), the shadowing orbit will be taken from $\cO(U)$.

%Note that the shadowing orbits will be taken from $\cO(U)$ we will map the separate sets of $\cG_1$ to the separate sets of $\cO(U)$ to control the growth rate of $\cG_1$. Note that the Bowen property on $\cG_1$ implies the Bowen property on $\cG_0$. 

\begin{lemma}\label{l.4.3}(See \cite[Proposition 4.3]{CT16} and \cite[Lemma 5.3]{PYY21}) %The following statements hold:
	%\begin{enumerate}
		%\item 
		%Suppose that $(I_0)$ holds for $t>T_0$ with maximum gap size $\tau$, and the potential $\phi$ has the Bowen property on $\cG_0$ at scale $\vep$. Then for every $\gamma\in I_\delta$, there is a constant $C_1>0$ so that for every $k\in\NN$, $t_1,\ldots, t_k\ge T_0$, with $T:=\sum_{i=1}^kt_i + (k-1)\tau$ and for any $\theta<\gamma/2-\delta$, we have 
		%$$
		%\prod_{j=1}^{k} \lambda(\cG_0,\gamma,t_j)\le C_1^k\lambda(  \cO(U_1),\theta,T).
		%$$
		%\item 
		Suppose that Assumption (\hyperlink{I1}{$I_1$}) holds for $t>T_0$ with maximum gap size $\tau$, and the potential $\phi$ has the Bowen property on $\cG_1$ at scale $\vep$. Then for every $\gamma\in I_\delta$ and every $\theta<\gamma/2-\delta$, there is a constant $C_1>0$ so that for every $k\in\NN$, $t_1,\ldots, t_k\ge T_0$, writing $T:=\sum_{i=1}^kt_i + (k-1)\tau$, we have 
		$$
		\prod_{j=1}^{k} \lambda((\cG_1)^1,\gamma,t_j)\le C_1^k\lambda(  \cO(U),\theta,T).
		$$
	%\end{enumerate}

\end{lemma}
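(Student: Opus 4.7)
The plan is to follow the template of \cite[Proposition 4.3]{CT16} and \cite[Lemma 5.3]{PYY21}, with the key adjustment that Assumption (\hyperlink{I1}{$I_1$}) places the shadowing orbit in $\cO(U)$ rather than in $\Lambda\times\RR^+$.  First, for each $j=1,\ldots,k$, I would fix a near-maximizing $(t_j,\gamma)$-separated set $E_j\subset ((\cG_1)^1)_{t_j}$, with partition sum at least $\tfrac12\lambda((\cG_1)^1,\gamma,t_j)$.  For each tuple $\vec{x}=(x_1,\ldots,x_k)\in E_1\times\cdots\times E_k$, specification produces a shadowing point $y_{\vec{x}}\in U$ and gluing times $\tau_1(\vec{x}),\ldots,\tau_{k-1}(\vec{x})\in[0,\tau]$ with $(y_{\vec{x}},s_k)\in\cO(U)$ and $d_{t_j}(f_{s_{j-1}+\tau_{j-1}}y_{\vec{x}},x_j)<\delta$ for every $j$.

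Next, the Bowen property at scale $\vep>\delta$ on $\cG_1$, with distortion constant $K$, gives $|\Phi_0(x_j,t_j)-\int_{s_{j-1}+\tau_{j-1}}^{s_{j-1}+\tau_{j-1}+t_j}\phi\circ f_s\,ds|\le K$ for each $j$.  Summing in $j$ and bounding the contribution of the $k-1$ gap integrals by $(k-1)\tau\|\phi\|_\infty$, then accounting for the length discrepancy $T-s_k\in[0,(k-1)\tau]$ by either extending the shadowing orbit inside $\cO(U)$ (possible because the $\tau_j$ may be replaced by $\tau$ after partitioning into finitely many subclasses by their integer parts) or absorbing $e^{(k-1)\tau\|\phi\|_\infty}$ into the constant, we obtain $\prod_j e^{\Phi_0(x_j,t_j)}\le C_0^k\, e^{\Phi_0(y_{\vec{x}},T)}$ for some $C_0=C_0(\phi,K,\tau)$.

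The main obstacle is the injectivity step needed to sum this estimate.  Because the gluing times depend on $\vec{x}$, different tuples give shadowing points that naturally compare at different times, so a naive triangle inequality does not yield $(T,\theta)$-separation.  I would handle this by discretizing: using uniform continuity of the flow on the compact space $\bM$, pick $\eta=\eta(\gamma,\delta,\theta)>0$ such that $d(f_s z,z)<\tfrac12(\gamma-2\delta-2\theta)$ whenever $|s|\le \eta$ and $z\in\bM$; the hypothesis $\theta<\gamma/2-\delta$ makes the right-hand side positive.  Partition the tuples $\vec{x}$ into equivalence classes by the integer parts $\lfloor \tau_j(\vec{x})/\eta\rfloor$, producing at most $(\lfloor\tau/\eta\rfloor+1)^{k-1}$ classes.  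Within a class, if $\vec{x}\neq\vec{x}'$ and $j_0$ is the first index where $x_{j_0}\neq x'_{j_0}$, pick $s^\ast\in[0,t_{j_0}]$ with $d(f_{s^\ast}x_{j_0},f_{s^\ast}x'_{j_0})>\gamma$; combining the two $\delta$-shadowing estimates at the (nearby) times $r=s_{j_0-1}+\tau_{j_0-1}(\vec{x})$ and $r'=s'_{j_0-1}+\tau_{j_0-1}(\vec{x}')$ with the time-shift bound yields $d(f_{r+s^\ast}y_{\vec{x}},f_{r+s^\ast}y_{\vec{x}'})>\gamma-2\delta-(\gamma-2\delta-2\theta)=2\theta>\theta$, and $r+s^\ast\le T$; hence within each class the map $\vec{x}\mapsto y_{\vec{x}}$ injects into a $(T,\theta)$-separated subset of $\cO(U)_T$.

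Finally, combining these steps, the weighted sum over tuples in a single class is bounded by $C_0^k\,\lambda(\cO(U),\theta,T)$, and summing over the at most $(\lfloor\tau/\eta\rfloor+1)^{k-1}$ classes (plus absorbing the factor-of-$2$ from near-maximality) gives the desired inequality with $C_1=2C_0\cdot(\lfloor\tau/\eta\rfloor+1)$.  The technical heart of the argument is the discretization-and-separation step, for which the slack $\theta<\gamma/2-\delta$ is precisely what is required; once this is in place, the partition-sum bookkeeping is routine.
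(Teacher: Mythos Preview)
Your outline matches the paper's, but there is a genuine gap at the step where you need $y_{\vec{x}}\in(\cO(U))_T$. Specification (Assumption $(I_1)$) only yields $(y_{\vec{x}},s_k)\in\cO(U)$ with $s_k=\sum_i t_i+\sum_i\tau_i(\vec{x})\le T$; since $\Lambda$ is merely locally maximal, the orbit of $y_{\vec{x}}$ may exit $U$ on $[s_k,T]$, so neither $\Phi_0(y_{\vec{x}},T)$ (recall $\phi$ is only defined on $U$) nor the containment $y_{\vec{x}}\in(\cO(U))_T$ is available, and hence your separated set cannot be compared with $\lambda(\cO(U),\theta,T)$. Neither of your proposed fixes works: ``replacing $\tau_j$ by $\tau$'' is not something specification provides, and ``absorbing $e^{(k-1)\tau\|\phi\|}$'' already presupposes that $\Phi_0(y_{\vec{x}},T)$ makes sense. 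The paper handles this by fixing once and for all a segment $(z,t_0)\in(\cG_1)^1$ with $t_0>T_0$ and, for each $k$, appending $N$ copies of $(z,t_0)$ (with $N$ the least integer satisfying $Nt_0\ge k\tau$) to the tuple $(x_1,\ldots,x_k)$ \emph{before} applying specification; the resulting shadowing orbit then has length at least $T$ inside $U$, which forces $y\in(\cO(U))_T$, after which one maps $y$ to the nearest point of a near-maximizing $(T,\theta)$-separated set $E_T\subset(\cO(U))_T$.

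There is also a smaller slip in your discretization. Partitioning by the individual $\lfloor\tau_j/\eta\rfloor$ only gives $|\tau_j-\tau_j'|<\eta$ for each $j$, so $|r-r'|=\bigl|\sum_{i<j_0}(\tau_i-\tau_i')\bigr|$ can be as large as $(k-1)\eta$ and your time-shift bound (stated for $|s|\le\eta$) does not apply; shrinking $\eta$ to order $1/k$ would make the class count super-exponential in $k$. The remedy, used in \cite{CT16} and in the paper, is to discretize the \emph{cumulative} sums $\tau_1+\cdots+\tau_i$ into $km$ intervals of length $\tau/m$: within a class one then has $|r-r'|<\tau/m$ uniformly in $k$, while the number of classes remains at most $(m+1)^{k-1}$, and the $(T,\theta)$-separation argument goes through with $m$ chosen so that $d(w,f_s w)<\gamma-2(\delta+\theta)$ for all $w\in\bM$ and $|s|<\tau/m$.
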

\begin{proof}
	
	{ We fix any $(z,t_0)\in (\cG_1)^1$ with $t_0>T_0$. For a given $k\in\NN$ we fix $N$ the smallest positive integer such that $N \cdot t_0\ge k\tau$.  }

     %We prove part (2) for it will be used to prove the upper bound of $(\cG_1),$ from monotonicity the upper bound of $(\cG_0)$ is 
     %immediately obtained, which bypasses part (1). And the proof of parts (1) and (2) are indeed the same.
     
    Since $(\cO(U))_{t_j}$ and $((\cG_1)^1)_{t_j}$ may not be compact, we let $E_{j} \subset ((\cG_1)^1)_{t_j}$ be $(t_{j}, \gamma)$-separated sets and 
     $E_T \subset (\cO(U))_T$ be $(\theta, T)$-separated sets such that $$\sum_{x \in E_{j}}e^{\Phi_{0}(x, t_j)} > \lambda((\cG_1)^1,\gamma, t_j) -  \epsilon, \mbox{ and } \sum_{x \in E_{T}} e^{\Phi_{0}(x, T)}> \lambda( \cO(U_1), \theta, T) -  \epsilon$$ for a small $\epsilon > 0$. 
    
    \begin{figure}[h!]
    	\centering
    	\def\svgwidth{\columnwidth}
    	\includegraphics[scale=1.05]{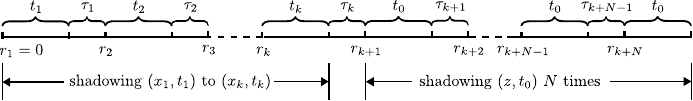}
    	\caption{Applying the specification}
    	\label{f.1}
    \end{figure}
    
     By Assumption (\hyperlink{I1}{$I_1$}), for any vector $\mathbf{x} = (x_{1}, \ldots, x_{k}) \in \prod_{j=1}^{k} E_{j}$,  there are $y=y(\mathbf{x}) \in U$ and $\boldsymbol{\tau}=\boldsymbol{\tau}(\mathbf{x})=(\tau_{1}, \ldots, \tau_{k-1}, {\tau_k,\cdots, \tau_{k+N-1}  }) \in[0, \tau]^{k+N-1}$ such that, with $t_i = t_0$ for $i=k+1,\ldots, k+N$ and $r_1(\mathbf{x})=0,$
     $$r_j(\mathbf{x}) =\sum_{i=1}^{j-1}(t_{i}+\tau_{i}),$$
	one has	
      $f_{r_i}(y) \in B_{t_{i}}(x_{i}, \delta)$ for every $i=1,\cdots, k$ and $f_{r_i}(y) \in B_{t_{0}}(z)$ for $i = k+1,\cdots, k+N$ (see Figure \ref{f.1}). %Moreover, $\tau_{j}$ depends only on $x_{1}, \ldots, x_{j+1}$.
	Furthermore, the shadowing orbits $(y,r_{k+N}(\mathbf{x}) + t_{0})$ is contained in $\cO(U)$. As a result, we have 
	$$
	y\in (\cO(U))_{r_{k+N}(\mathbf{x}) + t_{0}}.
	$$ 
	From the definition of $\cO(U)$ it is clear that 
	$$
	(\cO(U))_{s} \subset (\cO(U))_{t} \mbox{ whenever } s\ge t.
	$$
	Since $r_{k+N}(\mathbf{x}) + t_{0}\ge T$ due to the choice of $N$, we obtain 
	$$
	y\in (\cO(U))_{T}.
	$$

	Since $E_T $ is a maximizing $(\theta, T)$-separated sets of $(\cO(U))_T$, it must be $(\theta, T)$-spanning for $(\cO(U))_T$. As a result, we can find a point $x \in E_T$ closest to $y$ in $d_T$-metric. Now we can define $\pi: \prod E_{j} \rightarrow E$, $\pi$ maps each $(x_{1}, \ldots, x_{k}) \in \prod_{j=1}^{k} E_{j}$ to a point in $E_T$ as described above.   We now need to control the multiplicity of $\pi$. %and the difference between the weights of the orbit segments. 

     Now for $\mathbf{x}, \mathbf{x}' \in \prod_{j=1}^{k} E_{j},$ for simplicity we let $r_j(\mathbf{x}) = r_j, r_j(\mathbf{x}') = r_j'$ for any $j$. 
     By triangle inequality, we have
     \begin{equation}\label{e.tri}
          d_{t_{j}}(x_{j}, f_{r_j}(\pi \mathbf{x})) \le \delta+\theta \qquad d_{t_{j}}(x_{j}^{\prime}, f_{r_j^{\prime}}(\pi \mathbf{x}^{\prime})) \le \delta+\theta
     \end{equation}
     and, for every $j=1,\cdots, k$, 
      \begin{align}\label{e.4.3}
         d_{T}(\pi \mathbf{x}, \pi \mathbf{x}^{\prime}) \geq\,& d_{t_{j}}(f_{r_{j}} \pi \mathbf{x}, f_{r_{j}} \pi \mathbf{x}^{\prime}) \notag\\
         \geq \,& d_{t_{j}}(f_{r_{j}} \pi \mathbf{x}, f_{r_{j}^{\prime}} \pi \mathbf{x}^{\prime})-d_{t_{j}}(f_{r_{j}} \pi \mathbf{x}^{\prime}, f_{r_{j}^{\prime}} \pi \mathbf{x}^{\prime}) \notag\\
         \geq \,& d_{t_{j}}(x_{j}, x_{j}^{\prime})-d_{t_{j}}(x_{j}, f_{r_{j}} \pi \mathbf{x})-d_{t_{j}}(f_{r_{j}^{\prime}} \pi \mathbf{x}^{\prime}, x_{j}^{\prime}) \\ &-  d_{t_{j}}(f_{r_{j}} \pi \mathbf{x}^{\prime}, f_{r_{j}^{\prime}} \pi \mathbf{x}^{\prime}).  \notag
     \end{align}  
     
     Take $m$ large enough such that $d(x, f_{s} x)<\gamma - 2(\delta + \theta)$ for every $x \in \bM$ and every $s \in(-\tau / m, \tau / m)$. 
    We partition the interval $[0, k \tau]$ into $k m$ sub-intervals $I_{1}, \ldots, I_{k m}$ of length $\tau / m$, denoting this partition $P$. 
    Given $\mathbf{x} \in \prod_j E_{j}$, take the sequence $n_{1}, \ldots, n_{k}$ so that
$$
\tau_{1}(\mathbf{x})+\cdots+\tau_{i}(\mathbf{x}) \in I_{n_{i}} \text { for every } 1 \le i \le k-1.
$$
Now let $\ell_{1}=n_{1}$ and $\ell_{i+1}=n_{i+1}-n_{i}$ for $1 \le i \le k-2$, and let $\ell(\mathbf{x}):=(\ell_{1}, \ldots, \ell_{k-1})$. Since $\tau_{i+1}(\mathbf{x}) \in[0, \tau]$, we have $n_{i} \le n_{i+1} \le n_{i}+m$ for each $i$, and thus $\ell(\mathbf{x}) \in\{0, \ldots, m-1\}^{k-1}$.

Now given $\bar{\ell} \in\{0, \ldots, m-1\}^{k-1}$, let $E^{\bar{\ell}} \subset \prod E_{j}$ be the set of all $\mathbf{x}$ such that ${\ell}(\mathbf{x})=\bar{\ell}$. Note that if $\mathbf{x}, \mathbf{x}^{\prime} \in E^{\bar{\ell}}$ and $i \in\{1, \ldots, k-1\}$, then by construction, $\tau_{1}(\mathbf{x})+\cdots+\tau_{i}(\mathbf{x})$ and $\tau_{1}^{\prime}(\mathbf{x})+\cdots+\tau_{i}^{\prime}(\mathbf{x})$ belong to the same element of the partition $P$.

We show that $\pi$ is $1-1$ on each $E^{\bar{\ell}}$. Fix $\overline{{\ell}}$ and let $\mathbf{x}, \mathbf{x}^{\prime} \in E^{\bar{\ell}}$ be distinct. Let $j$ be the smallest index such that $x_{j} \neq x_{j}^{\prime}$, we have $|r_i-r_i^{\prime}|=|\tau_{i}- \tau_{i}^{\prime}|<\tau / m$ if $i < j$. Using (\ref{e.4.3}) we get:
\begin{align*}
         d_{T}(\pi \mathbf{x}, \pi \mathbf{x}^{\prime}) \geq  \,& d_{t_{j}}(x_{j}, x_{j}^{\prime})-d_{t_{j}}(x_{j}, f_{r_{j}} \pi \mathbf{x})-d_{t_{j}}(f_{r_{j}^{\prime}} \pi \mathbf{x}^{\prime}, x_{j}^{\prime}) \\ &-  d_{t_{j}}(f_{r_{j}} \pi \mathbf{x}^{\prime}, f_{r_{j}^{\prime}} \pi \mathbf{x}^{\prime})  \\
         >\, & \gamma - 2(\delta + \theta) - d_{t_{j}}(f_{r_{j}} \pi \mathbf{x}^{\prime}, f_{r_{j}^{\prime}} \pi \mathbf{x}^{\prime})\\ >\, & 0,
     \end{align*}  
   where the last inequality is due to the choice of $m$.
   This proves that there is a constant $C$ depends on $\gamma, \delta, \theta $  such that $\# \pi^{-1}(x) \le C^{k}$ for every $x \in E_T$, and a simple calculation (here $\|\phi\|$ denotes the $C^0$ norm of $\phi$)
    gives
    \begin{align*}
        \Phi_{0}(\pi \mathbf{x}, T)=\int_{0}^{T} \phi\left(f_{s}(\pi \mathbf{x})\right) d s \geq & -(k-1) \tau\|\phi\|+\sum_{j=1}^{k} \Phi_{0}\left(f_{r_j}(\pi \mathbf{x}), t_{j}\right)\\  \geq &  -(k-1) \tau\|\phi\|+ \sum_{j=1}^{k}(\Phi_{0}\left(x_{j}, t_{j}\right)-K) \\ \geq & -k(\tau\|\phi\|+K)+\sum_{j=1}^{k} \Phi_{0}\left(x_{j}, t_{j}\right).
    \end{align*}
   Here $K$ is the distortion constant that appears in the Bowen property. We obtain  
\begin{align*}
\sum_{z \in E_T} e^{\Phi_{0}(z, T)} \geq & C^{-k} \sum_{\mathbf{x} \in \prod_{j} E_{j}} e^{\Phi_{0}(\pi \mathbf{x}, T)} \\
 \geq & C^{-k} e^{-k (\tau\|\phi\|+K)} \sum_{\mathbf{x} \in \prod_{j} E_{j}} \prod_{j} e^{\Phi_{0}\left(x_{j}, t_{j}\right)}.
\end{align*}
 We take $\epsilon \rightarrow 0$ to get the inequality we need. 
\end{proof}
We get the following lemma using the monotonicity of $ \lambda((\cG_1)^1, \gamma, \eta_1, t) $ in both scales and the Bowen property.
\begin{lemma}(See \cite[Corollary 4.6]{CT16} and \cite[Lemma 5.4]{PYY21})\label{l.4.6} 
	Under the assumptions of Lemma \ref{l.4.3}, for every $\eta_1 \le\vep$ and for every $\eta_2>0$ one has 
	$$\prod_{j=1}^{k}\lambda((\cG_1)^1,\gamma,\eta_1,t_j)\le (e^KC_1)^k\lambda( \cO(U), \theta,\eta_2,T).$$
	Here $K$ is the constant in the Bowen property.	
\end{lemma}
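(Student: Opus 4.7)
The plan is to reduce Lemma~\ref{l.4.6} to Lemma~\ref{l.4.3} in two independent moves: one on the left-hand side, using the Bowen property to swap the inner scale $\eta_1$ for scale $0$; and one on the right-hand side, using the monotonicity of the partition function in the inner scale to swap scale $0$ for $\eta_2$. No further specification is needed beyond what already went into Lemma~\ref{l.4.3}.

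For the left-hand side, fix $j$ and an arbitrary $(t_j,\gamma)$-separated set $E_j \subset ((\cG_1)^1)_{t_j}$. For each $x \in E_j$, the Bowen property on $\cG_1$ at scale $\vep$ (hence, since $\eta_1 \le \vep$, at scale $\eta_1$) with distortion constant $K$ gives
$$
\Phi_{\eta_1}(x,t_j) \;=\; \sup_{y \in B_{t_j}(x,\eta_1)} \Phi_0(y,t_j) \;\le\; \Phi_0(x,t_j) + K.
$$
Summing and taking suprema over separated sets yields $\lambda((\cG_1)^1,\gamma,\eta_1,t_j) \le e^{K} \lambda((\cG_1)^1,\gamma,t_j)$ for each $j$. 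Multiplying over $j=1,\dots,k$ and invoking Lemma~\ref{l.4.3} gives
$$
\prod_{j=1}^{k} \lambda((\cG_1)^1,\gamma,\eta_1,t_j) \;\le\; e^{kK}\prod_{j=1}^k \lambda((\cG_1)^1,\gamma,t_j) \;\le\; e^{kK} C_1^k\, \lambda(\cO(U),\theta,T).
$$

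For the right-hand side, the monotonicity stated in the remark in Section~\ref{sec2}, namely that $\lambda(\cC,\phi,\delta,\vep,t)$ is monotonically decreasing as $\vep$ decreases, means that $\lambda(\cO(U),\theta,T) = \lambda(\cO(U),\theta,0,T) \le \lambda(\cO(U),\theta,\eta_2,T)$ for any $\eta_2>0$. Combining this with the inequality from the previous paragraph yields exactly
$$
\prod_{j=1}^{k} \lambda((\cG_1)^1,\gamma,\eta_1,t_j) \;\le\; (e^{K}C_1)^k\, \lambda(\cO(U),\theta,\eta_2,T),
$$
which is the desired bound. There is no real obstacle here: once Lemma~\ref{l.4.3} is in hand, the only subtlety is checking that the Bowen property applies to every $x$ appearing on the left, which is guaranteed because the left-hand separated sets are drawn from $((\cG_1)^1)_{t_j}$ and $\eta_1 \le \vep$.
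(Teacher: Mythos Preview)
Your proof is correct and follows essentially the same approach as the paper, which offers only the one-line sketch ``using the monotonicity of $\lambda((\cG_1)^1,\gamma,\eta_1,t)$ in both scales and the Bowen property.'' Your write-up is a faithful expansion of that sketch: Bowen on the left to drop $\eta_1$ to $0$, Lemma~\ref{l.4.3}, then monotonicity on the right to raise $0$ to $\eta_2$. The only minor caveat is that the Bowen property is assumed on $\cG_1$, so on $(\cG_1)^1$ the distortion constant is strictly $K(1)=K+2\operatorname{Var}(\phi,\vep)$ rather than $K$; but the paper itself uses $K$ loosely in both the statement of this lemma and the proof of Lemma~\ref{l.4.3}, so this is not a defect in your argument relative to the source.
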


Taking logarithm, sending $k$ to infinity and applying Lemma \ref{l.equalpressure}, we obtain the following upper bound on $(\cG_1)^1$. 

\begin{proposition}(See \cite[Proposition 4.7]{CT16} and \cite[Proposition 5.5, Lemma 6.1]{PYY21})\label{p.4.7}
	Suppose that $(\cG_1)^1$ has tail specification at scale $\delta>0$ {with shadowing orbit contained in $U$}, and $\phi$ has the Bowen property on $\cG_1$ at scale $\vep$. Then for every $\gamma\in I_\delta$ and $\eta\in[0,\vep]$, there exists a constant $C_2>0$ for which
	$$\lambda((\cG_1)^1, \gamma,\eta,t)\le C_2 e^{tP(\phi;\Lambda)}$$
	for any $t\ge 0$. Since $(\cG_0)^1\subset (\cG_1)^1$, we also have 
	$$\lambda((\cG_0)^1, \gamma,\eta,t)\le C_2 e^{tP(\phi;\Lambda)}.$$
\end{proposition}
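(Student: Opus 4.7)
The plan is to apply Lemma~\ref{l.4.6} with all times equal, $t_1 = \cdots = t_k = t$, take logarithms and send $k \to \infty$, then pass to the limit in the auxiliary scale $\eta_2$ to identify the resulting exponential rate with $P(\phi;\Lambda)$ via Lemma~\ref{l.equalpressure}. This is the standard ``iterate the specification, then pass to the pressure'' recipe from \cite{CT16,PYY21}, adapted so that the shadowing orbits live in $\cO(U)$ rather than in $\Lambda$.

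Concretely, fix $\gamma \in I_\delta$ and $\eta \in [0, \vep]$, choose $\theta \in (0, \gamma/2 - \delta)$ (the hypothesis inherited from Lemma~\ref{l.4.3}), and fix an auxiliary scale $\eta_2 > 0$. Applying Lemma~\ref{l.4.6} with $t_j \equiv t$, $\eta_1 = \eta$, and writing $T_k := kt + (k-1)\tau$, we obtain
\begin{equation*}
\lambda((\cG_1)^1, \gamma, \eta, t)^k \le (e^K C_1)^k \, \lambda(\cO(U), \theta, \eta_2, T_k).
\end{equation*}
Taking logarithms, dividing by $k$, and taking $\limsup_{k \to \infty}$ on the right-hand side (the left is independent of $k$), we use $T_k/k \to t+\tau$ together with the definition of the pressure to obtain
\begin{equation*}
\log \lambda((\cG_1)^1, \gamma, \eta, t) \le K + \log C_1 + (t+\tau)\, P(\cO(U), \phi, \theta, \eta_2).
\end{equation*}
Since the left-hand side does not depend on $\eta_2$, we now let $\eta_2 \to 0^+$ on the right. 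The elementary estimate $\Phi_{\eta_2}(x, T) \le \Phi_0(x, T) + T\,\Var(\phi, \eta_2)$ together with uniform continuity of $\phi$ on $\Cl(U)$ yields $\lim_{\eta_2 \to 0^+} P(\cO(U), \phi, \theta, \eta_2) = P(\cO(U), \phi, \theta) \le P(\cO(U), \phi) = P(\phi;\Lambda)$, the last equality being Lemma~\ref{l.equalpressure}. Exponentiating and setting $C_2 := e^K C_1\, e^{\tau P(\phi;\Lambda)}$ gives the advertised bound; the assertion for $(\cG_0)^1$ then follows from $(\cG_0)^1 \subset (\cG_1)^1$ and the monotonicity of $\lambda$ in the orbit-segment collection.

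The only genuinely subtle step is the passage from the two-scale partition sum $\lambda(\cO(U), \theta, \eta_2, T_k)$ to a quantity governed by $P(\phi;\Lambda)$. Because $\lambda$ is monotone increasing in the second scale $\eta_2$, the two-scale pressure $P(\cO(U), \phi, \theta, \eta_2)$ can exceed $P(\cO(U), \phi, \theta)$, so one cannot bound it by $P(\phi;\Lambda)$ a priori; the trick is that our estimate after the $k$-limit has a left-hand side independent of $\eta_2$, so we may shrink $\eta_2$ freely on the right and invoke Lemma~\ref{l.equalpressure} to close the gap. This is the place where the local maximality of $\Lambda$ enters the proof, since it is only through Lemma~\ref{l.equalpressure} that the pressures on $\cO(U)$ and on $\Lambda$ coincide.
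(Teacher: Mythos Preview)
Your proposal is correct and follows exactly the approach sketched in the paper (``Taking logarithm, sending $k$ to infinity and applying Lemma~\ref{l.equalpressure}''), with the $\eta_2\to 0$ passage and the identification $P(\cO(U),\phi)=P(\phi;\Lambda)$ spelled out carefully. The only minor omission is that Lemma~\ref{l.4.6} inherits the constraint $t_j\ge T_0$ from Lemma~\ref{l.4.3}, so your argument as written covers only $t\ge T_0$; for $0\le t<T_0$ one simply enlarges $C_2$ using boundedness of $\phi$ and compactness.
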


In the next two lemmas, we establish some lower bound for the partition sums on $\cG_1$ and $\cG_0$.

\begin{lemma}{(See \cite[Lemma 4.8]{CT16} and \cite[Lemma 5.7]{PYY21})}\label{l.4.8}
	Let $(\cP_1,\cG_1,\cS_1)$ be a decomposition for $\cD_1\subset \cO(U_1)$ and $(\cP_0,\cG_0,\cS_0)$ be the induced decomposition for $\cD_0\subset \Lambda\times \RR^+$, 
	such that
	\begin{enumerate}
		\item $(\cG_1)^1$ has (tail) specification at scale $\delta$  with shadowing orbit contained in $U$;
		\item the potential $\phi$ has the Bowen property on $\cG_1$ at scale $\vep$; and
		\item $P((\cO(U_1)\setminus\cD_1)\cup [\cP_1]\cup[\cS_1], \phi,\delta,\vep)<P(\phi;\Lambda)$.
	\end{enumerate}
	Then for every $\alpha_1,\alpha_2>0$ and $\gamma\in I_\delta$, there exist $M=M(\gamma,\alpha_1,\alpha_2)\in\RR^+$ and $T_1=T_1(\gamma,\alpha_1,\alpha_2)\in\RR^+$ such that the following holds:
	\begin{itemize}
		\item for any $t\ge T_1$ and $\cC\subset  \Lambda\times\RR^+$ with $\lambda(\cC,2\gamma,2\gamma,t)\ge\alpha_1e^{tP(\phi;\Lambda)}$, we have
		\begin{equation}\label{e.4.8}
			\lambda(\cC\cap(\cG_0)^M,2\gamma,2\gamma,t)\ge(1-\alpha_2)\lambda(\cC,2\gamma,2\gamma,t)\ge c_1e^{tP(\phi;\Lambda)},
		\end{equation}
		where $c_1 = \alpha_1(1-\alpha_2)$.
		
	\end{itemize}
\end{lemma}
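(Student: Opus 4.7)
The plan is to show that $\lambda(\cC\setminus(\cG_0)^M,2\gamma,2\gamma,t)\le\alpha_1\alpha_2 e^{tP(\phi;\Lambda)}$ once $M$ and $T_1$ are large enough. Since the hypothesis $\lambda(\cC,2\gamma,2\gamma,t)\ge\alpha_1 e^{tP(\phi;\Lambda)}$ will then give $\lambda(\cC\setminus(\cG_0)^M,\cdots)\le\alpha_2\lambda(\cC,\cdots)$, splitting a near-maximal $(t,2\gamma)$-separated set $E\subset\cC$ into its restriction $E_1$ to $(\cG_0)^M$ and the remainder $E_2$ yields
$$\lambda(\cC\cap(\cG_0)^M,2\gamma,2\gamma,t)\ge\lambda(\cC,2\gamma,2\gamma,t)-\lambda(\cC\setminus(\cG_0)^M,2\gamma,2\gamma,t)\ge(1-\alpha_2)\lambda(\cC,2\gamma,2\gamma,t),$$
which is \eqref{e.4.8} with $c_1=\alpha_1(1-\alpha_2)$.

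\textbf{Decomposition.} Decompose the bad set as
$$\cC\setminus(\cG_0)^M\subset\big(\cC\cap(\Lambda\times\RR^+\setminus\cD_0)\big)\cup\bigcup_{\max(j_1,j_2)\ge M}\cC_{j_1,j_2},$$
where $\cC_{j_1,j_2}:=\{(x,t)\in\cC\cap\cD_0:\lfloor p(x,t)\rfloor=j_1,\,\lfloor s(x,t)\rfloor=j_2\}$. The first summand is controlled by Remark~\ref{r.gap} together with monotonicity (noting $\delta\le 2\gamma\le\vep$, which holds since $\gamma\in I_\delta$ and $\vep\ge 1000\delta$):
$$\lambda\big(\cC\cap(\Lambda\times\RR^+\setminus\cD_0),2\gamma,2\gamma,t\big)\le\lambda(\Lambda\times\RR^+\setminus\cD_0,\delta,\vep,t)\le C_0 e^{t(P(\phi;\Lambda)-\eta_0)}$$
for some $\eta_0>0$ and all large $t$, which becomes $\le\alpha_1\alpha_2 e^{tP(\phi;\Lambda)}/2$ once $T_1$ is taken large enough.

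\textbf{Three-piece factorization and summing.} For each $(x,t)\in\cC_{j_1,j_2}$, the induced decomposition directly yields $(x,j_1)\in[\cP_0]$ (by taking auxiliary parameters $s=0$, $t_{\mathrm{aux}}=p(x,t)-j_1\in[0,1)$ in the definition of $[\cdot]$), the core $(f_{p(x,t)}x,g(x,t))\in\cG_0\subset(\cG_0)^1$ with $g(x,t)\in(t-j_1-j_2-2,\,t-j_1-j_2]$, and $(f_{t-j_2}x,j_2)\in[\cS_0]$ (with $s=s(x,t)-j_2\in[0,1)$, $t_{\mathrm{aux}}=0$). Additivity of Birkhoff integrals, after absorbing the two seams of combined length $<2$ into a constant $2\|\phi\|$, produces
$$\Phi_{2\gamma}(x,t)\le\Phi_{2\gamma}(x,j_1)+\Phi_{2\gamma}(f_{p(x,t)}x,g(x,t))+\Phi_{2\gamma}(f_{t-j_2}x,j_2)+2\|\phi\|.$$
Refining $\cC_{j_1,j_2}$ further by $\lfloor g(x,t)\rfloor$ into finitely many sub-cases with fixed core length (up to $\pm 1$), and taking maximal $(\cdot,\gamma)$-separated subsets $F_P,F_C,F_S$ of $[\cP_0]_{j_1}$, $((\cG_0)^1)_{g(x,t)}$, and $[\cS_0]_{j_2}$, we code each $x$ by the nearest triple in $F_P\times F_C\times F_S$. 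Injectivity of this coding follows because two $(t,2\gamma)$-separated points must separate beyond $2\gamma$ on at least one of the three sub-intervals, so they cannot share a single $\gamma$-code. After a scale-shift from $2\gamma$ to $3\gamma\le 600\delta<\vep$ in the Bowen balls, Proposition~\ref{p.4.7} bounds the middle factor by $C_2 e^{g(x,t)P(\phi;\Lambda)}$ and Remark~\ref{r.gap} bounds the two outer factors by $C_3 e^{j_i(P(\phi;\Lambda)-\eta)}$ for some $\eta>0$ once $j_i$ exceeds a threshold, giving $\lambda(\cC_{j_1,j_2},2\gamma,2\gamma,t)\le C''e^{tP(\phi;\Lambda)}e^{-\eta(j_1+j_2)}$. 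Summing over $\max(j_1,j_2)\ge M$ produces an exponentially small factor $e^{-\eta M}$, making the total $\le\alpha_1\alpha_2 e^{tP(\phi;\Lambda)}/2$ for $M$ large. The main obstacle is precisely this three-piece factorization: verifying injectivity of the coding despite the non-integer seams at the splits, correctly refining by $\lfloor g(x,t)\rfloor$ to handle the variation of the core length across $x$, and confirming that the scale inflation $2\gamma\mapsto 3\gamma$ stays within the Bowen validity range, all of which rely on the quantitative scale gap $\vep\ge 1000\delta$ built into the hypotheses of Theorem~\ref{m.1}.
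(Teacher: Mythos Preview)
Your outline is correct and is precisely the argument the paper points to: the paper omits the proof and refers to \cite[Lemma~4.8]{CT16}, after recording that Proposition~\ref{p.4.7} supplies the upper bound $\lambda((\cG_0)^1,\gamma,\eta,t)\le C_2e^{tP(\phi;\Lambda)}$ for the middle factor and that Remark~\ref{r.gap} supplies the pressure gap $P((\Lambda\times\RR^+\setminus\cD_0)\cup[\cP_0]\cup[\cS_0],\phi,\delta,\vep)<P(\phi;\Lambda)$ for the outer factors and the non-$\cD_0$ part. The only cosmetic difference is that the \cite{CT16} argument splits each $(x,t)\in\cC_{j_1,j_2}$ at the \emph{fixed} integer times $j_1$ and $t-j_2$, so the middle piece has constant length $t-j_1-j_2$ (with residual prefix $p-j_1<1$ and suffix $s-j_2<1$, hence landing in $(\cG_0)^1$); this makes the injectivity of the triple coding immediate and avoids both your extra refinement by $\lfloor g\rfloor$ and the appeal to the inclusion $\cG_0\subset(\cG_0)^1$, which is not a formal consequence of the definition of $\cG^M$.
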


The proof is omitted. We make the following observation: 
\begin{enumerate}
	\item $(\cG_1)^1$ having (tail) specification at scale $\delta$  with shadowing orbit contained in $U$, together with the Bowen property on $\cG_1$, imply that  (Proposition \ref{p.4.7}) 
	$$
	\lambda((\cG_0)^1, \gamma,\eta,t)\le C_2 e^{tP(\phi;\Lambda)}.
	$$
	\item $P((\cO(U_1)\setminus\cD_1)\cup [\cP_1]\cup[\cS_1], \phi,\delta,\vep)<P(\phi;\Lambda)$ implies that (Remark \ref{r.gap})
	$$
	P((\Lambda\times\RR^+\setminus\cD_0)\cup [\cP_0]\cup[\cS_0], \phi,\delta,\vep)<P(\phi;\Lambda).
	$$
\end{enumerate}
Then the proof follows verbatim the proof of \cite[Lemma 4.8]{CT16}.

Following the same argument, we get a version of Lemma \ref{l.4.8} for the partition sum of $\cG_1$. Here monotonicity is not enough because this time we have $\cC\subset \cO(U_1)$ instead of $\Lambda\times\RR^+.$
\begin{lemma}\label{l.4.8'}
	Let $(\cP_1,\cG_1,\cS_1)$ be a decomposition for $\cD_1\subset \cO(U_1)$ % and $(\cP_0,\cG_0,\cS_0)$ be the induced decomposition for $\cD_0\subset \Lambda\times \RR^+$ 
	such that
	\begin{enumerate}
		\item $ (\cG_1)^1$ has (tail) specification at scale $\delta$   with shadowing orbit contained in $U$;
		\item the potential $\phi$ has the Bowen property on $ \cG_1$ at scale $\vep$; and
		\item $P(\cO(U_1)\setminus\cD_1)\cup [\cP_1]\cup[\cS_1], \phi,\delta,\vep)<P(\phi;\Lambda)$.
	\end{enumerate}
	Then for every $\alpha_1,\alpha_2>0$ and $\gamma\in I_\delta$, there exist $M=M(\gamma,\alpha_1,\alpha_2)\in\RR^+$ and $T_1=T_1(\gamma,\alpha_1,\alpha_2)\in\RR^+$ such that the following holds:
	\begin{itemize}
		\item for any $t\ge T_1$ and $ \cC\subset  \cO(U_1)$ with $\lambda(\cC,2\gamma,2\gamma,t)\ge\alpha_1e^{tP(\phi;\Lambda)}$, we have
		\begin{equation}\label{e.4.8'}
			\lambda(\cC\cap(\cG_1)^M,2\gamma,2\gamma,t)\ge(1-\alpha_2)\lambda(\cC,2\gamma,2\gamma,t)\ge c_1e^{tP(\phi;\Lambda)},
		\end{equation}
		where $c_1 = \alpha_1(1-\alpha_2)$.
	\end{itemize}
\end{lemma}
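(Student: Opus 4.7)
The plan is to follow the proof of Lemma \ref{l.4.8} (equivalently \cite[Lemma 4.8]{CT16} and \cite[Lemma 5.7]{PYY21}) verbatim, with $\cG_0$ replaced by $\cG_1$ and $\Lambda\times\RR^+$ replaced by $\cO(U_1)$. The two structural inputs used in that argument remain available in this broader setting: Proposition \ref{p.4.7} already supplies the upper bound $\lambda((\cG_1)^1,\gamma,\eta,t)\le C_2 e^{tP(\phi;\Lambda)}$ on $(\cG_1)^1$ itself (and not merely on $(\cG_0)^1$), while hypothesis (3) of the present lemma states the pressure gap directly on $\cO(U_1)$, so one does not need the auxiliary step from Remark \ref{r.gap} that was required in the $\cG_0$-version.

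The skeleton of the argument is as follows. Decompose $\cC$ as $(\cC\cap (\cG_1)^M)\sqcup \cC^{\mathrm{bad}}_M$, where $(x,t)\in\cC^{\mathrm{bad}}_M$ means either $(x,t)\in\cO(U_1)\setminus\cD_1$, or $(x,t)\in\cD_1$ with $\max(p(x,t),s(x,t))>M$. For those segments that lie in $\cD_1$, use the decomposition $t=p+g+s$ with $(x,p)\in\cP_1$, $(f_p x,g)\in\cG_1$, and $(f_{p+g}x,s)\in\cS_1$, stratify a $(t,2\gamma)$-separated set according to the pair $(p,s)$, and write the partition sum on each stratum as a product of partition sums on the three pieces (up to a bounded Bowen-property constant coming from $\cG_1$).

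The core estimate then pits Proposition \ref{p.4.7} (giving $C_2 e^{gP(\phi;\Lambda)}$ on the $\cG_1$ piece) against hypothesis (3), which provides some $\rho>0$ so that the partition sums of $[\cP_1]$, $[\cS_1]$, and $\cO(U_1)\setminus\cD_1$ at scales $(\delta,\vep)$ are bounded by $Ce^{u(P(\phi;\Lambda)-\rho)}$, with $u$ the length of the piece. Summing over all $(p,s)$ with $\max(p,s)>M$ produces a geometric series in $e^{-\rho M}$, so choosing $M$ and $T_1$ sufficiently large (in terms of $\alpha_1,\alpha_2,\gamma$) yields $\lambda(\cC^{\mathrm{bad}}_M,2\gamma,2\gamma,t)\le \alpha_2\lambda(\cC,2\gamma,2\gamma,t)$. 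Combined with the hypothesis $\lambda(\cC,2\gamma,2\gamma,t)\ge\alpha_1 e^{tP(\phi;\Lambda)}$, this gives the stated lower bound on $\lambda(\cC\cap (\cG_1)^M,2\gamma,2\gamma,t)$.

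The main obstacle, inherited unchanged from \cite{CT16}, is the scale bookkeeping: when a $(t,2\gamma)$-separated set is split at the two interior times $p$ and $p+g$, one must match the outer Bowen metric at scale $2\gamma$ to the inner scales used on the three factors, namely $(\gamma,\gamma)$ on the $\cG_1$-core (so that Proposition \ref{p.4.7} applies) and $(\delta,\vep)$ on the prefix and suffix (so that hypothesis (3) applies). This is exactly why one takes $\gamma\in I_\delta=[8\delta,200\delta]$ and uses both the doubling trick of Lemma \ref{l.4.1} and the monotonicity of $\lambda$ in each scale. Once these scale matches are set up, the estimate closes as in the original argument.
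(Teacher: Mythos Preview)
Your proposal is correct and matches the paper's approach exactly: the paper likewise states that one follows the same argument as Lemma \ref{l.4.8} (i.e., \cite[Lemma 4.8]{CT16}) with $\cG_0$ replaced by $\cG_1$ and $\Lambda\times\RR^+$ replaced by $\cO(U_1)$, noting that the needed inputs are precisely Proposition \ref{p.4.7} for the $(\cG_1)^1$ upper bound and hypothesis (3) for the pressure gap on $\cO(U_1)$ directly. Your sketch simply makes explicit the standard \cite{CT16} stratify-and-sum argument that the paper leaves implicit.
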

Here we assume W.L.O.G that the constants $M,T_1$ given in Lemma \ref{l.4.8} and \ref{l.4.8'} are the same. There is no restriction to assume that $T_1 > 2M$. We will do so for the rest of this paper.

The following lemmas are direct consequences of Lemma \ref{l.4.8} and \ref{l.4.8'}. %Notice that when we take the whole orbit collections $\Lambda\times\RR^+,$ we could let $\alpha_1 = 1$ due to Lemma \ref{l.4.2}. 

\begin{lemma}\label{l.4.9}(See \cite[Lemma 4.9]{CT16} and \cite[Lemma 5.9]{PYY21})
	Assume that $(\cG_1)^1$ has tail specification at scale $\delta$  with shadowing orbit contained in $U$, $P_{\exp}^\perp (\phi,\vep)<P(\phi;\Lambda)$, and $\phi$ has the Bowen property  on $\cG_1$ at scale $\vep$. Then for every $\gamma \in I_\delta$ and $\alpha_2>0$, the constants $M\in\NN$ and $T_1\in\RR^+$ given by Lemma \ref{l.4.8} with $\alpha_1=1$ satisfies that for every $t\ge T_1$,
	$$
	\lambda((\cG_0)^M, 2\gamma,2\gamma,t)\ge (1-\alpha_2)\lambda( \Lambda\times\RR^+,2\gamma,2\gamma,t)\ge (1-\alpha_2)e^{tP(\phi;\Lambda)}.
	$$
\end{lemma}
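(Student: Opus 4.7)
The plan is to read Lemma~\ref{l.4.9} as an immediate specialization of Lemma~\ref{l.4.8} to the collection $\cC := \Lambda\times\RR^+$ with the parameter choice $\alpha_1 = 1$. With this choice, the constants $M(\gamma,1,\alpha_2)$ and $T_1(\gamma,1,\alpha_2)$ supplied by Lemma~\ref{l.4.8} are exactly the constants claimed in the statement, and the conclusion \eqref{e.4.8} reads, for every $t \ge T_1$,
$$
\lambda\bigl((\cG_0)^M \cap (\Lambda\times\RR^+),\, 2\gamma, 2\gamma, t\bigr) \ge (1-\alpha_2)\,\lambda(\Lambda\times\RR^+,\, 2\gamma, 2\gamma, t).
$$
Since $\cG_0 \subset \Lambda\times\RR^+$ by construction, the intersection on the left collapses to $(\cG_0)^M$, which is precisely the first inequality of the lemma.

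What remains is to verify the hypothesis required to apply Lemma~\ref{l.4.8} with $\alpha_1 = 1$, namely the lower bound
$$
\lambda(\Lambda\times\RR^+,\, 2\gamma, 2\gamma, t) \ge e^{tP(\phi;\Lambda)},
$$
which simultaneously delivers the second inequality in the statement. For this I would invoke Lemma~\ref{l.4.2} applied at the scale $2\gamma$ in place of $\gamma$. The assumption $\gamma \in I_\delta = [8\delta, 200\delta]$ forces $2\gamma \le 400\delta$, well below the expansivity threshold $\vep = 1000\delta$. Because the set $\operatorname{NE}(\cdot;\Lambda)$ is monotone in the scale (smaller scale gives a smaller non-expansive set), the functional $P_{\exp}^\perp(\phi,\cdot;\Lambda)$ is monotone non-decreasing in its scale argument, so $P_{\exp}^\perp(\phi,2\gamma;\Lambda) \le P_{\exp}^\perp(\phi,\vep;\Lambda) < P(\phi;\Lambda)$. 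Hence the expansivity pressure-gap hypothesis feeding Lemma~\ref{l.4.2} is available at scale $2\gamma$, and the conclusion of that lemma delivers exactly the bound we need.

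There is no real obstacle here: Lemma~\ref{l.4.9} is pure bookkeeping once Lemmas~\ref{l.4.2} and~\ref{l.4.8} are granted. The only point that requires any care is checking that the doubled scale $2\gamma$ is still admissible in Lemma~\ref{l.4.2}, which is guaranteed by the specific numerics of the interval $I_\delta = [8\delta,200\delta]$ relative to the expansivity scale $\vep = 1000\delta$ built into \eqref{e.I}.
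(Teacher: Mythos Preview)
Your proposal is correct and follows exactly the paper's approach: apply Lemma~\ref{l.4.8} with $\cC=\Lambda\times\RR^+$ and $\alpha_1=1$, using Lemma~\ref{l.4.2} to verify the lower-bound hypothesis. The paper's own proof is the one-line ``Apply Lemma~\ref{l.4.8} with $\cC = \Lambda\times\RR^+$ and $\alpha_1 = 1$,'' leaving the hypothesis check implicit; your added discussion of why Lemma~\ref{l.4.2} still applies at scale $2\gamma$ (since $2\gamma\le 400\delta<\vep$) is a welcome clarification of a point the paper glosses over.
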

\begin{proof}
%    Note that $$
%    P(\Lambda\times\RR^+\setminus\cD_0)\cup [\cP_0]\cup[\cS_0], \phi,\delta,\vep)\le P(\cO(U_1)\setminus\cD_1)\cup [\cP_1]\cup[\cS_1], \phi,\delta,\vep).
%    $$
    Apply Lemma \ref{l.4.8} with $\cC = \Lambda\times\RR^+$ and  $\alpha_1 = 1$. 
\end{proof}
\begin{lemma}\label{l.4.10}(See \cite[Proposition 4.10]{CT16} and \cite[Lemma 5.10]{PYY21})
	Let $\vep,\delta,\gamma$ be as in Lemma \ref{l.4.9}, and $M,T_1$ be given by Lemma \ref{l.4.8} with $\alpha_1=1$ and $\alpha_2=\frac12$. Then there exists $L_1 = L_1(\gamma)\in\RR^+$ such that for $t\ge T_1$, 
	$$
	\lambda((\cG_0)^M,2\gamma,t)\ge e^{-L_1}e^{tP(\phi;\Lambda)}.
	$$
	As a consequence, $\lambda(\Lambda\times\RR^+,2\gamma,t)\ge e^{-L_1}e^{tP(\phi;\Lambda)} $ for $t\ge T_1$.
	
	 Since $\cO(U_1)\supset \Lambda\times\RR^+$ we get
	$$
	\lambda(\cO(U_1),2\gamma,t)\ge e^{-L_1}e^{tP(\phi;\Lambda)}.
	$$
\end{lemma}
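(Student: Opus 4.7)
The plan is to upgrade the two-scale lower bound from Lemma~\ref{l.4.9} to a single-scale bound by absorbing the inner scale $\vep = 2\gamma$ into a multiplicative constant via the Bowen property. Since $2\gamma \le 400\delta < \vep$ and $(\cG_0)^M \subset (\cG_1)^M$, the potential $\phi$ has the Bowen property on $(\cG_0)^M$ at scale $\vep$ with the inflated distortion constant $K(M) := K + 2M\,\Var(\phi, \vep)$ (as recorded right after the definition of the Bowen property). Consequently, for every $(x,t) \in (\cG_0)^M$ and every $y \in B_t(x, 2\gamma)$ one has $\left|\int_0^t \phi(f_s y)\,ds - \int_0^t \phi(f_s x)\,ds\right| \le K(M)$, so taking the supremum over $y$ yields
$$\Phi_{2\gamma}(x,t) \le \Phi_0(x,t) + K(M).$$

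Next I would compare the two partition sums directly. For any $(t, 2\gamma)$-separated set $E \subset ((\cG_0)^M)_t$, the pointwise inequality above gives
$$\sum_{x\in E} e^{\Phi_{2\gamma}(x,t)} \le e^{K(M)} \sum_{x\in E} e^{\Phi_0(x,t)} \le e^{K(M)} \lambda((\cG_0)^M, 2\gamma, t),$$
and taking the supremum over all such $E$ on the left produces
$$\lambda((\cG_0)^M, 2\gamma, 2\gamma, t) \le e^{K(M)}\,\lambda((\cG_0)^M, 2\gamma, t).$$
Combining this with the lower bound from Lemma~\ref{l.4.9} (applied with $\alpha_1 = 1$ and $\alpha_2 = 1/2$) yields, for every $t \ge T_1$,
$$\lambda((\cG_0)^M, 2\gamma, t) \ge \tfrac{1}{2}\, e^{-K(M)}\, e^{tP(\phi;\Lambda)},$$
so setting $L_1 := K(M) + \log 2$ gives the desired inequality.

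The two ``as a consequence'' assertions follow at once from monotonicity (item (3) of the monotonicity remark): since $(\cG_0)^M \subset \Lambda \times \RR^+ \subset \cO(U_1)$, the one-scale partition sum $\lambda(\,\cdot\,, 2\gamma, t)$ is non-decreasing under enlarging the collection, so the bound transfers verbatim to $\Lambda\times\RR^+$ and then to $\cO(U_1)$. There is essentially no obstacle here; the statement is a routine corollary of Lemma~\ref{l.4.9} combined with the Bowen property on $(\cG_0)^M$. The only point requiring a moment of care is that the relevant Bowen distortion constant on $(\cG_0)^M$ is $K(M)$ rather than $K$, reflecting the bounded contribution of the prefix and suffix portions (of length at most $M$ each) to the Birkhoff integral.
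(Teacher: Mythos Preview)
Your proof is correct and follows essentially the same approach as the paper: both use the Bowen property on $(\cG_0)^M$ with distortion constant $K(M)=K+2M\Var(\phi,\vep)$ to pass from the two-scale bound of Lemma~\ref{l.4.9} to the single-scale bound, arriving at $L_1=K(M)+\log 2$, and then invoke monotonicity of the partition sum under inclusion for the two consequences.
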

\begin{proof}
    Using the previous lemma and the Bowen property on $(\cG_0)^M$ with distortion constant $K(M) = K+2 M \operatorname{Var}(\phi, \varepsilon),$ we have 
\begin{align*}
    \lambda((\cG_0)^M,2\gamma,t)\ge &\,e^{-K(M)}\lambda((\cG_0)^M,2\gamma,2\gamma,t) \\
    \ge&\, \frac{1}{2}e^{-K(M)}e^{tP(\phi;\Lambda)} \\
    = &\, e^{-L_1}e^{tP(\phi;\Lambda)}.
\end{align*} 
\end{proof}

Below, we try to reproduce all the estimates in \cite[Section 6]{PYY21}. One key difference between \cite{CT16} and \cite{PYY21} is that the latter does not assume the specification on $\cG^M$. Therefore, we need to enhance Lemma \ref{l.4.8} and \ref{l.4.8'}.

As in Lemma \ref{l.4.10}, we take $\alpha_2 = \frac12$ and write 
\begin{equation}\label{e.MT}
	M(\gamma, \alpha_1)=M(\gamma, \alpha_1,\frac12) \mbox{ and } T_1 (\gamma, \alpha_1)= T_1(\gamma, \alpha_1,\frac12).
\end{equation} 
Given $\cC\subset \cO(U)$ and $i,j\in\NN$, we write
\begin{equation}\label{e.fC}
	f_{i,j}(\cC) = \{(f_i(x),t-(i+j)):(x,t)\in\cC,  t\ge i+j\}.
\end{equation}
Note that if $\cC\subset \cO(U_1)$ then $f_{i,j}(\cC)\subset \cO(U_1)$. Similarly, if $\cC\subset \Lambda\times\RR^+$ then $f_{i,j}(\cC)\subset\Lambda\times\RR^+$. %The idea is that, in the following lemma we move $\cC$ backward and forward to take the intersection of $\cG$ instead of enlarging $\cG$, 

\begin{lemma}\label{l.key}(See \cite[Lemma 6.2]{PYY21})
	Let $(\cP_1,\cG_1,\cS_1)$ be a decomposition for $\cD_1\subset \cO(U_1)$ such that
	\begin{enumerate}
		\item $(\cG_1)^1$ has (tail) specification at scale $\delta$  with shadowing orbit contained in $U$;
		\item the potential $\phi$ has the Bowen property on $\cG_1$ at scale $\vep$; and
		\item  $P((\cO(U_1)\setminus\cD_1)\cup [\cP_1]\cup[\cS_1], \phi,\delta,\vep)<P(\phi;\Lambda)$.
	\end{enumerate}
	For every $\alpha_1>0$ and $\gamma\in I_\delta$, let $M = M(\gamma,\alpha_1), T_1 = T_1(\gamma,\alpha_1) $ be the constants given by~\eqref{e.MT}, i.e.,  from Lemma \ref{l.4.8} with $\alpha_2=\frac12$. Then there exists $L = L(\gamma,\alpha_1)$, such that the following statement holds:
	\begin{itemize}
		\item 
		for every $t\ge T_1$ and $ \cC\subset \Lambda\times\RR^+$ 
		with $\lambda(\cC,2\gamma,2\gamma,t)\ge\alpha_1e^{tP(\phi;\Lambda)}$, there exist $t_p,t_s\in[0,[M]]\cap \NN$, such that, with $T(t) = t-(t_p+t_s)$,
		\begin{equation}\label{e.key}
			\lambda\left(f_{t_p,t_s}(\cC)\cap (\cG_0)^1,{\gamma,3\gamma},T(t)\right)\ge L \cdot \lambda(\cC, 2\gamma,2\gamma,t),
		\end{equation}
		where $f_{t_p,t_s}(\cC)$ is defined by~\eqref{e.fC}.
	\end{itemize}
\end{lemma}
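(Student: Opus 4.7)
The plan is to deduce Lemma~\ref{l.key} from Lemma~\ref{l.4.8} via a pigeonhole argument on the prefix/suffix lengths, followed by a multiplicity bound that absorbs the cost of shifting the starting point.

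First, I would apply Lemma~\ref{l.4.8} with the given $\alpha_1$ and $\alpha_2=\tfrac12$, producing constants $M=M(\gamma,\alpha_1)$ and $T_1=T_1(\gamma,\alpha_1)$ such that, for $t\ge T_1$,
$$
\lambda(\cC\cap(\cG_0)^M,2\gamma,2\gamma,t)\ge \tfrac12\,\lambda(\cC,2\gamma,2\gamma,t).
$$
Let $E\subset(\cC\cap(\cG_0)^M)_t$ be a $(t,2\gamma)$-separated set whose partition sum is within a factor $\tfrac12$ of the supremum. For each $x\in E$, the decomposition data satisfies $p(x,t)\le M$ and $s(x,t)\le M$; set $t_p(x):=\lceil p(x,t)\rceil$ and $t_s(x):=\lceil s(x,t)\rceil$, both in $\{0,1,\dots,\lceil M\rceil\}$. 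Partition $E$ into at most $(\lceil M\rceil+1)^2$ subclasses by the value of $(t_p(x),t_s(x))$; by pigeonhole, one subclass $E^\ast$, corresponding to a fixed pair $(t_p,t_s)$, has partition sum at least $(\lceil M\rceil+1)^{-2}$ times the total.

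Next I would check that shifting lands us in $(\cG_0)^1$. For each $x\in E^\ast$, the orbit segment $(f_{t_p}x,T(t))$ inherits a decomposition from $(x,t)$: the trimmed prefix has length $t_p-p(x,t)\in[0,1)$, the trimmed suffix has length $t_s-s(x,t)\in[0,1)$, and the middle piece is a sub-segment of the original $\cG_0$-part. Hence $(f_{t_p}x,T(t))\in f_{t_p,t_s}(\cC)\cap(\cG_0)^1$. To produce a separated set at the target scale, extract a maximal $(T(t),\gamma)$-separated subset $E'\subset\{f_{t_p}x:x\in E^\ast\}$. The key multiplicity estimate is: if $x_1,x_2\in E^\ast$ project into the same $(T(t),\gamma)$-ball, then $d_{T(t)}(f_{t_p}x_1,f_{t_p}x_2)<2\gamma$, so the $2\gamma$-separation $d_t(x_1,x_2)>2\gamma$ must be witnessed on the prefix $[0,t_p]$ or the suffix $[t-t_s,t]$, both of length $\le\lceil M\rceil$. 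Since $\bM$ is compact, the maximal size $N(M,\gamma)$ of a $(\lceil M\rceil,\gamma)$-separated set of $\bM$ is finite, so the multiplicity of the projection is bounded by $C_M:=N(M,\gamma)^2$, depending only on $M$ and $\gamma$.

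Finally, I would relate the potentials at the two scales. Any $y\in B_t(x,2\gamma)$ gives $f_{t_p}y\in B_{T(t)}(f_{t_p}x,2\gamma)\subset B_{T(t)}(f_{t_p}x,3\gamma)$, so
$$
\Phi_{3\gamma}(f_{t_p}x,T(t))\ge \Phi_{2\gamma}(x,t)-2M\|\phi\|.
$$
Chaining the estimates (multiplicity, pigeonhole, Lemma~\ref{l.4.8}, and this potential comparison) yields
$$
\lambda\!\left(f_{t_p,t_s}(\cC)\cap(\cG_0)^1,\gamma,3\gamma,T(t)\right)\ge L\cdot \lambda(\cC,2\gamma,2\gamma,t),
$$
with $L=\dfrac{e^{-2M\|\phi\|}}{2\,C_M\,(\lceil M\rceil+1)^2}$ depending only on $\gamma$ and $\alpha_1$, as required.

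The main obstacle is the multiplicity bound in the second paragraph: one must argue that a $(t,2\gamma)$-separated set collapses by at most a constant factor under the projection $x\mapsto f_{t_p}x$, and this works only because the discarded prefix and suffix have length bounded by a fixed constant $\lceil M\rceil$. If instead $M$ were allowed to grow with $t$, the multiplicity $C_M$ would blow up exponentially and no uniform $L$ would exist; this is precisely where the ``$(\cG_0)^M$ at bounded $M$'' conclusion of Lemma~\ref{l.4.8} is essential.
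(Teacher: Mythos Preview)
Your argument is correct and follows exactly the route the paper indicates: apply Lemma~\ref{l.4.8} with $\alpha_2=\tfrac12$, pigeonhole on the integer parts of $(p,s)$, then bound the multiplicity of the projection $x\mapsto f_{t_p}x$ by a constant depending only on $M$ and $\gamma$ (this is precisely the content of \cite[Lemma~6.2]{PYY21}, to which the paper defers). The one place to tighten is the sentence asserting $(f_{t_p}x,T(t))\in(\cG_0)^1$: with $t_p=\lceil p\rceil$ the new segment is a proper sub-segment of the original $\cG_0$-piece (not one carrying short $\cP_0$/$\cS_0$ tails as your wording suggests), and membership in $(\cG_0)^1=\{(y,r)\in\cD_0:p(y,r)\le1,\ s(y,r)\le1\}$ is not literally automatic from the abstract definition of a decomposition, since $p,g,s$ are only defined on $\cD_0$; you are tacitly enlarging $\cD_0$ and extending $(p,g,s)$ to such trims, which is the standard harmless convention in \cite{PYY21} but should be acknowledged.
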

The proof is a verbatim copy of \cite[Lemma 6.2]{PYY21} using the pigeon hole principle and is omitted.

Similarly, using Lemma \ref{l.4.8'} we obtain a version of Lemma \ref{l.key} for $\cG_1$ with $\cC\subset \cO(U_1)$.
\begin{lemma}\label{l.key1}
	Let $(\cP_1,\cG_1,\cS_1)$ be a decomposition for $\cD_1\subset  \cO(U_1)$ such that
	\begin{enumerate}
		\item $(\cG_1)^1$ has (tail) specification at scale $\delta$  with shadowing orbit contained in $U$;
		\item the potential $\phi$ has the Bowen property on $\cG_1$ at scale $\vep$; and
		\item $P((\cO(U_1)\setminus\cD_1)\cup [\cP_1]\cup[\cS_1], \phi,\delta,\vep)<P(\phi;\Lambda)$.
	\end{enumerate}
	For every $\alpha_1>0$ and $\gamma\in I_\delta$, let $M = M(\gamma,\alpha_1), T_1 = T_1(\gamma,\alpha_1) $ be the constants given by~\eqref{e.MT}, i.e.,  from Lemma \ref{l.4.8} with $\alpha_2=\frac12$. Then there exists $L = L(\gamma,\alpha_1)$, such that the following statement holds:
	\begin{itemize}
		\item 
		for every $t\ge T_1$ and $ \cC\subset \cO(U_1)$ 
		with $\lambda(\cC,2\gamma,2\gamma,t)\ge\alpha_1e^{tP(\phi;\Lambda)}$, there exist $t_p,t_s\in[0,[M]]\cap \NN$, such that, with $T(t) = t-(t_p+t_s)$,
		\begin{equation}\label{e.key1}
			\lambda\left(f_{t_p,t_s}(\cC)\cap (\cG_1)^1,{\gamma,3\gamma},T(t)\right)\ge L \cdot \lambda(\cC, 2\gamma,2\gamma,t),
		\end{equation}
		where $f_{t_p,t_s}(\cC)$ is defined by~\eqref{e.fC}.
	\end{itemize}
\end{lemma}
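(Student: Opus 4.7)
The plan is to follow the proof of Lemma~\ref{l.key} almost verbatim, replacing $\Lambda\times\RR^+$ by $\cO(U_1)$ as the ambient collection and $(\cG_0)^M, (\cG_0)^1$ by $(\cG_1)^M, (\cG_1)^1$, and invoking Lemma~\ref{l.4.8'} in place of Lemma~\ref{l.4.8}. The only new ambient fact required is that $\cO(U_1)$ is closed under the operation $f_{i,j}$, which is immediate from the definition of $\cO(U_1)$: if $(x,t) \in \cO(U_1)$ and $0 \le i \le i+j \le t$, then $(f_i x, t - i - j) \in \cO(U_1)$.

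Concretely, I would first apply Lemma~\ref{l.4.8'} with $\alpha_2 = \frac12$ to the hypothesis $\lambda(\cC,2\gamma,2\gamma,t) \ge \alpha_1 e^{tP(\phi;\Lambda)}$ to obtain
$$\lambda(\cC \cap (\cG_1)^M, 2\gamma,2\gamma,t) \ge \tfrac12 \lambda(\cC,2\gamma,2\gamma,t).$$
Fix a $(t,2\gamma)$-separated set $E \subset (\cC \cap (\cG_1)^M)_t$ nearly realizing the left-hand side with potential $\Phi_{2\gamma}$. For each $x \in E$ the decomposition supplies $p(x,t), s(x,t) \in [0,M]$; index $x$ by the integer pair $(\lfloor p(x,t)\rfloor, \lfloor s(x,t)\rfloor) \in \{0,\ldots,[M]\}^2$. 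Pigeonholing over these $([M]+1)^2$ indices yields integers $t_p,t_s \in [0,[M]]$ and a sub-collection $E^* \subset E$, all sharing this index, whose total $\Phi_{2\gamma}$-mass is at least $([M]+1)^{-2}$ times that of $E$.

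For each $x \in E^*$, the shifted segment $(f_{t_p}x, T(t))$ has residual prefix length $p(x,t)-t_p \in [0,1)$ and residual suffix length $s(x,t)-t_s \in [0,1)$, so it lies in $(\cG_1)^1$, and $f_{t_p}(E^*) \subset (f_{t_p,t_s}(\cC) \cap (\cG_1)^1)_{T(t)}$. The two remaining verifications are: (i) $f_{t_p}(E^*)$ is $(T(t),\gamma)$-separated, where the reduction from outer scale $2\gamma$ to $\gamma$ leaves room to absorb the effect of shifting the base point by at most $M$; and (ii) $\Phi_{3\gamma}(f_{t_p}x,T(t)) \ge \Phi_{2\gamma}(x,t) - 2M\|\phi\|$, which holds because $\int_0^{T(t)} \phi(f_u f_{t_p}x)\,du$ differs from $\int_0^t \phi(f_u x)\,du$ by at most $2M\|\phi\|$ and the enlarged inner scale $3\gamma$ only increases the sup defining $\Phi$. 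Putting these together,
$$\lambda\bigl(f_{t_p,t_s}(\cC) \cap (\cG_1)^1, \gamma, 3\gamma, T(t)\bigr) \;\ge\; \frac{e^{-2M\|\phi\|}}{2([M]+1)^2}\,\lambda(\cC,2\gamma,2\gamma,t),$$
yielding the required $L = L(\gamma,\alpha_1)$.

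The main (and essentially the only) obstacle is that the pigeonhole/shift manipulations developed in \cite[Lemma 6.2]{PYY21} for $\Lambda\times\RR^+$ must still go through when the ambient collection is $\cO(U_1)$. This reduces to the two closure properties noted above: $f_{t_p,t_s}$ maps $\cO(U_1)$ into $\cO(U_1)$, and Lemma~\ref{l.4.8'} supplies the needed $(\cG_1)^M$-mass lower bound for $\cC \subset \cO(U_1)$. With both in hand, the remainder of the argument is a direct transcription of \cite{PYY21}.
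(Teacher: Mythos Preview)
Your proposal is correct and matches the paper's approach exactly: the paper states that Lemma~\ref{l.key1} follows ``similarly, using Lemma~\ref{l.4.8'}'' from the proof of Lemma~\ref{l.key}, which is itself a verbatim copy of \cite[Lemma 6.2]{PYY21} via pigeonholing on the integer parts of $p,s$. The closure of $\cO(U_1)$ under $f_{i,j}$ that you flag as the one new ambient fact is precisely what the paper records immediately after~\eqref{e.fC}.
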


Next we improve the lower bound on $(\cG_0)^M$ (Lemma \ref{l.4.10}) to a lower bound on $(\cG_0)^1$.

\begin{lemma}\label{l.cor1}(See \cite[Lemma 6.5]{PYY21})
	Let $\vep,\delta,\gamma$ be as in Lemma \ref{l.key}, and let $M,T_1$ be given by~\eqref{e.MT} with $\alpha_1=1$. Then there exists a  constant $L_2 = L_2(\gamma)\in\RR^+$ such that for every $t\ge T_1$, there exist $T(t)\in[t-2M,t]$ and $L_3\in\RR^+$ such that
	\begin{equation}\label{e.4.5.1}
		\lambda((\cG_0)^1,\gamma,3\gamma,T(t))\ge e^{-L_2} e^{T(t)P(\phi;\Lambda)},
	\end{equation}
	\begin{equation}\label{e.4.5.2}
		\lambda((\cG_0)^1,\gamma,T(t))\ge e^{-L_3} e^{T(t)P(\phi;\Lambda)}. 
	\end{equation}
%    Furthermore, taking a $(\gamma, T(t))$-separate set $E_{T(t)}$ such that
%    $$\lambda((\cG_0)^1,\gamma,T(t)) - \sum_{x \in E_{T(t)}}e^{\Phi_{0}(x, T(t))} < \frac{1}{2}e^{-L_3} e^{T(t)P(\phi;\Lambda)}  ,$$ 
%    we have
%    \begin{equation}\label{e.4.5.3}
%		\sum_{x \in E_{T(t)}}e^{\Phi_{0}(x, T(t))} \geq  \frac{1}{2}e^{-L_3} e^{T(t)P(\phi;\Lambda)}. 
%	\end{equation}
\end{lemma}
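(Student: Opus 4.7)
The plan is to specialize Lemma~\ref{l.key} to $\cC = \Lambda\times\RR^+$ with $\alpha_1 = 1$. The required hypothesis $\lambda(\Lambda\times\RR^+, 2\gamma, 2\gamma, t) \ge e^{tP(\phi;\Lambda)}$ is exactly the lower bound already recorded inside Lemma~\ref{l.4.9} (a direct consequence of Lemma~\ref{l.4.2}). Applying Lemma~\ref{l.key} then yields integers $t_p, t_s \in [0, [M]]$ with $T(t) := t - (t_p + t_s) \in [t-2M, t]$ and
$$
\lambda\bigl(f_{t_p, t_s}(\Lambda\times\RR^+) \cap (\cG_0)^1,\, \gamma,\, 3\gamma,\, T(t)\bigr) \,\ge\, L\, \lambda(\Lambda\times\RR^+, 2\gamma, 2\gamma, t) \,\ge\, L\, e^{tP(\phi;\Lambda)}.
$$
Because $\Lambda$ is $f$-invariant, $f_{t_p, t_s}(\Lambda\times\RR^+) \subset \Lambda\times\RR^+$, so the intersection is a subcollection of $(\cG_0)^1$ and monotonicity in the orbit collection gives $\lambda((\cG_0)^1, \gamma, 3\gamma, T(t)) \ge L\, e^{tP(\phi;\Lambda)}$.

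To put this in the form~\eqref{e.4.5.1}, I would write $e^{tP(\phi;\Lambda)} = e^{T(t)P(\phi;\Lambda)}\, e^{(t-T(t))P(\phi;\Lambda)}$ and use the uniform bound $|t - T(t)| \le 2M$ to absorb the second factor into a constant, setting $L_2 := -\log L + 2M|P(\phi;\Lambda)|$.

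For~\eqref{e.4.5.2}, the strategy is to pass from the two-scale partition sum (which uses $\Phi_{3\gamma}$) to the one-scale sum (which uses $\Phi_0$) via the Bowen property. Since $(\cG_0)^1 \subset (\cG_1)^1$ and since the Bowen property on $\cG_1$ at scale $\vep$ passes to $(\cG_1)^1$ with distortion $K(1) = K + 2\operatorname{Var}(\phi, \vep)$, and since $3\gamma \le 600\delta < 1000\delta = \vep$, for every $(x, T(t)) \in (\cG_0)^1$ we have
$$
\Phi_{3\gamma}(x, T(t)) \,\le\, \Phi_\vep(x, T(t)) \,\le\, \Phi_0(x, T(t)) + K(1).
$$
This translates termwise to $\lambda((\cG_0)^1, \gamma, 3\gamma, T(t)) \le e^{K(1)}\, \lambda((\cG_0)^1, \gamma, T(t))$, and combined with~\eqref{e.4.5.1} it gives~\eqref{e.4.5.2} with $L_3 = L_2 + K(1)$.

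The argument is essentially a short bookkeeping exercise atop Lemma~\ref{l.key} and the Bowen property. The only place that requires a little care (rather than being a genuine obstacle) is the small shift between the time $t$ supplied by Lemma~\ref{l.key} and the time $T(t)$ appearing in the conclusion; this is absorbed into a bounded multiplicative correction using $|t - T(t)| \le 2M$. Invariance of $\Lambda$ is essential here to guarantee that $f_{t_p, t_s}(\Lambda\times\RR^+)$ remains in $\Lambda\times\RR^+$, which is the step that lets us drop the push-forward and retain membership in $(\cG_0)^1$.
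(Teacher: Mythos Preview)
Your proof is correct and follows essentially the same approach as the paper: specialize Lemma~\ref{l.key} to $\cC=\Lambda\times\RR^+$ with $\alpha_1=1$, use Lemma~\ref{l.4.2} for the hypothesis, absorb the discrepancy $|t-T(t)|\le 2M$ into the constant, and then invoke the Bowen property to pass from~\eqref{e.4.5.1} to~\eqref{e.4.5.2}. Your write-up is in fact more detailed than the paper's (which omits the explicit constants and the verification that $3\gamma<\vep$); the remark about invariance of $\Lambda$ is harmless but unnecessary, since $f_{t_p,t_s}(\Lambda\times\RR^+)\cap(\cG_0)^1\subset(\cG_0)^1$ holds trivially by intersection.
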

\begin{proof}
	Take $\cC = \Lambda\times\RR^+$ with $\alpha_1 = 1$ in Lemma \ref{l.key}. 
 Then \begin{align*}
     \lambda((\cG_0)^1,{\gamma,3\gamma},T(t))\ge\,&\lambda(f_{t_p,t_s}(\Lambda\times\RR^+)\cap (\cG_0)^1,{\gamma,3\gamma},T(t))\\ \ge\,& L \cdot \lambda(\Lambda\times\RR^+, 2\gamma,2\gamma,t)\\ 
     \ge\,&L e^{tP(\phi;\Lambda)}\\
     \ge\,&L' e^{T(t)P(\phi;\Lambda)},
 \end{align*}
where the last inequality is due to $|T(t)-t|\le 2M$  (and $L'$ depends on $M$). Then, \eqref{e.4.5.2} is proved using the Bowen property.

\end{proof}

We conclude this section with the following upper and lower bounds of $\Lambda\times\RR^+$ and $\cO(U_1)$. 

%We explain why we need to take $U$: to have the upper bound of $\cO(U_1)$, we need the upper bound of $(\cG_1)^1,$ and here we use specification again(Lemma \ref{l.G1}). Because we must use specification twice to prove the Gibbs property, we need $U$ for it contains $B_{\delta}(B_{\delta}(\Lambda))$ and $\cO(U)$ still has the same pressure as $\Lambda\times\RR^+$.

\begin{lemma}\label{l.4.11}
	Assume that $(\cG_1)^1$ has tail specification at scale $\delta$  with shadowing orbit contained in $U$, $P_{\exp}^\perp (\phi,\vep)<P(\phi;\Lambda)$, and $\phi$ has the Bowen property  on $\cG_1$ at scale $\vep$.  For any $\gamma\in I_\delta$, let $T_1 = T_1(\gamma,1)$ be the constant given in Lemma \ref{l.key} with $\alpha_1 =1$. Then there exists  $C_3 = C_3(\gamma)>0$ such that for every $t\ge T_1$,
	$$
	C_3^{-1}e^{tP(\phi;\Lambda)}\le\lambda( \Lambda\times\RR^+, 2\gamma,t)\le \lambda(\cO(U_1),2\gamma,2\gamma, t)
	%\lambda( \Lambda\times\RR^+,2\gamma,2\gamma, t)
	\le C_3 e^{tP(\phi;\Lambda)}.
	$$
%	$$
%	C_3^{-1}e^{tP(\phi;\Lambda)}\le\lambda(\cO(U_1), 2\gamma,t)\le \le C_3 e^{tP(\phi;\Lambda)}.
%	$$
	
\end{lemma}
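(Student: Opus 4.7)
The lower bound $\lambda(\Lambda\times\RR^+,2\gamma,t)\ge C_3^{-1}e^{tP(\phi;\Lambda)}$ is already contained in Lemma~\ref{l.4.10} (take $C_3\ge e^{L_1}$), so this part requires no further work. The middle inequality is a straightforward monotonicity statement: since $\Lambda\times\RR^+\subset\cO(U_1)$ and $\lambda(\cdot,\delta,\vep,t)$ is monotonically increasing in $\vep$, we get
$$
\lambda(\Lambda\times\RR^+,2\gamma,t)=\lambda(\Lambda\times\RR^+,2\gamma,0,t)\le\lambda(\Lambda\times\RR^+,2\gamma,2\gamma,t)\le\lambda(\cO(U_1),2\gamma,2\gamma,t).
$$

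The main content is therefore the upper bound $\lambda(\cO(U_1),2\gamma,2\gamma,t)\le C_3 e^{tP(\phi;\Lambda)}$. The plan is to set $\alpha_1=1$ and argue by dichotomy. If $\lambda(\cO(U_1),2\gamma,2\gamma,t)<e^{tP(\phi;\Lambda)}$, we are done. Otherwise, we apply Lemma~\ref{l.key1} with $\cC=\cO(U_1)$ to produce integers $t_p,t_s\in[0,[M]]\cap\NN$ and a time $T(t)=t-(t_p+t_s)\in[t-2M,\,t]$ such that
$$
\lambda\bigl(f_{t_p,t_s}(\cO(U_1))\cap(\cG_1)^1,\gamma,3\gamma,T(t)\bigr)\ge L\cdot \lambda(\cO(U_1),2\gamma,2\gamma,t).
$$
On the other hand, $f_{t_p,t_s}(\cO(U_1))\cap(\cG_1)^1\subset(\cG_1)^1$, and since $3\gamma\le 600\delta<\vep$, Proposition~\ref{p.4.7} applies at the scale $\eta=3\gamma$, giving
$$
\lambda\bigl(f_{t_p,t_s}(\cO(U_1))\cap(\cG_1)^1,\gamma,3\gamma,T(t)\bigr)\le\lambda\bigl((\cG_1)^1,\gamma,3\gamma,T(t)\bigr)\le C_2 e^{T(t)P(\phi;\Lambda)}.
$$

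Combining the two estimates and absorbing the bounded factor $e^{(t-T(t))P(\phi;\Lambda)}\le e^{2M|P(\phi;\Lambda)|}$ (coming from $|T(t)-t|\le 2M$, which is a constant once $\gamma$ is fixed), one obtains
$$
\lambda(\cO(U_1),2\gamma,2\gamma,t)\le \frac{C_2\, e^{2M|P(\phi;\Lambda)|}}{L}\,e^{tP(\phi;\Lambda)}.
$$
Setting $C_3$ to be the maximum of $e^{L_1}$, $1$, and this last constant yields all three inequalities simultaneously for $t\ge T_1$.

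The only subtle point, and what I would consider the main obstacle, is verifying that the output scales produced by Lemma~\ref{l.key1}, namely $(\gamma,3\gamma)$, are legal inputs for the upper bound provided by Proposition~\ref{p.4.7}, which requires $\eta\in[0,\vep]$. This is ensured by the standing hypothesis $\vep\ge 1000\delta$ together with the range $\gamma\in I_\delta=[8\delta,200\delta]$, so that $3\gamma\le 600\delta<\vep$. Once the scales are aligned, the whole argument is just the standard ``large partition sum forces a big intersection with the good core, but the good core has the correct exponential growth rate'' dichotomy.
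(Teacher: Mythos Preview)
Your proof is correct and follows the same approach as the paper: Lemma~\ref{l.4.10} for the lower bound, monotonicity for the middle inequality, and Lemma~\ref{l.key1} combined with Proposition~\ref{p.4.7} for the upper bound. You are in fact more careful than the paper, which omits the dichotomy needed to invoke the hypothesis of Lemma~\ref{l.key1} and silently absorbs the $e^{(t-T(t))P}$ factor.
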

\begin{proof}
	The first inequality is Lemma \ref{l.4.10}. The second one is by monotonicity on the second scale and the observation that $\Lambda\times\RR^+\subset \cO(U_1)$. Last one is from Lemma \ref{l.key1} with $\cC = \cO(U_1)$ and Proposition \ref{p.4.7}:
	$$ \lambda(\cO(U_1), 2\gamma, 2\gamma,t)\le L^{-1}\lambda((\cG_1)^1, \gamma, 3\gamma,T(t)) \le C_2L^{-1}e^{tP}.$$
\end{proof}
%
%\begin{lemma}\label{l.4.11'}
%	Assume that $\cG_1$ has tail specification at scale $\delta$  with shadowing orbit contained in $U$, $P_{\exp}^\perp (\phi,\vep)<P(\phi;\Lambda)$, and $\phi$ has the Bowen property on $\cG_1$ at scale $\vep$.  For any $\gamma\in I_\delta$, let $T_1 = T_1(\gamma,1)$ be the constant given in Lemma \ref{l.key} with $\alpha_1 =1$. Then there exists  $C_3 = C_3(\gamma)>0$ such that for every $t\ge T_1$,
%
%\end{lemma}
%\begin{proof}
%	The first inequality is Lemma \ref{l.4.10}. The second one is by monotonicity on the second scale. Last one is from Lemma \ref{l.key1} for $\cG_1$ with $\cC = \cO(U_1)$, so 
% $$\lambda(\cO(U_1), 2\gamma, 2\gamma,t)\le L^{-1}\lambda((\cG_1)^1, \gamma, 3\gamma,T(t)) \le C_2L^{-1}e^{tP},$$
%	where the last inequality is from Lemma \ref{l.G1}.
%\end{proof}

We will need the upper bound of $\cO(U_1)$ in the proof of the Gibbs property in the next section (see Lemma \ref{l.gibbs}). This is due to the need to use specification on $\Lambda\times\RR^+$ and the shadowing orbits are therefore contained in $\cO(U_1)$. We remark that in this section, only the specification on $(\cG_1)^1$ is needed (Assumption \hyperlink{I1}{$(I_1)$}). However, Assumption \hyperlink{I0}{$(I_0)$} will be useful in the next section when we prove the lower Gibbs bound on $\cG_0\subset \Lambda\times\RR^+$. 

\section{Gibbs Property}\label{sec4}
 Let  $\rho = 22\cdot \delta$ and $\rho_1 = 20\cdot \delta$;
we now construct an equilibrium state supported on $\Lambda$. {The construction is standard: for any $t>0$, we let $E_t$ be a $(t, \rho_1)$-separated set of $(\cO(U_1))_t$ with maximal cardinality, satisfying
\begin{equation}\label{e.es}
\sum_{x \in E_{t}} e^{\Phi_{0}(x, t)} \ge  \lambda( \cO(U_1), \rho_1, t) -1.
\end{equation}

}
Then, as in the proof of Lemma \ref{l.equalpressure}, define the measures
\begin{align*}\numberthis\label{e.nu}
&\nu_t = \frac{\sum_{x \in E_t} e^{\Phi_0(x, t)}\cdot \delta_x}{\sum_{x \in E_t} e^{\Phi_0(x, t)}},\\
&\mu_t = \frac{1}{t} \int_0^t (f_s)_{\ast}v_t ds.
\end{align*}
We take a subsequence $\mu_{t_k}$ converging to a probability measure $\mu$ which must be invariant. Since we assume $P_{\exp}^\perp (\phi,\vep;\Lambda)<P(\phi;\Lambda)$ (and recall that $\vep \ge 1000\delta>\rho_1$), % the inequality $ P(\cO(U),\phi, \rho_1) \le  P_\mu(\Lambda,\phi)$ in Lemma \ref{l.equalpressure} now becomes an equality. %then $\mu$ is truly an equilibrium state. 
the proof of the variational principle shows that $\mu$ is an equilibrium state. 
We now establish the (lower) Gibbs property\footnote{Note that the lower Gibbs property may not hold on $\cG_1$ or $(\cG_1)^1$ since $\mu$ is supported  on $\Lambda$, and orbits in $\cG_1$ may not be in $\Lambda$ (although their Bowen balls may intersect with $\Lambda$). } of $\mu$ on $(\cG_0)^1.$ This is the main ingredient for the uniqueness of the equilibrium state.
    
\begin{lemma}\label{l.gibbs}
	There exist $T_2>0$, $Q>0$ such that for every $(x,t)\in (\cG_0)^1$ with $t>T_2$, we have 
	\begin{equation}\label{e.gibbs}
		\mu(B_t(x,\rho))\ge Qe^{-tP(\phi;\Lambda)+\Phi_0(x,t)}.
	\end{equation}
\end{lemma}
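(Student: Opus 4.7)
The plan is to bound $\mu_{t_k}(B_t(x,\rho-\zeta))$ from below uniformly in $t_k$ for a small fixed $\zeta\in(0,\delta)$, then pass to the weak-$*$ limit via $\overline{B_t(x,\rho-\zeta)}\subset B_t(x,\rho)$. The mechanism is to manufacture many orbits $w\in\cO(U_1)$ of length $\approx T=t_k$ that shadow a prefix in $(\cG_0)^1$, then the segment $(x,t)$, then a suffix in $(\cG_0)^1$, via Assumption (\hyperlink{I0}{$I_0$}); each such $w$ is $d_T$-close to some $e\in E_T$, which is therefore pushed into $B_t(x,\rho-\zeta)$ at time $\approx s_1$.

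I will work with two scales in $I_\delta$: a large $\gamma_1$ with $\gamma_1>2(\delta+\rho_1)$ (e.g.\ $\gamma_1=50\delta$) for the prefix/suffix separated sets, and $\gamma_2=\rho_1/2=10\delta$ for the upper bound on $Z:=\sum_{e\in E_T}e^{\Phi_0(e,T)}$. For each large integer $n$, Lemma \ref{l.cor1} with scale $\gamma_1$ supplies a length $s_1(n)\in[n-2M,n]$ and an $(s_1,\gamma_1)$-separated set $E_1^{(n)}\subset((\cG_0)^1)_{s_1}$ with $\sum_{y\in E_1^{(n)}}e^{\Phi_0(y,s_1)}\gtrsim e^{s_1 P(\phi;\Lambda)}$, and similarly a suffix set $E_2^{(n)}$ of length $s_2$, chosen so that $s_1+s_2\le T-t\le s_1+s_2+2\tau$. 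For each pair $(y_1,y_2)\in E_1^{(n)}\times E_2^{(n)}$, Assumption (\hyperlink{I0}{$I_0$}) applied to the triple $(y_1,s_1),(x,t),(y_2,s_2)\in(\cG_0)^1$ produces $w=w(y_1,y_2,n)$ with gluing times $\tau_1,\tau_2\in[0,\tau]$ and $(w,T^*)\in\cO(U_1)$ for some $T^*\in[T,T+2\tau]$, satisfying the three $\delta$-shadowing inequalities.

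Since $E_T$ is $(T,\rho_1)$-spanning in $(\cO(U_1))_T$ by maximality, there exists $e(w)\in E_T$ with $d_T(w,e(w))\le\rho_1$; consequently $d_t(f_{s_1+\tau_1}(e),x)\le\rho_1+\delta=21\delta$. By uniform continuity of the flow, a universal $\sigma_0>0$ can be chosen so that $f_s(e)\in B_t(x,\rho-\zeta)$ for every $s\in[s_1+\tau_1,s_1+\tau_1+\sigma_0]$. The Bowen property on $\cG_1\supset\cG_0$ applied to each of the three shadowed segments yields
\begin{equation*}
\Phi_0(e,T)\ge\Phi_0(y_1,s_1)+\Phi_0(x,t)+\Phi_0(y_2,s_2)-C_0,
\end{equation*}
with $C_0=3K+4\tau\|\phi\|$ absorbing the gluing intervals and $|T-T^*|\le2\tau$. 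Following the argument of Lemma \ref{l.4.3}, the gluing-time pair $(\tau_1,\tau_2)$ is partitioned into $m^2$ cells of diameter $\tau/m$, with $m$ chosen via $\gamma_1-2\delta-2\rho_1>0$ so that $(y_1,y_2)\mapsto e(w)$ is injective on each cell; the total multiplicity is then at most $m^2$.

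To cancel the $1/T$ factor in $\mu_T$, I will take integers $n_1<\cdots<n_N$ with $N=\Theta(T)$, spaced at gaps $\ge 2(\tau+\sigma_0+2M)$ inside $[T_1,T-t-T_1]$, so that the intervals $[s_1(n_j)+\tau_1,s_1(n_j)+\tau_1+\sigma_0]$ are pairwise disjoint across $j$. A Fubini-style computation then gives
\begin{equation*}
\int_0^T\nu_T(f_{-s}B_t(x,\rho-\zeta))\,ds\gtrsim N\cdot\sigma_0\cdot m^{-2}\cdot\frac{e^{\Phi_0(x,t)+(T-t)P(\phi;\Lambda)}}{Z},
\end{equation*}
and combining with $Z\le\lambda(\cO(U_1),2\gamma_2,2\gamma_2,T)\le C_3\, e^{TP(\phi;\Lambda)}$ from Lemma \ref{l.4.11} (via monotonicity and $\rho_1=2\gamma_2$) and dividing by $T$ yields the uniform bound $\mu_T(B_t(x,\rho-\zeta))\ge Q\,e^{-tP(\phi;\Lambda)+\Phi_0(x,t)}$. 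The main obstacle is the simultaneous scale management: the prefix/suffix separation must be large enough ($\gamma_1>2(\delta+\rho_1)$) to preserve distinctness after the double shadowing, yet the bound on $Z$ demands a scale comparable to $\rho_1$; both fit in $I_\delta$ precisely because the hypothesis $\vep\ge 1000\delta$ provides ample room.
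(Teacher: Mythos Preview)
Your overall strategy matches the paper's: glue a prefix and suffix from $(\cG_0)^1$ to $(x,t)$ via Assumption~(\hyperlink{I0}{$I_0$}), push the resulting shadowing point into $E_T$ by spanning, bound the multiplicity, and combine the Bowen property with the upper bound on $\cO(U_1)$ from Lemma~\ref{l.4.11}. The gap is at the step where you assert $(w,T^*)\in\cO(U_1)$ with $T^*\in[T,T+2\tau]$ and then invoke that $E_T$ is $(T,\rho_1)$-spanning for $(\cO(U_1))_T$. From $s_1+s_2\le T-t\le s_1+s_2+2\tau$ and $\tau_1,\tau_2\in[0,\tau]$ you only get $T^*=s_1+s_2+t+\tau_1+\tau_2\in[T-2\tau,\,T+2\tau]$; and since Lemma~\ref{l.cor1} places $s_1,s_2$ only in windows of width $2M$, even the constraint $T-t-2\tau\le s_1+s_2\le T-t$ is not guaranteed to be achievable. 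Whenever $T^*<T$, specification certifies only $(w,T^*)\in\cO(U_1)$; the tail $f_{[T^*,T)}(w)$ may leave $U_1$, so $w\notin(\cO(U_1))_T$, and there is no reason $E_T$ should $\rho_1$-span it. This is exactly the point where the locally maximal setting differs from the classical one in \cite{CT16,PYY21}, and it is the one new difficulty this lemma must overcome.

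The paper resolves it by padding: it fixes once and for all an auxiliary $(x_0,t_0)\in(\cG_0)^1$ with $t_0\ge T_2\ge 2M+100\tau$ and applies specification to the \emph{five} segments $(x_0,t_0),(x_1,T(u_1)),(x,t),(x_2,T(u_2)),(x_0,t_0)$. The extra $t_0$ at each end absorbs both the $2M$ shortfall from Lemma~\ref{l.cor1} and the variable gap times, so that after a time shift $\iota$ one has $(f_\iota y,s)\in\cO(U_1)$ exactly; the remark immediately following the paper's proof isolates this as the sole purpose of $(x_0,t_0)$. Your three-segment construction can be repaired the same way, or by deliberately overshooting with $s_1+s_2\ge T-t+4M+2\tau$ and then shifting as the paper does, but as written the map $(y_1,y_2)\mapsto e(w)\in E_T$ is not well-defined.
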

\begin{proof}
Let $M, T_1$ be given by~\eqref{e.MT} with $\gamma=2\rho,\ \alpha_1=1.$ Let 
$$
T_2 = \max\{T_1, T_0, 2M+100\tau\}
$$ 
where $T_0, \tau$ are given by Definition \ref{d.spec11}. Next, we fix any $t_0\ge T_2$ and take, once and for all, an orbit segment $(x_0,t_0)\in (\cG_0)^1$. 
 
 For any $t>T_2,$
 we estimate $\nu_s(f_{-r}(B_t(x,\rho)))$ for any  $r\in (T_1+ \tau, s - t - 2\tau - 2T_1).$
	
 Let $u_1 = r-\tau$ and $u_2 = s-t-r-\tau$ (see \cite[Fig. 4]{CT16}),
 we first apply Lemma \ref{l.cor1} to get $(T(u_1), \rho)$-separated sets $E'_{T(u_1)}$ of $((\cG_0)^1)_{T(u_1)}$ and $(T(u_2), \rho)$-separated sets $E'_{T(u_2)}$ of $((\cG_0)^1)_{T(u_2)}$, satisfying
 \begin{align}
    \sum_{x \in E'_{T(u_1)}}e^{\Phi_{0}(x, T(u_1))} \geq  \frac{1}{2}e^{-L_3} e^{T(u_1)P(\phi;\Lambda)} \label{e.gibbs.s1}\\
    \sum_{x \in E'_{T(u_2)}}e^{\Phi_{0}(x, T(u_2))} \geq  \frac{1}{2}e^{-L_3} e^{T(u_2)P(\phi;\Lambda)}.\label{e.gibbs.s2}
 \end{align}
 
  Let $E_s$ be the $(s,\rho_1)$-separated set of $(\cO(U_1))_s$ with maximal cardinality in the construction of $\mu$.  By the specification on $(\cG_0)^1$ with shadowing orbit in $U_1$, given $\cx \in  E'_{T(u_1)}\times E'_{T(u_2)}$  we obtain times $\tau_i\le \tau$, $i=1,2,3,4,$ together with a point $y(\cx)\in (\cO(U_1))_{s_4+t_0}$ where  
  $$
  s_4 = t_0 + \tau_1 + T(u_1)+ \tau_2 + t + \tau_3 + T(u_2) + \tau_4,
  $$  %(we still use notation $r_i$ as our partial sums.):
  satisfying
 \begin{align*}
     y(\cx) &\in B_{t_0}(x_0,  \delta) \\
     f_{s_1}y(\cx):=f_{t_0 + \tau_1}y(\cx) & \in B_{T(u_1)}(x_1, \delta) \\
     f_{s_2}y(\cx):=f_{t_0 + \tau_1 + T(u_1)+ \tau_2}y(\cx) & \in B_{t}(x, \delta) \\
     f_{s_3}y(\cx):=f_{t_0 + \tau_1 + T(u_1)+ \tau_2 + t + \tau_3}y(\cx) &\in B_{T(u_2)}(x_2,  \delta) \\
     f_{s_4}y(\cx):=f_{t_0 + \tau_1 + T(u_1)+ \tau_2 + t + \tau_3 + T(u_2) + \tau_4}y(\cx) &\in B_{t_0}(x_0, \delta).
 \end{align*}
 See Figure \ref{f.2}.
 
 \begin{figure}[h!]
 	\centering
 	\def\svgwidth{\columnwidth}
 	\includegraphics[scale=0.9]{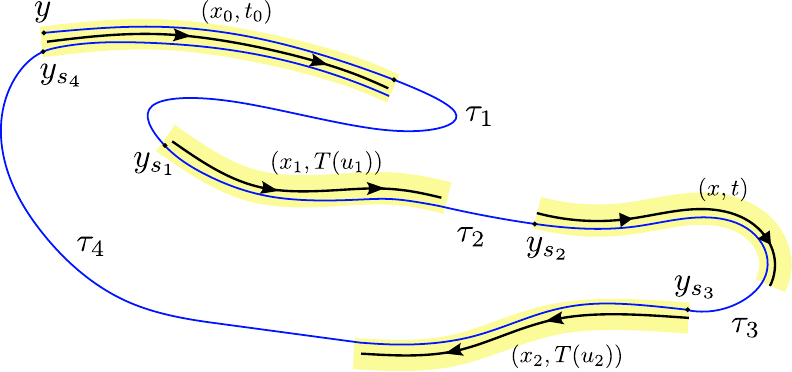}
 	\caption{The shadowing orbit $(y, s_4+t_0)\in \cO(U_1)$}
 	\label{f.2}
 \end{figure}
  
 To simplify notation, define $\iota= s_2 -r$. Then, the point $y_\iota = f_\iota(y)$ satisfies that $f_r(y_\iota) = y_{s_2}\in B_t(x,\delta).$ 
  Furthermore, we have 
 \begin{alignat*}{2}	
 	\iota =\,& t_0 + \tau_1 + T(u_1) + \tau_2 - r\\
 	\geq\,& t_0 + \tau_1 + (r - \tau -2M) + \tau_2 - r \qquad &&  (u_1 = r-\tau,\ T(u_1) \ge u_1 - 2M)  \\
 	=\,& t_0 + \tau_1  - \tau -2M + \tau_2  \\
 	\geq \,& T_2 - \tau -2M && (\tau_1,\tau_2\ge 0,\  t_0\ge T_2)\\
 	> \,& 0.
 \end{alignat*}
 More over, 
 \begin{alignat*}{2}	
	\iota + s =\,& s_2 - r + s\\
	= \,& s_2 + (s-t-r-\tau-2M)+t+\tau+2M \\
	=  \,& s_2 + (u_2-2M)+t+\tau+2M  \qquad &&  (u_2 = s-t-r-\tau)\\
	\le\, & s_2 +  T(u_2) +t +\tau+2M \qquad &&  (T(u_2) \ge u_2 - 2M)\\
	= \, &s_4 - \tau_3-\tau_4 + \tau + 2M\qquad && (s_4 = s_2 + t + \tau_3 + T(u_2) + \tau_4)\\
	\le \,& s_4 +T_2 \qquad && (T_2 \ge 2M+100\tau)\\
	\le \,& s_4+t_0.
 \end{alignat*}

Now we consider the orbit segment $(y_\iota,s)$. Since $y\in (\cO(U_1))_{s_4+t_0}$ (and recall that $(\cO(U_1))_{t_1}\subset (\cO(U_1))_{t_2}$ whenever $t_1 \ge t_2$), the previous inequalities show that 
\begin{equation}\label{e.wd}
y_\iota\in (\cO(U_1))_{s}.
\end{equation}

Since $E_s$ is a $(s,\rho_1)$-separated set of $(\cO(U_1))_{s}$ with maximal cardinality, it is also a $(s,\rho_1)$-spanning set of $(\cO(U_1))_{s}$. This allows us to define the map
$\pi: E'_{T(u_1)}\times E'_{T(u_2)} \to E_s$ that maps $y_\iota$ to the nearest point in $E_s$ under the $d_s$-metric; consequently,
$\pi(\cx)$ satisfies
\[d_s(\pi(\cx), y_\iota) \le \rho_1.\] 
Since $\rho = \rho_1 + 2\delta$, we obtain
\begin{align*}
     d_{t}(x, f_{r}\pi(\cx))\le &\, d_{t}(x, f_{r}y_\iota) + d_{t}(f_{r}y_\iota, f_{r}\pi(\cx)) \qquad\qquad\\
     %= \,& d_{t}(x, f_{s_2}y) + d_{t}(f_{r}y_\iota, f_{r}\pi(\cx)) \\
     \le &  \delta + \rho_1 < \rho,\\
     d_{T(u_1)}(x_1, f_{s_1-\iota}\pi(\cx))\le\, & d_{T(u_1)}(x_1, f_{s_1-\iota}y_\iota) + d_{T(u_1)}(f_{s_1-\iota}y_\iota, f_{s_1-\iota}\pi(\cx)) \\
     = \,& d_{T(u_1)}(x_1, f_{s_1}y) + d_{T(u_1)}(f_{s_1-\iota}y_\iota, f_{s_1-\iota}\pi(\cx)) \\
     \le \,&  \delta + \rho_1 < \rho,
 \end{align*} 
 and
 \begin{align*}
     d_{T(u_2)}(x_2, f_{s_3-\iota}\pi(\cx))\le \,& d_{T(u_2)}(x_2, f_{s_3-\iota}y_\iota) + d_{T(u_2)}(f_{s_3-\iota}y_\iota, f_{s_3-\iota}\pi(\cx)) \\
     = \,& d_{T(u_2)}(x_2, f_{s_3}y) + d_{T(u_2)}(f_{s_3-\iota}y_\iota, f_{s_3-\iota}\pi(\cx)) \\
     \le \,&  \delta + \rho_1 < \rho.
 \end{align*}
In the estimate above, we need orbit segments $(f_{s_1-\iota}\pi(\cx), T(u_1))$ and $(f_{s_3-\iota}\pi(\cx), T(u_2))$ to be contained in $(\pi(\cx), s);$  for this, we shall check 
 $(s_1-\iota,s_3-\iota+T(u_2))\subset (0,s)$.
This is proved by noting that  
\begin{align*}
    s_1 - \iota =\,& r - T(u_1) - \tau_2 \\
    =\,& r - \tau_2 - T(r - \tau) \\
    \geq\,& r - \tau - (r - \tau)\\
    = \,&0,
\end{align*} and
\begin{align*}    
    s_3-\iota+T(u_2) = \,& t + \tau_3 +r + T(u_2) \\
    = \,&t + \tau_3 +r + T(s-t-r-\tau) \\
    \le\,&  s.
\end{align*}

The same proof as in Lemma \ref{l.4.3} shows that $\pi$ has finite multiplicity $(C_1)^2$  where $C_1$ is the constant given by Lemma \ref{l.4.3}. 
Moreover, by the Bowen property on $(\cG_0)^1$ at scale $\vep \ge 1000 \delta$ we obtain, for some constant $C_4>0,$
 \begin{align}\label{e.6.1a}
        \Phi_{0}(\pi \mathbf{x}, s)\ge 
%        &\int_{0}^{s} \phi\left(f_{t}(\pi \mathbf{x})\right) d t \\
%        \geq & -(4\tau+ 4M)\|\phi\|+\Phi_{0}(f_{s_1 - \iota}(\pi \mathbf{x}), T(u_1)) \\
%        & + \Phi_{0}(f_{r}(\pi \mathbf{x}), t)
%        + \Phi_{0}(f_{s_3 - \iota}(\pi \mathbf{x}), T(u_2))\\ 
%        \geq & -(4\tau+ 4M )\|\phi\| - 2K +\Phi_{0}(x_1, T(u_1)) \\
%        & + \Phi_{0}(f_{r}(\pi \mathbf{x}), t)+ \Phi_{0}(x_2, T(u_2))\\
         \,& -C_4 +\Phi_{0}(x_1, T(u_1)) 
         + \Phi_{0}(x, t)+ \Phi_{0}(x_2, T(u_2)).
    \end{align}
    
It remains to estimate
	\begin{align*}	 
		&\nu_s(f_{-r}(B_t(x,\rho)))\\
		=\,&\frac{\sum_{z\in E_s} e^{\Phi_0(z,s)} \delta_z(f_{-r}(B_t(x,\rho)))}{\sum_{z\in E_s} e^{\Phi_0(z,s)}}\\
	 	\geq \,&  C_1^{-2}\left(\sum_{z\in E_s} e^{\Phi_0(z,s)}\right)^{-1}\cdot\sum_{\textbf{x}\in E_{T(u_1)}^\prime\times E_{T(u_2)}^\prime} e^{\Phi_0(\pi(\textbf{x}),s)}\\
		\ge \,&  C_5 e^{\Phi_0(x,t)}\left(\sum_{z\in E_s} e^{\Phi_0(z,s)}\right)^{-1}\sum_{x_1\in E_{T(u_1)}^\prime} e^{\Phi_0(x_1,T(u_1))} \\
        & \times \sum_{x_2\in E_{T(u_2)}^\prime} e^{\Phi_0(x_2,T(u_2))},
	\end{align*}
	where the last inequality follows from \eqref{e.6.1a}, and $C_5 = (C_1)^{-2}\cdot e^{-C_4}.$
	
	 For the term $\left(\sum_{z\in E_s} e^{\Phi_0(z,s)}\right)^{-1}$, we use the upper bound on $\cO(U_1)$ (Lemma \ref{l.4.11}); for the last two terms, we use \eqref{e.gibbs.s1} and \eqref{e.gibbs.s2}, namely the lower bound on $(\cG_0)^1$. This gives
	\begin{align*}
	\nu_s(f_{-r}(B_t(x,\rho))) & \ge C_6 \cdot e^{\Phi_0(x,t)}\cdot e^{-sP} \cdot e^{T(r - \tau)P}\cdot e^{(T(s-t-r-\tau))P} \\
	& \ge  C_6\cdot e^{-(2\tau+4M) P}\cdot e^{\Phi_0(x,t)}\cdot e^{-tP}\\
         &=  C_7 \cdot e^{\Phi_0(x,t)-tP}.
	\end{align*}

The constant $C_7$ above doesn't rely on $r$; this allows us to integrate over $r$ and get

\begin{align*}
	\mu_s(B_t(x, \rho)) &\geq \frac{1}{s} \int_{T_0+ \tau}^{s - t - 2\tau - 2T_0} \nu_s(f_{-r}(B_t(x,\rho))) dr\\
	&\geq ( 1 - \frac{t + 3\tau + 3T_0}{s} ) \cdot C_7\cdot  e^{\Phi_0(x,t)-tP}.
	\end{align*}
The conclusion of the lemma follows by letting $s \rightarrow \infty$.
 
\end{proof}

We remark that the use of $(x_0, t_0)$ is to enlarge the time interval $I$ of $y$ for which the orbit segment $f_I(y)$ is contained in $\cO(U_1)$. Because $T(u_1), T(u_2)$ are shorter than $u_1, u_2$ by at most $2M$ respectively, we need the time interval  $I$ to have  length at least \( s + 4M \) to guarantee that \( \pi: E'_{T(u_1)}\times E'_{T(u_2)} \to E_s \) is well-defined (see the argument that leads  to \eqref{e.wd}). %We emphasize that this modification is the most significant part of this paper. 

We also obtain the following lemma which was used in \cite{CT16, PYY23} to establish the ergodicity of $\mu$.
\begin{lemma}\label{l.mixgibbs}
	There exist $T_2>0$, $Q>0$ such that for every $(x_1,t_1), (x_2,t_2)\in (\cG_0)^1$ with $t_1, t_2>T_2$, and every $q > T_2 + 2\tau$, there exists $q' \in [q - 2\tau - 2M, q]$ such that
 $$\mu(B_{t_1}(x_1,\rho) \cap f_{t_1+q'}B_{t_2}(x_2,\rho))\ge Q'e^{-(t_1+t_2)P(\phi;\Lambda)+\Phi_0(x_1,t_1)+\Phi_0(x_2,t_2)}.$$
 Furthermore, we can choose $N \in \cN$ such that $q'$ can be taken such that $q' = q - k - \frac{2i}{N} \tau$ for some $k \in\{0, 1, ...,2[M]\}$ and $i \in\{0, 1, ...,N\}$.
\end{lemma}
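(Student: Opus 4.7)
The plan is to adapt the construction in Lemma \ref{l.gibbs} by inserting a third padding orbit segment from $(\cG_0)^1$ between the two target segments, so that the time separation between the shadowing orbit's visits to $B_{t_1}(x_1,\rho)$ and $B_{t_2}(x_2,\rho)$ can be tuned to lie in $[q-2\tau-2M,q]$. Fix once and for all $(x_0,t_0)\in(\cG_0)^1$ with $t_0\ge T_2$. For $s$ sufficiently large and $r$ in an appropriate range, I set
\[
u_L=r-\tau,\qquad u_m=q+t_1-t_2-2\tau,\qquad u_R=s-r-t_1-t_2-q-\tau,
\]
and invoke Lemma \ref{l.cor1} to obtain $(T(u_j),\rho)$-separated sets $E'_{T(u_j)}\subset((\cG_0)^1)_{T(u_j)}$ for $j\in\{L,m,R\}$, each with partition sum bounded below by $\tfrac12 e^{-L_3}e^{T(u_j)P(\phi;\Lambda)}$.

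For each triple $\mathbf{x}=(z_L,z_m,z_R)$ in the product of these sets, I apply Assumption \hyperlink{I0}{$(I_0)$} to the sequence
\[
(x_0,t_0),\,(z_L,T(u_L)),\,(x_2,t_2),\,(z_m,T(u_m)),\,(x_1,t_1),\,(z_R,T(u_R)),\,(x_0,t_0)
\]
to obtain a shadowing orbit $y(\mathbf{x})\in\cO(U_1)$ with six gluing times $\tau_1,\dots,\tau_6\in[0,\tau]$. A suitable shift $\iota$ yields $y_\iota=f_\iota y\in(\cO(U_1))_s$, and projecting $y_\iota$ onto the nearest point $\pi(\mathbf{x})$ of the $(s,\rho_1)$-spanning set $E_s$ gives, by the triangle inequality, $f_r\pi(\mathbf{x})\in B_{t_1}(x_1,\rho)$ together with $f_{r-t_1-q'(\mathbf{x})}\pi(\mathbf{x})\in B_{t_2}(x_2,\rho)$, where $q'(\mathbf{x})=t_2-t_1+\tau_3+T(u_m)+\tau_4\in[q-2\tau-2M,q]$ by the choice of $u_m$. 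Equivalently, $\pi(\mathbf{x})\in f_{-r}(B_{t_1}(x_1,\rho)\cap f_{t_1+q'(\mathbf{x})}B_{t_2}(x_2,\rho))$, and the argument of Lemma \ref{l.4.3} bounds the multiplicity of $\pi$ by $C_1^3$.

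The value $q'(\mathbf{x})$ varies through the integer slack $u_m-T(u_m)\in\{0,\dots,2[M]\}$ and the continuous sum $\tau_3+\tau_4\in[0,2\tau]$, so I will discretize the latter into $N+1$ subintervals of width $2\tau/N$, producing at most $(2[M]+1)(N+1)$ admissible bins of the form $q-k-\tfrac{2i}{N}\tau$. The pigeonhole principle yields a single bin $q'$ that captures at least a $((2[M]+1)(N+1))^{-1}$-fraction of the shadowing orbits; restricting to this $q'$, the estimates of Lemma \ref{l.gibbs} (Bowen property on $(\cG_0)^1$, upper bound on $\cO(U_1)$ from Lemma \ref{l.4.11}, and the three lower bounds of the first paragraph) combine to yield the desired lower bound on $\nu_s(f_{-r}(B_{t_1}(x_1,\rho)\cap f_{t_1+q'}B_{t_2}(x_2,\rho)))$. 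Integrating over $r$ and letting $s\to\infty$ as at the end of Lemma \ref{l.gibbs} finishes the argument. The main obstacle is ensuring that the pigeonhole constant remains uniform in $t_1,t_2,q$; this holds because the number of bins depends only on $[M]$, $N$, and $\tau$, while all distortion and partition-sum constants depend only on $\rho,\rho_1,\vep$ and the fixed reference $(x_0,t_0)$.
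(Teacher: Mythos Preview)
Your outline is the paper's proof: bracket the shadowing sequence with $(x_0,t_0)$ at both ends, use three padding segments from $(\cG_0)^1$ (the paper's $(y_1,s_1),(y_2,s_2),(y_3,s_3)$), project the resulting $\cO(U_1)$-orbit onto $E_s$, bound the multiplicity by $C_1^3$, and pigeonhole over the finitely many admissible values of $q'$. The paper only sketches this and defers to \cite[Lemma 7.3]{PYY21}, so your version is in fact more detailed.

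There is, however, a concrete error in your choice of $u_m$, and it originates in a sign typo in the stated lemma. In \cite[Lemma 4.16]{CT16} and \cite[Lemma 7.3]{PYY21} the set reads $B_{t_1}(x_1,\rho)\cap f_{-(t_1+q')}B_{t_2}(x_2,\rho)$, and the paper's own sketch accordingly shadows in the order $(x_1,t_1)\to(y_2,s_2)\to(x_2,t_2)$. With the intended minus sign, the gap between the end of the $(x_1,t_1)$-shadow and the start of the $(x_2,t_2)$-shadow is exactly $q'$, so the middle segment has length $u_m=q-2\tau>T_2$ for every admissible $q$. Your reversed order forces $u_m=q+t_1-t_2-2\tau$, which is negative as soon as $t_2>q+t_1-2\tau$; the hypotheses $t_1,t_2>T_2$ and $q>T_2+2\tau$ impose no upper bound on $t_2$ relative to $t_1+q$, so this case must be covered, yet Lemma~\ref{l.cor1} cannot be invoked and the construction collapses. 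Swapping $x_1$ and $x_2$ in your shadowing sequence and setting $u_m=q-2\tau$ repairs this, after which the remainder of your argument goes through unchanged and coincides with the paper's.
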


The proof follows the modification as in the proof of Lemma \ref{l.gibbs} applied to the proof of \cite[Lemma 7.3]{PYY21}, which itself is a modification of \cite[Lemma 4.16, Lemma 4.17]{CT16}. An orbit segment $(x_0,t_0)\in (\cG_0)^1$ is fixed once and for all. Then, for any $\cx = (y_1,y_2,y_3)\in E^1\times E^2\times E^3$ where $(y_1,s_1), (y_2,s_2), (y_3,s_3)$ are taken from some separated sets $E^1,E^2,E^3$ of $(\cG_0)^1$ with appropriate length, we choose  an shadowing orbit $(y,T) \in\cO(U_1)$ such that  it shadows orbit segments at scale $\delta$ in the following order: 
$$
(x_0,t_0)\to (y_1,s_1)\to (x_1,t_1)\to (y_2,s_2)\to (x_2,t_2)\to (y_3,s_3)\to (x_0,t_0).
$$
Let $E_s$ be the $(s,\rho_1)$-separated set of $(\cO(U_1))_s$ as before. The choice of $t_0\ge T_2\ge  2M+100\tau$ guarantees that the map $\pi: E^1\times E^2\times E^3\to E_s$ is well-defined and have bounded multiplicity. Then, we use Lemma \ref{l.4.11} to obtain the upper bound on the partition sum of $E_s\subset U_1$ as we did in Lemma \ref{l.gibbs}, and use Lemma \ref{l.cor1} for the lower bound on the partition sum of $E^1,E^2,E^3\subset  \Lambda\times \RR^+$, 
 and the rest of the proof follows. The details are omitted.

\section{Finishing the proof of Theorem \ref{m.1}}\label{sec5}

At this point, we have re-established the main tools to prove the ergodicity of $\mu$ (Lemma \ref{l.mixgibbs} replacing \cite[Lemma 7.3]{PYY21}) and to prove the uniqueness of equilibrium state (Lemma \ref{l.gibbs} replacing \cite[Lemma 7.1]{PYY21}). The rest of the proof of Theorem \ref{m.1} does not use any specifications (other than those that have been used in the proof of previous lemmas), and therefore is a verbatim copy of \cite[Section 8]{PYY21} with the space $\bM$ replaced by the invariant set $\Lambda$. For this reason, below we will only highlight the necessary steps of the proof and omit all the details.  
%
%A measurable partition $A_t$ is called adapted to maximizing $(t, \lambda)$-separated set $E_t$ if for every $w \in A_t $, there is $x \in E_t$ with 
%\begin{equation}\label{e.adapted}
%		B_t(x, \frac{\lambda}{2}) \subset w \subset \overline{B}_t(x, \lambda).
%	\end{equation}
%We write $w_x$  as the unique element in partition such that make \eqref{e.adapted} holds, this is well-defined since $E_t$ is also a $(t, \lambda)$-spanning set. We collected some useful lemmas about adapted partitions. The first one gives a growth rate on partition sums for sets with a positive measure for an equilibrium measure.  
\begin{lemma}(See \cite[Lemma 4.18]{CT16},\cite[Lemma 8.1]{PYY21})\label{l.4.18}
Let $\epsilon, \delta $  be as before and let $\gamma \in I_{\delta}$. For every $\alpha \in (0, 1)$, there exists a constant $C_{\alpha} > 0$ with the following property: let $\nu$ be any equilibrium state for the potential $\phi$ whose support is contained in $\Lambda$, and let $\{E_t\}_{t>0}$ be a family  $(t, 2\gamma)$-separated sets with maximal cardinality for $\lambda(\Lambda, 2\gamma, t)$ with adapted partitions $A_t$. Then let $T_1 = T_1(\gamma,1)$ be the constant given in Lemma \ref{l.key} with $\alpha_1 =1$, for every $t > T_1$, if $ E_t' \subset E_t$ satisfies $\nu (\bigcap_{x\in E_t'} w_x ) \geq \alpha$, then letting $C = \{(x, t): x \in E_t'\}$, we have$$\lambda(C, 2\gamma, 2\gamma, t) \geq C_\alpha e^{tP(\phi;\Lambda)}.$$
\end{lemma}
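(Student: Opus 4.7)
The plan is to adapt the variational-principle argument of \cite[Lemma 4.18]{CT16} and \cite[Lemma 8.1]{PYY21} to our locally maximal setting, using the upper bound on $\lambda(\cO(U_1),2\gamma,2\gamma,t)$ from Lemma \ref{l.4.11} as the replacement for the ambient-space upper bound used in those references. Roughly speaking, one writes $tP(\phi;\Lambda)=th_\nu(f_1)+t\int\phi\,d\nu$ via the variational principle, expands $h_\nu(f_1)$ through the adapted partition $A_t$, and then invokes a log-sum (Jensen-type) inequality to extract a lower bound on the partition sum restricted to the selected subset $E_t'$.

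First, I would invoke the almost expansivity of $\nu$ at scale $2\gamma\le\vep$: since $\nu$ is an equilibrium state and $P_{\exp}^\perp(\phi,\vep;\Lambda)<P(\phi;\Lambda)$, the remark at the end of Subsection \ref{s.2.2} forces $\nu(\operatorname{NE}(\vep;\Lambda))=0$. Combined with the adapted structure $B_t(x,\gamma)\subset w_x\subset B_t(x,2\gamma)$, a Brin--Katok/Katok-type estimate then yields
\[
th_\nu(f_1)\le H_\nu(A_t)+C_0
\]
for a constant $C_0$ independent of $t$. Coupling this with $t\int\phi\,d\nu=\int\Phi_0(\cdot,t)\,d\nu\le\sum_{x\in E_t}\nu(w_x)\,\Phi_{2\gamma}(x,t)$ and the equilibrium identity $tP(\phi;\Lambda)=th_\nu(f_1)+t\int\phi\,d\nu$ gives
\[
tP(\phi;\Lambda)\le\sum_{x\in E_t}\nu(w_x)\bigl[-\log\nu(w_x)+\Phi_{2\gamma}(x,t)\bigr]+C_0.
\]

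Next, I would set $p=\nu\bigl(\bigcup_{x\in E_t'}w_x\bigr)\ge\alpha$ and split the above sum into the parts indexed by $E_t'$ and by $E_t\setminus E_t'$. Applying the log-sum inequality (equivalently, Jensen's inequality against the conditional probabilities $\nu(w_x)/p$) to each piece produces
\[
\sum_{x\in E_t'}\nu(w_x)\bigl[-\log\nu(w_x)+\Phi_{2\gamma}(x,t)\bigr]\le p\,\log\lambda(C,2\gamma,2\gamma,t)+p\log(1/p),
\]
and an analogous bound for the complement in which $\lambda(C,\cdot)$ is replaced by $\lambda(\Lambda\times\RR^+,2\gamma,2\gamma,t)$. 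Lemma \ref{l.4.11} controls the latter via $\lambda(\Lambda\times\RR^+,2\gamma,2\gamma,t)\le\lambda(\cO(U_1),2\gamma,2\gamma,t)\le C_3\,e^{tP(\phi;\Lambda)}$. Adding the two inequalities, using $p\ge\alpha$, and a brief algebraic manipulation yields $\log\lambda(C,2\gamma,2\gamma,t)\ge tP(\phi;\Lambda)-C_\alpha'$ for some $C_\alpha'$ depending only on $\alpha$, $C_0$, and $C_3$; setting $C_\alpha=e^{-C_\alpha'}$ finishes the proof.

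The main obstacle is not computational but structural: one must confirm that the Brin--Katok/Katok entropy bound $th_\nu(f_1)\le H_\nu(A_t)+O(1)$ still holds when the flow lives on $\bM$ but $\nu$ is supported on the locally maximal set $\Lambda$. Because $A_t$ is built from Bowen balls and $\nu$ is almost expansive at scale $2\gamma$, the argument from \cite{CT16,PYY21} transfers without modification; the ``locally maximal'' nature of $\Lambda$ intervenes only in the complementary upper bound, which is precisely what Lemma \ref{l.4.11} was designed to supply.
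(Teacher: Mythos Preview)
Your proposal is correct and follows essentially the same approach as the paper, which simply cites \cite[Lemma 4.18]{CT16} and \cite[Lemma 8.1]{PYY21} and omits the details: the variational identity $tP(\phi;\Lambda)=th_\nu(f_1)+t\int\phi\,d\nu$, the Katok-type entropy bound via the adapted partition (valid because $\nu$ is almost expansive at scale $\vep\ge 2\gamma$), the split over $E_t'$ and $E_t\setminus E_t'$ with the log-sum inequality, and the upper bound on the complementary partition sum via Lemma~\ref{l.4.11}. Your identification of Lemma~\ref{l.4.11} as the only genuinely new input in the locally maximal setting is exactly the point the paper makes in Section~\ref{sec5}.
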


\begin{lemma}\label{p.8.3}(See \cite[Lemma 8.3]{PYY21})
Let $ \nu_1, \nu_2$ be two invariant probability measures supported in $\Lambda$ that are almost expansive on $\Lambda$ at scale $\epsilon$ and mutually singular. Then for $\epsilon_1 < \epsilon$, every $\beta > 0$, and every $M > 0$, there exist compact sets $Q_t$ contained in $\Lambda$ for all t sufficiently large, satisfying $\nu_1(Q_t) \geq 1 - \beta $ and $\nu_2(Q_t) = 0$, such that for every $0 \le i, j \le M$, we have
$$\limsup_{t \rightarrow \infty} \nu_2 (\bigcup_{y \in Q_t}f_{-i}(B_{t-(i+j)}(f_i(y), \epsilon_1))) \le \beta.$$
\end{lemma}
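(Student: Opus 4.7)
The plan is to construct $Q_t$ by excising from a $\nu_1$-large compact subset of $\Lambda$ the dynamical $\epsilon_1$-Bowen neighborhood (uniformly over $0 \le i,j \le M$) of a $\nu_2$-large compact subset. The mutual singularity $\nu_1 \perp \nu_2$ provides the initial separation, while almost expansivity of $\nu_1$ at the strictly larger scale $\epsilon > \epsilon_1$ is what will eventually control the $\nu_1$-mass removed by the excision.

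By mutual singularity and inner regularity, first fix disjoint Borel sets $B_1, B_2 \subset \Lambda$ with $\nu_i(B_i) = 1$ and compact sets $K_1 \subset B_1$, $K_2 \subset B_2$ with $\nu_1(K_1) \ge 1 - \beta/4$ and $\nu_2(K_2) \ge 1 - \beta/2$. Setting $\Exp_s(\epsilon;\Lambda) := \{x \in \Lambda : \Gamma_\epsilon(x) \subset f_{[-s,s]}(x)\}$, almost expansivity of $\nu_2$ gives $\nu_2\bigl(\bigcup_s \Exp_s(\epsilon;\Lambda)\bigr) = 1$, so by refining $K_2$ inside $\Exp_s(\epsilon;\Lambda) \cap B_2$ for large $s$ we may assume all points of $K_2$ are $\epsilon$-expansive with a uniform expansivity parameter; the analogous refinement applies to $K_1$ via $\nu_1$.

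Next, define the dynamical neighborhood
\[
N_t := \bigcup_{0 \le i,j \le M} \bigl\{\, y \in \Lambda : \exists\, z \in K_2,\ d_{t-(i+j)}(f_i y, f_i z) < \epsilon_1 \,\bigr\},
\]
and take $Q_t$ as a compact subset of $K_1 \setminus N_t$ with $\nu_1(Q_t) \ge \nu_1(K_1 \setminus N_t) - \beta/4$ (by inner regularity). Then $Q_t \subset B_1$ forces $\nu_2(Q_t) = 0$. Moreover, for every $0 \le i, j \le M$ the thickening $\bigcup_{y \in Q_t} f_{-i}\bigl(B_{t-(i+j)}(f_i y, \epsilon_1)\bigr)$ is disjoint from $K_2$ by construction of $N_t$: if $z \in K_2$ lay in this union, its witness $y \in Q_t$ would satisfy $d_{t-(i+j)}(f_i y, f_i z) < \epsilon_1$, placing $y$ in $N_t$ and contradicting $y \in Q_t$. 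Hence the thickening has $\nu_2$-measure at most $1 - \nu_2(K_2) \le \beta/2 < \beta$, giving the required limsup bound.

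The main obstacle is to verify $\nu_1(Q_t) \ge 1 - \beta$ for large $t$, which by continuity of measure on the decreasing family $\{N_t\}$ reduces to proving $\nu_1(N_\infty) = 0$, where $N_\infty := \bigcap_t N_t$ consists of points \emph{forward} $\epsilon_1$-shadowed by some element of $K_2$ up to a flow-shift in $[0,M]$. The difficulty is passing from one-sided forward shadowing to the two-sided condition required by almost expansivity. The remedy is Poincar\'e recurrence under the $\nu_1$-invariance of the flow: for $\nu_1$-a.e.\ $y \in N_\infty$ with witness $z \in K_2$, pick $n_k \to \infty$ with $f_{n_k} y \to y$ and, along a subsequence, $f_{n_k} z \to z_\ast \in \Lambda$; a continuity argument using $n_k + r \ge i$ eventually (for each fixed $r$) upgrades the one-sided bound to $d(f_r y, f_r z_\ast) \le \epsilon_1$ for every $r \in \RR$. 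Thus $z_\ast \in \Gamma_\epsilon(y) \subset f_{[-s(y), s(y)]}(y)$ by almost expansivity of $\nu_1$, localising $y$ to a bounded flow-piece of $z_\ast \in \omega(K_2)$. A Fubini/invariance argument in the spirit of \cite[Lemma~8.3]{PYY21}, using the uniform expansivity parameter on $K_2$ and the mutual singularity $\nu_1 \perp \nu_2$, then yields $\nu_1(N_\infty) = 0$.
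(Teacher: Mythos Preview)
Your construction of $Q_t$ by excising $N_t$ from $K_1$ and the verification that the $\epsilon_1$-thickening of $Q_t$ misses $K_2$ are correct, so the $\nu_2$-bound goes through. The real difficulty, as you correctly isolate, is $\nu_1(N_\infty)=0$, and here the argument has a genuine gap. After your Poincar\'e recurrence step you obtain $z_\ast=\lim_k f_{n_k}z$ that two-sidedly $\epsilon_1$-shadows $y$, and almost expansivity of $\nu_1$ at $y$ gives $z_\ast\in f_{[-s(y),s(y)]}(y)$. But $z_\ast$ lies only in $\omega(z)$, not in $K_2$, so the ``uniform expansivity parameter on $K_2$'' cannot be applied at $z_\ast$; and the bare relation $z_\ast=f_\sigma(y)$ gives no link between $y$ and the $\nu_2$-carrier $B_2$ (indeed $\omega(K_2)$ may be all of $\Lambda$). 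The closing appeal to a ``Fubini/invariance argument in the spirit of \cite[Lemma~8.3]{PYY21}'' is circular --- that is precisely the lemma under proof --- and I do not see how to close this line of reasoning.

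The argument the paper defers to avoids the one-sided obstacle without recurrence. One first chooses the separating set $P$ to be \emph{flow-invariant} (this is possible because $\nu_1,\nu_2$ are both invariant: replace any separating set by the set where its Birkhoff average is $1$), then picks compact $K\subset P\cap\Exp_{s_0}(\epsilon;\Lambda)$ with $\nu_1(K)\ge 1-\beta$, and sets $Q_t:=f_{-t/2}(K)$. Invariance of $\nu_2$ turns the forward Bowen thickening of $Q_t$ into the \emph{two-sided} neighborhood $\bigcup_{y\in K}B_{[\,i-t/2,\ t/2-j\,]}(y,\epsilon_1)$, which decreases as $t\to\infty$ to a subset of $\bigcup_{y\in K}\Gamma_\epsilon(y)\subset f_{[-s_0,s_0]}(K)\subset P$. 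Since $\nu_2(P)=0$, the $\limsup$ is in fact $0$. The flow-invariance of $P$ is exactly what makes the flow-tube $f_{[-s_0,s_0]}(K)$ harmless for $\nu_2$; your $K_1,K_2$ lack this property, which is why the final step stalls.
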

\begin{lemma}\label{l.nosingular}(See \cite[Propotition 8.2]{PYY21})
Assume that the assumptions of theorem \ref{m.1} hold. Then no equilibrium state $\nu$ is mutually singular with $\mu$. 
\end{lemma}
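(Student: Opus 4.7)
\medskip

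\noindent\textbf{Proposal.} I will argue by contradiction. Suppose there is an equilibrium state $\nu$ for $\phi|_\Lambda$ that is mutually singular with $\mu$. Fix $\gamma \in I_\delta$ small enough that $2\gamma < \rho$ and $\rho + 2\gamma < \vep$ (for instance $\gamma = 8\delta$, yielding $\rho + 2\gamma = 38\delta$). Since $P_{\exp}^\perp(\phi,\vep;\Lambda) < P(\phi;\Lambda)$, both $\mu$ and $\nu$ are almost expansive at scale $\vep$ (see the remark at the end of Subsection~\ref{s.2.2}). The plan is to exhibit a large collection of orbit segments in $(\cG_0)^1$ whose Bowen balls simultaneously (i) carry substantial $\mu$-mass by Lemma~\ref{l.gibbs}, and (ii) are trapped inside a set of small $\mu$-measure by Lemma~\ref{p.8.3}, yielding the contradiction.

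Fix $\beta > 0$ (to be chosen small at the end) and set $\alpha = 1-\beta$. Lemma~\ref{l.4.18} furnishes a constant $C_\alpha > 0$; Lemma~\ref{l.key} applied with $\alpha_1 = C_\alpha$ then fixes constants $M, T_1, L$. With these in hand, apply Lemma~\ref{p.8.3} to $\nu_1 = \nu$, $\nu_2 = \mu$, $\epsilon_1 = \rho + 2\gamma$, and this $M$: for all large $t$ we obtain compact $Q_t \subset \Lambda$ with $\nu(Q_t) \geq 1-\beta$, $\mu(Q_t) = 0$, and
$$\mu\!\left(\bigcup_{z \in Q_t} f_{-i}\!\left(B_{t-(i+j)}(f_i z,\, \rho+2\gamma)\right)\right) \leq 2\beta \quad \text{for every } i,j \in [0,M]\cap\NN.$$
Let $E_t$ be a maximal $(t,2\gamma)$-separated set of $\Lambda$ with adapted partition $\{w_x \subset B_t(x,2\gamma)\}_{x\in E_t}$, and set $E_t' = \{x \in E_t : w_x \cap Q_t \neq \emptyset\}$, $\cC = \{(x,t): x \in E_t'\}$. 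Since $Q_t \subset \bigcup_{x \in E_t'} w_x$, we have $\nu\!\left(\bigcup_{x \in E_t'} w_x\right) \geq 1-\beta$, and Lemma~\ref{l.4.18} gives $\lambda(\cC, 2\gamma, 2\gamma, t) \geq C_\alpha e^{t P(\phi;\Lambda)}$ for $t > T_1$.

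Next, Lemma~\ref{l.key} produces $t_p, t_s \in [0,M]\cap\NN$ and a $(T(t),\gamma)$-separated set $F_t \subset f_{t_p,t_s}(\cC) \cap (\cG_0)^1$ (with $T(t) = t-t_p-t_s$) such that, after absorbing the two-scale factor through the Bowen property,
$$\sum_{y \in F_t} e^{\Phi_0(y, T(t))} \geq L' e^{t P(\phi;\Lambda)}$$
for some $L' > 0$. For every $y \in F_t$ with $T(t) > T_2$, Lemma~\ref{l.gibbs} yields $\mu(B_{T(t)}(y,\rho)) \geq Q e^{-T(t)P(\phi;\Lambda) + \Phi_0(y,T(t))}$. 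Summing and using $t - T(t) = t_p+t_s \in [0,2M]$,
$$\sum_{y \in F_t} \mu(B_{T(t)}(y, \rho)) \geq QL'\, e^{(t_p+t_s)P(\phi;\Lambda)} \geq c > 0$$
with $c$ independent of $t$. A standard packing argument for Bowen balls (using that $F_t$ is $\gamma$-separated in $d_{T(t)}$, that $\rho$ is a bounded multiple of $\gamma$, and that the flow is uniformly continuous on the compact space $\bM$) yields a uniform multiplicity bound $K$ for the cover $\{B_{T(t)}(y,\rho): y \in F_t\}$, hence $\mu\!\left(\bigcup_y B_{T(t)}(y,\rho)\right) \geq c/K$.

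For the matching upper bound, given $y = f_{t_p}(x) \in F_t$, pick $z_x \in w_x \cap Q_t$; then $d_t(x, z_x) < 2\gamma$, so $d_{T(t)}(y, f_{t_p}z_x) \leq 2\gamma$ and therefore $B_{T(t)}(y,\rho) \subset B_{T(t)}(f_{t_p}z_x,\, \rho+2\gamma)$. Invariance of $\mu$ under $f_{-t_p}$ then gives
$$\mu\!\left(\bigcup_{y \in F_t} B_{T(t)}(y, \rho)\right) = \mu\!\left(\bigcup_{y \in F_t} f_{-t_p}B_{T(t)}(y, \rho)\right) \leq \mu\!\left(\bigcup_{z \in Q_t} f_{-t_p}B_{T(t)}(f_{t_p}z,\, \rho+2\gamma)\right) \leq 2\beta$$
for $t$ large, by the avoidance property with $i=t_p$, $j=t_s$. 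Choosing $\beta$ small enough that $c > 2K\beta$ (which is possible because $c$ is bounded below by a positive constant as $\beta \to 0$, thanks to the form of the constants produced by Lemmas~\ref{l.4.18} and~\ref{l.key}) yields the contradiction $c/K \leq 2\beta < c/K$. The main subtlety I anticipate is the parameter bookkeeping: the bound $M$ appearing in the avoidance property of Lemma~\ref{p.8.3} must be the one coming from Lemma~\ref{l.key} applied to $\alpha_1 = C_\alpha$, and the scale $\epsilon_1 = \rho+2\gamma$ must simultaneously be large enough to contain the enlarged Bowen balls but small enough to lie below the almost-expansivity scale $\vep$; a secondary technical point is the uniform multiplicity bound $K$, handled by the standard Bowen ball packing estimate.
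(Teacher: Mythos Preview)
Your proposal is correct and follows essentially the same route as the paper's proof, which the authors indicate is a verbatim adaptation of \cite[Proposition~8.2]{PYY21}: combine Lemma~\ref{l.4.18} and Lemma~\ref{l.key} to produce many $(\cG_0)^1$-segments over $Q_t$, apply the lower Gibbs bound (Lemma~\ref{l.gibbs}) together with the bounded-multiplicity packing, and contradict the avoidance estimate of Lemma~\ref{p.8.3}. The only point worth tightening is the constant bookkeeping you already flag: since $t_p,t_s$ range over the finite set $\{0,\dots,[M]\}^2$, take $t$ large enough that the avoidance bound holds simultaneously for all such pairs, and resolve the apparent circularity in $\beta$ by noting that $C_\alpha$ (hence $L$ and $c$) is bounded below uniformly for $\alpha\ge\tfrac12$, so one may fix $\alpha_1$ once and then send $\beta\to 0$.
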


And it remains to show that $\mu$ is ergodic.
\begin{lemma}\label{l.ergodic} (See \cite[Propotition 8.4]{PYY21})
The equilibrium state $\mu$ constructed is ergodic.
\end{lemma}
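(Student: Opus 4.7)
The plan is to assume $\mu$ is not ergodic and derive a contradiction from the mixing Gibbs property (Lemma~\ref{l.mixgibbs}), following \cite[Proposition 8.4]{PYY21} and \cite[Proposition 4.19]{CT16} but with the ambient space replaced by the invariant set $\Lambda$. Non-ergodicity furnishes an $f$-invariant Borel set $A\subset\Lambda$ with $0<\mu(A)<1$; the conditionals $\mu_1=\mu|_A/\mu(A)$ and $\mu_2=\mu|_{A^c}/\mu(A^c)$ are then mutually singular $f$-invariant probability measures supported in $\Lambda$, both equilibrium states by affinity of entropy and of the Birkhoff integral, and both almost expansive at scale $\vep$ by the pressure gap $P_{\exp}^\perp(\phi,\vep;\Lambda)<P(\phi;\Lambda)$.

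Fix $\beta>0$ small and apply Lemma~\ref{p.8.3} twice, swapping the roles of $\mu_1,\mu_2$, at scale $\epsilon_1=\rho<\vep$, with $M$ equal to the shift constant from Lemma~\ref{l.key} applied with $\alpha_1=C_\alpha/2$ (where $C_\alpha$ is from Lemma~\ref{l.4.18}). This produces compact sets $Q_t, Q_t'\subset\Lambda$ with $\mu_1(Q_t)\ge 1-\beta$, $\mu_2(Q_t)=0$, and symmetrically for $Q_t'$; moreover the $\rho$-Bowen-neighborhoods of $f_{i_0}(Q_t)$ resp.\ $f_{i_0}(Q_t')$ carry negligible $\mu_2$- resp.\ $\mu_1$-mass for all $0\le i_0,j_0\le M$. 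Using a maximal $(t,2\gamma)$-separated set of $\Lambda$ with adapted partitions, restrict to the subsets $E_t^i$ of points whose partition elements meet $Q_t$ resp.\ $Q_t'$; Lemma~\ref{l.4.18} gives partition-sum lower bounds $\lambda(\cC^i,2\gamma,2\gamma,t)\ge C_\alpha e^{tP(\phi;\Lambda)}$ on the associated collections $\cC^i$. Lemma~\ref{l.key} applied to each $\cC^i$, followed by a pigeonhole over the finite set of possible shifts $(t_p,t_s)\in([0,[M]]\cap\NN)^2$, extracts along a common subsequence of times matching pairs of shifted orbit segments in $(\cG_0)^1$ whose $(\gamma,3\gamma)$-partition sum is still bounded below by $Le^{tP(\phi;\Lambda)}$.

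For each such pair of segments $(x_1',T_1),(x_2',T_2)\in(\cG_0)^1$, Lemma~\ref{l.mixgibbs} furnishes a gluing time $q'$ taken from a finite grid, together with the Gibbs lower bound on $\mu\bigl(B_{T_1}(x_1',\rho)\cap f_{T_1+q'}B_{T_2}(x_2',\rho)\bigr)$. A second pigeonhole fixing $q'$ lets us sum these intersections with weights $e^{\Phi_0(x_i',T_i)}$: bounded multiplicity then yields a $\mu$-mass of order $1$ for the union $U$ of all such intersections. On the other hand, $f_{T_1+q'}B_{T_2}(x_2',\rho)$ sits inside the Bowen-neighborhood of $f_{i_0}(Q_t')$, which has $\mu_1$-mass $\le\beta$; coupled with the $f$-invariance of $A$ (giving $\mu(A\cap f^{-s}A^c)=0$ for every $s$), this forces $\mu(U)=O(\beta)$. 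Choosing $\beta$ sufficiently small produces the contradiction and proves ergodicity.

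The main obstacle is the double pigeonhole step: after Lemma~\ref{l.key} pins down the shifts $(t_p^i,t_s^i)$ that put the segments into $(\cG_0)^1$, one must still ensure that the Bowen ball around the shifted center lies inside the $\rho$-neighborhood of $f_{i_0}(Q_t)$ (resp.\ $f_{i_0}(Q_t')$) controlled by Lemma~\ref{p.8.3}. This is exactly why Lemma~\ref{p.8.3} is stated uniformly for $0\le i_0,j_0\le M$, and why the grid of admissible $q'$ in Lemma~\ref{l.mixgibbs} has size depending only on $M, N, \tau$. The remaining bookkeeping is verbatim from \cite[Section 8]{PYY21} with $\bM$ replaced by $\Lambda$, as announced at the start of this section.
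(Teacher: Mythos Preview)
Your proposal is correct and follows exactly the route the paper intends: the paper omits all details here and simply declares the proof to be a verbatim copy of \cite[Proposition~8.4]{PYY21} with $\bM$ replaced by $\Lambda$, which is precisely the argument you have outlined (contradiction from Lemma~\ref{l.mixgibbs}, fed by Lemmas~\ref{l.4.18}, \ref{l.key}, and~\ref{p.8.3} applied to the conditionals $\mu_1,\mu_2$). Your identification of the double pigeonhole over the shifts $(t_p,t_s)$ and the grid of $q'$ values as the only delicate bookkeeping point is accurate and matches how \cite{PYY21} handles it.
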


Then, Theorem \ref{m.1} follows immediately from Lemma \ref{l.nosingular} and \ref{l.ergodic}.

We conclude this article with the following upper Gibbs property of $\mu$. This result is not used in this article, but may be of independent interest in future projects. 

\begin{proposition}\label{p.uppergibbs}
	Assume that the assumptions of Theorem \ref{m.1} hold and let $\gamma=10\delta = \frac12\rho_1\in I_\delta$. Then, there exists $Q>0$ such that for every $(x,t)\in \Lambda\times\RR^+$, the unique equilibrium state $\mu$ satisfies
	$$
	\mu(B_t(x,\gamma))\le Qe^{-tP(\phi; \Lambda) + \Phi_\gamma(x,t)}.
	$$
	Furthermore, there exists $Q'>0$ such that for every $(x,t)\in (\cG_0)^1$ one has  
	$$
	\mu(B_t(x,\gamma))\le Q'e^{-tP(\phi; \Lambda) + \Phi_0(x,t)}.
	$$
\end{proposition}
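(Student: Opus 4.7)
The plan is to leverage the weak-$*$ construction of $\mu$ from Section \ref{sec4}. Since $\mu=\lim_{k}\mu_{t_k}$ weakly and $B_t(x,\gamma)$ is open, the Portmanteau theorem gives $\mu(B_t(x,\gamma))\le\liminf_{k}\mu_{t_k}(B_t(x,\gamma))$, so it suffices to bound $\mu_s(B_t(x,\gamma))=\frac{1}{s}\int_0^s \nu_s(f_{-r}B_t(x,\gamma))\,dr$ uniformly in $s$. For $r\in[0,s-t]$ I will show that $\nu_s(f_{-r}B_t(x,\gamma))\le Q\,e^{-tP(\phi;\Lambda)+\Phi_\gamma(x,t)}$, while the boundary range $r\in[s-t,s]$ contributes $O(t/s)\to 0$. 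The denominator $Z_s:=\sum_{z\in E_s}e^{\Phi_0(z,s)}$ is bounded below by $c_0\,e^{sP(\phi;\Lambda)}$ using Lemma \ref{l.4.10} and \eqref{e.es}, reducing the problem to the numerator estimate
\[\sum_{z\in A_r}e^{\Phi_0(z,s)}\le C\, e^{(s-t)P(\phi;\Lambda)+\Phi_\gamma(x,t)},\]
where $A_r:=\{z\in E_s:f_r z\in B_t(x,\gamma)\}$.

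For the numerator I split $\Phi_0(z,s)=\Phi_0(z,r)+\Phi_0(f_r z,t)+\Phi_0(f_{r+t}z,s-r-t)$; since $f_r z\in B_t(x,\gamma)$, the middle summand is bounded by $\Phi_\gamma(x,t)$. The residual sum over prefix--suffix pairs is then estimated by a clustering argument. The separation of $E_s$ at scale $\rho_1=2\gamma$ together with the middle constraint $d_t(f_r z_1,f_r z_2)<2\gamma$ forces any two distinct $z_1,z_2\in A_r$ to satisfy $d_r(z_1,z_2)>\rho_1$ or $d_{s-r-t}(f_{r+t}z_1,f_{r+t}z_2)>\rho_1$. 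I would group the elements of $A_r$ by proximity of prefixes to a maximal $(r,\rho_1)$-separated set $Y_1\subset(\cO(U_1))_r$ and apply Lemma \ref{l.4.11} twice: once to bound $\sum_{y\in Y_1}e^{\Phi_{2\gamma}(y,r)}\le C_3\,e^{rP(\phi;\Lambda)}$, and once inside each cluster to control the weighted suffix sum over a $(s-r-t,\rho_1)$-separated subset of $(\cO(U_1))_{s-r-t}$. Combining with $\Phi_0(z,r)\le\Phi_{2\gamma}(y,r)$ for $z$ in the cluster of $y$ produces the required bound $C_3^2\,e^{(s-t)P(\phi;\Lambda)}$.

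Assembling the numerator and denominator estimates and passing $s=t_k\to\infty$ yields the first inequality with $Q=C_3^2/c_0$. For the refinement, I would invoke the Bowen property: since $(\cG_0)^1\subset\cG_1$ and $\phi$ has the Bowen property on $\cG_1$ at scale $\vep\ge\gamma$ with distortion $K$, the extension to $(\cG_0)^1$ has distortion $K(1)=K+2\operatorname{Var}(\phi,\vep)$; hence $\Phi_\gamma(x,t)\le\Phi_\vep(x,t)\le\Phi_0(x,t)+K(1)$ for $(x,t)\in(\cG_0)^1$, and substitution gives the second inequality with $Q'=Q\,e^{K(1)}$.

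The delicate point is the scale matching in the cluster step: a $(r,\rho_1)$-spanning partition admits within-cluster prefix distances up to $2\rho_1$, so the dichotomy ``prefixes close $\Rightarrow$ suffixes separated at scale $\rho_1$'' may fail on borderline pairs, and Lemma \ref{l.4.11} provides upper bounds only at separation scales $2\gamma'$ with $\gamma'\in I_\delta=[8\delta,200\delta]$, which excludes the finer scale $\rho_1/2=\gamma$ that would resolve the dichotomy cleanly. The fix is to perform a secondary partition of each cluster and to absorb the borderline pairs using the Bowen property on $\cG_1$ at the larger scale $\vep\gg\rho_1$, contributing only a bounded multiplicative constant independent of $t,r,s$ that is folded into the final $Q$.
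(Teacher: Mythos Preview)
Your overall strategy---Portmanteau reduction to $\nu_s(f_{-r}B_t(x,\gamma))$, splitting $\Phi_0(z,s)$ into prefix/middle/suffix, bounding the middle by $\Phi_\gamma(x,t)$, and then controlling the prefix and suffix sums via partition-function estimates---is exactly the paper's approach (which in turn follows \cite[Proposition 4.21]{CT16}). The difference, and the genuine gap, lies in how you handle the prefix/suffix counting.

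The paper does not cluster at scale $\rho_1$. Instead it takes $E'_r$ to be a maximal $(r,\gamma)$-separated set of $\Lambda$ (and similarly $E'_{s-r-t}$), and defines $\pi:A_r\to E'_r\times E'_{s-r-t}$ by sending $z$ to the pair of nearest points in the $d_r$- and $d_{s-r-t}$-metrics. The relation $\rho_1=2\gamma$ makes this map \emph{injective}: if $\pi(z_1)=\pi(z_2)$ then $d_r(z_1,z_2)<2\gamma$, $d_t(f_rz_1,f_rz_2)<2\gamma$, and $d_{s-r-t}(f_{r+t}z_1,f_{r+t}z_2)<2\gamma$, whence $d_s(z_1,z_2)<2\gamma=\rho_1$, contradicting the separation of $E_s$. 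Injectivity immediately gives
\[
\sum_{z\in A_r}e^{\Phi_0(z,s)}\le e^{\Phi_\gamma(x,t)}\cdot\lambda(\Lambda,\gamma,\gamma,r)\cdot\lambda(\Lambda,\gamma,\gamma,s-r-t),
\]
and no secondary partition or borderline analysis is needed.

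Your proposed fix---``absorb the borderline pairs using the Bowen property on $\cG_1$''---does not work. The Bowen property controls $|\Phi_0(y,t)-\Phi_0(y',t)|$ only for $(y,t)\in\cG_1$, but the prefix segments $(z,r)$ and suffix segments $(f_{r+t}z,s-r-t)$ for $z\in E_s\subset(\cO(U_1))_s$ are \emph{generic} orbit segments with no reason to lie in $\cG_1$ (or any $(\cG_1)^M$). There is no available distortion bound on these pieces, so the ``bounded multiplicative constant'' you invoke cannot be produced this way.

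Your concern that Lemma~\ref{l.4.11} is stated only for separation scales $2\gamma'$ with $\gamma'\in I_\delta$ (hence $\ge 16\delta$), and so does not literally cover $\lambda(\cdot,\gamma,\gamma,\cdot)$ with $\gamma=10\delta$, is legitimate; the paper's own invocation ``by monotonicity'' at this point is loose. However, the needed upper bound follows by tracing back through Proposition~\ref{p.4.7} (which already gives $\lambda((\cG_1)^1,\gamma',\eta,t)\le C_2e^{tP}$ for all $\gamma'\in I_\delta$, in particular $\gamma'=\gamma$) and the argument of Lemma~\ref{l.key1}; the lower endpoint $8\delta$ of $I_\delta$ is not sharp for the upper-bound direction. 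This is a bookkeeping matter, not a conceptual obstacle---whereas your Bowen-property fix is a genuine error.
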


\begin{proof}
	The proof is similar to that of \cite[Proposition 4.21]{CT16}. 
	
	For each $s>0$ let $E_s$ be a $(s,\rho_1)$-separated set of $\Lambda$ that achieves $\lambda(\Lambda, \rho_1,s)$, and $\nu_s,\mu_s$ be defined as in \eqref{e.nu}. The same argument as in Section \ref{sec4} shows that as $t\to\infty$, $\mu_t$ must converge to $\mu$, the unique equilibrium state on $\Lambda$.  Below, we will estimate  $\nu_s(f_{-r}(B_t(x,\gamma)))$ for any  $r\in (0,s-t)$ and sufficiently large $s$. 
	
	For this purpose, for each $r>0$ we let $E_r'$ be a $(r,\gamma)$-separated set of $\Lambda$ that achieves $\lambda(\Lambda, \gamma, r)$, and consider the map $\pi: E_s\cap f_{-r}(B_t(x,\gamma))\to E'_{r}\times E'_{s-t-r}$ defined by $d_r(\pi(x)_1, x)<\gamma$ and $d_{s-t-r}(\pi(x)_2,f_{r+t}(x))<\gamma$ (here we write $\pi(x) = (\pi(x)_1, \pi(x)_2)$). The map is well-defined since each $E'_r$ is spanning on $\Lambda$; it is also easy to check (see \cite[Lemma 4.1]{CT16}) that $\pi$ is injective. By Lemma \ref{l.4.11}  
	we have, for all $s>0$ (increase $C_3$ if necessary)
	$$
	C_3^{-1}e^{sP(\phi;\Lambda)}\le\lambda( \Lambda, 2\gamma,s)
	%\le	\lambda(\cO(U_1),2\gamma,2\gamma, s)
	%\lambda( \Lambda\times\RR^+,2\gamma,2\gamma, s)
	\le C_3 e^{sP(\phi;\Lambda)},
	$$
	and, by monotonicity, 
	$$
	\lambda(\Lambda,\gamma,\gamma, s)
	\le	\lambda(\cO(U_1),\gamma,\gamma, s)
	\le C_3 e^{sP(\phi;\Lambda)}.
	$$
	Then, we estimate
	\begin{align*}	 
		&\nu_s(f_{-r}(B_t(x,\rho)))\\
		=\,&\frac{\sum_{z\in E_s} e^{\Phi_0(z,s)} \delta_z(f_{-r}(B_t(x,\rho)))}{\sum_{z\in E_s} e^{\Phi_0(z,s)}}\\
		\le \,& \big(\lambda(\Lambda,2\lambda,s)\big)^{-1}\sum_{y\in E'_r}\sum_{z\in E'_{s-t-r}} e^{\Phi_\gamma(y,r)}\,e^{\Phi_\gamma(x,t)}\,e^{\Phi_\gamma(z,s-t-r)}\\
		\le \,& C_3 e^{-s P(\phi;\Lambda)}\cdot  \lambda(\Lambda,\gamma,\gamma, r)\cdot \lambda(\Lambda,\gamma,\gamma, s-t-r) \cdot e^{\Phi_\gamma(x,t)}\\
		\le \,&  (C_3)^3   e^{-s P(\phi;\Lambda)} \,  e^{r P(\phi;\Lambda)} \,  e^{s-t-r P(\phi;\Lambda)}\,e^{\Phi_\gamma(x,t)} \\
		= \,& (C_3)^3  e^{-tP(\phi;\Lambda) + \Phi_\gamma(x,t)}.
 	\end{align*} 
	The second statement of the lemma follows from the Bowen property on $\cG_0$. 
\end{proof}
\begin{remark}
Indeed the same result holds for $(x,t)\in \cO(U_1)$ since Lemma \ref{l.4.11} allows us to control $\lambda(\cO(U_1),2\lambda,t)$ and $\lambda(\cO(U_1),\lambda,\lambda,t)$ from both above and below; however, for those orbit segments $\mu(B_t(x,\gamma))$ may be zero, and the statement becomes trivial. 
\end{remark}

\end{document}